\definecolor{red}{rgb}{1,0,0}
\definecolor{blue}{rgb}{.2,.2,.8}
\def\PED{\mathcal{PED}}
\def\POD{\mathcal{POD}}
\newtheorem{theorem}{Theorem}[section]
\newtheorem{corollary}[theorem]{Corollary}
\newtheorem{proposition}{Proposition}
\theoremstyle{definition}
\newtheorem{example}{Example}
\newtheorem{remark}{Remark}
\newcommand{\ds}{\displaystyle}
\begin{document}

\title[]{PED and  POD partitions: combinatorial proofs of recurrence relations}
\author{Cristina Ballantine}\address{Department of Mathematics and Computer Science\\ College of the Holy Cross \\ Worcester, MA 01610, USA \\} 
\email{cballant@holycross.edu} 
\author{Amanda Welch} \address{Department of Mathematics and Computer Science\\ Eastern Illinois University \\ Charleston, IL 61920, USA \\} \email{arwelch@eiu.edu}

\maketitle

\begin{abstract}PED partitions are partitions with even parts distinct while odd parts are unrestricted. Similarly, POD partitions have distinct odd parts while even parts are unrestricted. In \cite{M17} several recurrence relations for the number of PED partitions of $n$ are proved analytically. They are similar to the recurrence relation for the number of partitions of $n$ given by Euler's pentagonal number theorem. We provide combinatorial proofs for all of these theorems and also for the pentagonal number theorem for PED partitions proved analytically in \cite{FGK}, which motivated the theorems in \cite{M17}. Moreover, we prove combinatorially a recurrence for POD partitions given in \cite{BM21}, Beck-type identities involving PED and POD partitions, and several other results about PED and POD partitions. 
\end{abstract}

{\bf Keywords:} partitions, bijections, involutions, recurrences, theta functions.

{\bf MSC 2020:} 11P81, 11P84, 05A17, 05A19

\section{Introduction}

A partition $\lambda$ of $n$ is a non-increasing sequence $\lambda= (\lambda_1, \lambda_2, \ldots, \lambda_\ell)$ of positive integers that add up to $n$. We refer to the integers $\lambda_i$ as the parts of $\lambda$.  As usual, we denote by $p(n)$  the number of  partitions of $n$. Note that $p(x)=0$ if $x$ is not a non-negative integer, and since the empty partition $\emptyset$ is the only partition of $0$, we have that $p(0)=1$. 

In this article we consider partitions in which parts of fixed parity are distinct: PED partitions have no repeated even parts and  POD partitions have no repeated odd parts.  We denote by $ped(n)$, respectively $pod(n)$,  the number of PED, respectively POD, partitions of $n$.  

One of the most celebrated theorems in the theory of partitions is Euler's pentagonal number theorem, which states that, for $n\geq 1$, $$\sum_{j=-\infty}^\infty (-1)^kp(n-k(3k+1)/2)=0.$$

Fink, Guy and Krusemeyer \cite{FGK} proved an analogous result for PED partitions.

\begin{theorem}\label{FGK} For $n\geq 0$, $$\sum_{j=-\infty}^\infty (-1)^{j} ped(n-j(3j+1)/2)=\begin{cases} (-1)^k & \text{ if } n=2k(3k+1) \text{ for some } k \in \mathbb Z,\\ 0&  \text{ otherwise. }  \end{cases}$$
\end{theorem}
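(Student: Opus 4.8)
The plan is to pass to signed generating functions. Write $\mathcal D$ for the set of partitions into distinct parts, let $\#\delta$ denote the number of parts of $\delta$, and recall Euler's pentagonal number theorem in the combinatorial form $\sum_{j\in\Z}(-1)^jq^{j(3j+1)/2}=\prod_{n\ge1}(1-q^n)=\sum_{\delta\in\mathcal D}(-1)^{\#\delta}q^{|\delta|}$, which I will regard as available via Franklin's involution. Applying it to the sum in the theorem,
$$\sum_{j}(-1)^j\,ped\!\left(n-j(3j+1)/2\right)=\sum_{\substack{(\delta,\pi)\\ \delta\in\mathcal D,\ \pi\in\PED\\ |\delta|+|\pi|=n}}(-1)^{\#\delta},$$
so it suffices to evaluate this signed count over pairs consisting of a partition into distinct parts and a PED partition of the complementary size.

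The structural fact driving everything is that $\prod_{n\ge1}(1-q^n)\cdot\sum_{m\ge0}ped(m)q^m=\prod_{n\ge1}(1-q^{4n})$, since $\sum_{m}ped(m)q^m=\prod_{n\ge1}(1+q^{2n})\big/\prod_{n\ge1}(1-q^{2n-1})=\prod_{n\ge1}(1-q^{4n})\big/\prod_{n\ge1}(1-q^n)$. To realise this combinatorially I would split both partitions into odd and even parts: a pair $(\delta,\pi)$ corresponds to a quadruple $(\delta_{\mathrm o},\delta_{\mathrm e},\pi_{\mathrm o},\pi_{\mathrm e})$, where $\delta_{\mathrm o},\delta_{\mathrm e}$ are the (distinct) odd and even parts of $\delta$, $\pi_{\mathrm e}$ is a partition into distinct even parts, and $\pi_{\mathrm o}$ is an arbitrary partition into odd parts, with sign $(-1)^{\#\delta_{\mathrm o}+\#\delta_{\mathrm e}}$. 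The odd data cancels: I would use the sign-reversing involution that locates the largest odd integer $m$ occurring in $\delta_{\mathrm o}$ or $\pi_{\mathrm o}$ and, if $m$ is a part of $\delta_{\mathrm o}$, deletes it from $\delta_{\mathrm o}$ and inserts a copy of $m$ into $\pi_{\mathrm o}$, and otherwise transfers one copy of $m$ from $\pi_{\mathrm o}$ back into $\delta_{\mathrm o}$; this changes $\#\delta_{\mathrm o}$ by one and has $(\emptyset,\emptyset)$ as its only fixed point. Hence the signed count is unchanged upon restricting to $\delta_{\mathrm o}=\pi_{\mathrm o}=\emptyset$, i.e. to pairs $(\delta_{\mathrm e},\pi_{\mathrm e})$ of partitions into distinct even parts weighted by $(-1)^{\#\delta_{\mathrm e}}$.

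For the even data I would run the standard involution behind $\prod_{n}(1-q^{2n})(1+q^{2n})=\prod_{n}(1-q^{4n})$: view a pair $(\delta_{\mathrm e},\pi_{\mathrm e})$ as recording, for each even $m$, whether $m$ lies in $\delta_{\mathrm e}$, in $\pi_{\mathrm e}$, in both, or in neither, and swap the membership at the smallest even $m$ lying in exactly one of them. The fixed points are the pairs with $\delta_{\mathrm e}=\pi_{\mathrm e}$, whose total contribution is $\sum_{\alpha}(-1)^{\#\alpha}q^{2|\alpha|}$ over partitions $\alpha$ into distinct even parts, namely $\prod_{k\ge1}(1-q^{4k})$. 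Finally, applying Franklin's involution dilated by the factor $4$ gives $\prod_{k\ge1}(1-q^{4k})=\sum_{k\in\Z}(-1)^kq^{2k(3k+1)}$; reading off the coefficient of $q^n$, and using that $k\mapsto 2k(3k+1)$ is injective on $\Z$ (so at most one $k$ contributes), yields exactly $(-1)^k$ when $n=2k(3k+1)$ and $0$ otherwise.

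I expect the routine-but-delicate part to be the bookkeeping around the two reductions: verifying that the odd-part map is genuinely an involution — in particular that multiplicities in $\pi_{\mathrm o}$ are handled consistently, that $m$ is never lost, and that applying the map twice returns the original pair — and checking that the sign $(-1)^{\#\delta}$ tracks correctly through the odd/even split and the dilation by $4$, so that the exceptional value comes out as $(-1)^k$ with no spurious extra sign. One could alternatively package all three steps as a single sign-reversing involution on the pairs $(\delta,\pi)$, whose fixed points are the pairs $(\delta,\emptyset)$ with $\delta$ equal to $4$ times a Franklin staircase; I would nonetheless present it in the factored form above, since each of the three pieces is transparent on its own.
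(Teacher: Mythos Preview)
Your argument is correct and aligns closely with the paper's second proof: both interpret the sum via Franklin's involution as a signed count over pairs $(\delta,\pi)\in\mathcal Q\times\PED$, split each partition into its odd and even constituents, cancel via sign-reversing involutions until only pairs $(\alpha,\alpha)$ of equal distinct-even-part partitions survive, and finish with Franklin dilated by~$4$.

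The genuine difference lies in how the odd data is eliminated. You dispose of $(\delta_{\mathrm o},\pi_{\mathrm o})$ in a single stroke with the ``transfer the largest odd part'' involution, which is exactly the combinatorial content of $(q;q^2)_\infty\cdot (q;q^2)_\infty^{-1}=1$. The paper instead first applies its $\zeta$-map to the even pair $(\lambda^e,\mu^e)$, then sends the unrestricted odd partition $\lambda^o$ through Glaisher to a distinct partition $\nu$, forms $\eta=\lambda^e\cup\mu^o$, and applies $\zeta$ twice more (once to $(\nu,\eta)$ and once to $(\lambda^e,2\eta)$) before reaching $4\eta$. Your route is shorter and more transparent: three involutions instead of four, no detour through Glaisher, and the odd and even cancellations are completely decoupled. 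What the paper's longer chain buys is that every step is an instance of a single named transformation ($\zeta$ or $\varphi_{Gl}$) catalogued in its Section~\ref{inv}; your ``largest odd part'' map, while entirely standard, is introduced ad hoc. Your even-part involution (swap membership at the smallest even $m$ lying in exactly one side) is also a slight variant of the paper's $\zeta$, with the same fixed-point set $\delta_{\mathrm e}=\pi_{\mathrm e}$ but a simpler case analysis.
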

Inspired by this theorem, Merca \cite{M17} proved analogous recurrences for $ped(n)$  involving triangular and square numbers. 
Let $T_k=k(k+1)/2$ denote the $k^{th}$ triangular number. 
\begin{theorem} \label{Merca 1.1} For $n\geq 0$, $$\sum_{j\geq 0} (-1)^{\lceil j/2\rceil} ped(n-T_j)=\begin{cases} 1 &  \text{ if } n=2T_k \text{ for some } k \geq 0,\\ 0 &  \text{ otherwise. }  \end{cases}$$
\end{theorem}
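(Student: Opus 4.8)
The plan is to read the left side of Theorem~\ref{Merca 1.1} as a signed count of combinatorial objects and then cancel all but one of them by a sign-reversing involution. Set $\mathcal{A}_n=\{(\mu,j):j\ge 0,\ \mu\text{ a PED partition of }n-T_j\}$ and weight $(\mu,j)$ by $(-1)^{\lceil j/2\rceil}$; then $\sum_{(\mu,j)\in\mathcal{A}_n}(-1)^{\lceil j/2\rceil}$ is exactly the left side, and it is convenient to think of the index $j$ as recording the staircase $\delta_j=(j,j-1,\dots,2,1)$, so an element of $\mathcal{A}_n$ is a PED partition paired with a staircase whose sizes add to $n$. The goal is to show the weighted cardinality of $\mathcal{A}_n$ is $1$ when $n=2T_k$ and $0$ otherwise.

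First I would pin down the sign. Since $\lceil j/2\rceil$ is constant on the blocks $\{2m-1,2m\}$ (with $\{0\}$ its own block), the weight is reversed exactly when $j$ crosses a block boundary, i.e.\ between $j=2m$ and $j=2m+1$. The simplest sign-reversing move is therefore $\delta_{2m}\leftrightarrow\delta_{2m+1}$, which changes $T_j$ by the odd number $2m+1$; I would realize it on the PED side by adjoining or deleting a part equal to $2m+1$. Passing from $j=2m+1$ to $j=2m$ one deletes the longest row of the staircase and creates a new part $2m+1$ in $\mu$, which is always legal because odd parts of a PED partition are unrestricted; the reverse move is available exactly when $\mu$ already contains a part $2m+1$. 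This is already a sign-reversing involution on $\mathcal{A}_n$, and its fixed points are precisely the pairs $(\mu,2m)$ with $m\ge 0$ and $\mu$ a PED partition of $n-m(2m+1)$ containing no part equal to $2m+1$.

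On this residual set I would iterate with a Franklin-type move. Now the sign is reversed by changing the staircase two rows at a time, $\delta_{2m}\leftrightarrow\delta_{2m-2}$, so $T_j$ changes by $4m-1$; the compensating transfer of boxes between $\mu$ and the staircase must be arranged so that the result is again a PED partition avoiding its forbidden part, and since exactly one of the two staircase rows in play is even, the even-parts-distinct condition is what decides which transfers are legal. One compares the smallest part of $\mu$ with the current staircase length to choose the direction of the move. The claim is that after this reduction the only object that survives, and only when $n=2T_k$, is the pair $\big((2k,2k-2,\dots,4,2),\,\delta_0\big)$ consisting of the ``doubled staircase'' PED partition of $2T_k=k(k+1)$ together with the empty staircase; it has weight $(-1)^{\lceil 0/2\rceil}=1$, which is the right side, while for $n$ not of the form $2T_k$ nothing is left.

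The real work, and the main obstacle, is the design and verification of the second-stage moves: one must reconcile sign-reversal with the PED condition \emph{and} with the constraint ``$\mu$ has no forbidden part'' inherited from the first stage, check that the shrink and grow operations are genuine inverses, and prove that the sole exceptional configuration is the doubled staircase. Partitions $\mu$ that are themselves (near-)staircases, and the smallest values of $n$, will need separate bookkeeping. (A less self-contained alternative is to invoke the classical bijection $ped(n)=\#\{\text{partitions of }n\text{ with no part divisible by }4\}$ and, after the substitution $q\mapsto q^2$, deduce the identity from Gauss's theorem $\prod_{j\ge1}\frac{1-q^{2j}}{1-q^{2j-1}}=\sum_{k\ge0}q^{T_k}$; but the direct involution keeps the argument inside the world of PED partitions, matching the spirit of the paper.)
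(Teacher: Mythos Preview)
Your first stage is clean and correct: pairing $(\mu,2m+1)\leftrightarrow(\mu\cup(2m+1),2m)$ is a genuine sign-reversing involution on $\mathcal A_n$, with fixed set $\{(\mu,2m):\mu\in\PED(n-m(2m+1)),\ 2m+1\notin\mu\}$.

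The gap is exactly where you say it is, and it is not a minor one. You have not constructed the second-stage involution; you only assert that a Franklin-type move ``comparing the smallest part of $\mu$ with the current staircase length'' should work. The most natural candidate already fails: passing from $2m$ to $2m-2$ deletes the two top staircase rows of lengths $2m$ and $2m-1$, and if you reinsert them as parts of $\mu$ you land on a partition containing $2m-1$, which is precisely the part forbidden at level $2m-2$. So the move exits the fixed set of stage one. Any repair must simultaneously respect the PED constraint on even parts, avoid the (shifting) forbidden odd part, and be an honest involution whose unique survivor is the doubled staircase; you have not indicated how to do this, and small examples (e.g.\ $n=6$, where after stage one the survivors are $((6),0),((4,2),0),((3,3),0),((2,1),2),((1,1,1),2)$) show that the required pairing is not the obvious one. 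As written, this is a plan rather than a proof.

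For comparison, the paper does not attempt a direct involution on the pairs $(\mu,\delta_j)$ at all. It first uses Andrews' involution $\varphi_{A_1}$ to replace the index $j$ by a second PED partition, so the sum becomes a signed count over pairs $(\lambda,\mu)$ of PED partitions with sign $(-1)^{\ell(\lambda)}$. Splitting each partition into its even and odd parts, it then builds a new, fairly intricate involution $\psi$ on pairs of odd partitions (Proposition~\ref{prop}) that reduces everything to triples of partitions into distinct even parts. A final combination of $\varphi_{BM}$, $\varphi_F$, and $\varphi_{BZ}$ (Proposition~\ref{L1} and its Corollary) evaluates that triple count. The key new ingredient is $\psi$, whose construction takes a page of case analysis; nothing in your outline corresponds to it, and the difficulty you postponed to stage two is, in effect, what $\psi$ handles.
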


\begin{theorem} \label{Merca 1.2} For $n\geq 0$, $$\sum_{j=-\infty}^\infty (-1)^{j} ped(n-2j^2)=\begin{cases} 1&  \text{ if } n=T_k \text{ for some } k \geq 0,\\ 0&  \text{ otherwise. }  \end{cases}$$
\end{theorem}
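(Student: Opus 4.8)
The plan is to turn the recurrence into a generating function identity, reduce it by elementary manipulations to a single Jacobi-triple-product-type identity, and prove that identity by a Franklin-type involution.

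\smallskip
\noindent\textbf{Step 1 (Reformulation).} Since $\sum_{m\ge0}ped(m)q^m=\prod_{n\ge1}\dfrac{1+q^{2n}}{1-q^{2n-1}}$, the asserted recurrence is equivalent to
\[
\Bigl(\sum_{j=-\infty}^{\infty}(-1)^jq^{2j^2}\Bigr)\Bigl(\sum_{m\ge0}ped(m)q^m\Bigr)=\sum_{k\ge0}q^{T_k},
\]
i.e., for every $n$ the signed count of pairs $(j,\l)$ with $j\in\Z$, $\l$ a PED partition, and $2j^2+|\l|=n$, each weighted by $(-1)^j$, equals $1$ if $n=T_k$ for some $k\ge0$ and $0$ otherwise. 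So it suffices to produce a sign-reversing involution on these pairs whose fixed points contribute $1$ exactly when $n$ is triangular; the expected fixed configuration is $(0,\delta_k)$ with $\delta_k=(k,k-1,\dots,2,1)$, which is a PED partition of $T_k$.

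\smallskip
\noindent\textbf{Step 2 (Reduction to a clean identity).} By Theorem~\ref{FGK} and Euler's pentagonal number theorem, $ped(m)$ equals the number of partitions of $m$ with no part divisible by $4$, so replace $ped(n-2j^2)$ accordingly. Using $2j^2=\sum_{i=1}^{|j|}(4i-2)$, a pair $(j,\m)$ with $\m$ such a partition of $n-2j^2$ glues into the single partition $\rho=\m\cup\{2,6,\dots,4|j|-2\}$ of $n$ with no part divisible by $4$; summing $(-1)^j$ over all ways to recover $(j,\m)$ from a given $\rho$ collapses the signed count to $\sum_{\rho}(-1)^{\tau(\rho)}$, over partitions $\rho$ of $n$ with no part divisible by $4$, where $\tau(\rho)$ is the largest $t$ with $2,6,\dots,4t-2$ all parts of $\rho$. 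Separating the odd parts of $\rho$ from the parts $\equiv2\pmod4$ and halving the latter, this signed generating function factors as $F(q^2)/(q;q^2)_\infty$, where $F(q)=\sum_{\beta \text{ odd parts}}(-1)^{\tau(\beta)}q^{|\beta|}$ and $\tau(\beta)$ is the largest $t$ with $1,3,\dots,2t-1$ all parts of $\beta$. Since $\sum_{k\ge0}q^{T_k}=(q^2;q^2)_\infty/(q;q^2)_\infty$ (Gauss) and $(q;q)_\infty=\sum_m(-1)^mq^{m(3m+1)/2}$ (Euler), we are reduced to
\[
F(q)=\sum_{\beta \text{ odd parts}}(-1)^{\tau(\beta)}q^{|\beta|}=(q;q)_\infty .
\]

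\smallskip
\noindent\textbf{Step 3 (The involution and the obstacle).} A one-line computation (using $\sum_{i=1}^{t}(2i-1)=t^2$) rewrites $F(q)$ as $\bigl(\sum_{j=-\infty}^{\infty}(-1)^jq^{j^2}\bigr)/(q;q^2)_\infty$, so the last display is the $z=-1$ case of the Jacobi triple product in disguise. I would prove it by a Franklin-type involution on partitions into odd parts that toggles $\tau$: compare the ``odd staircase'' $1,3,\dots,2\tau-1$ sitting inside $\beta$ against the remaining parts of $\beta$ and either absorb a layer of the staircase into a larger part or peel a part off to lengthen the staircase, so that the only partitions left fixed are, for each generalized pentagonal number $m(3m+1)/2$, a single one carrying sign $(-1)^m$. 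Equivalently, Steps~1--3 can be folded into one sign-reversing involution on the original pairs $(j,\l)$ with fixed points $(0,\delta_k)$. The main obstacle, in either formulation, is to make this involution simultaneously well defined, involutive, and sign-reversing: the staircase being attached can collide with parts already present in $\beta$ (equivalently, with the even parts of $\l$), so the precise statistic used, the boundary cases, and the bookkeeping of the two-to-one correspondence $j\leftrightarrow-j$ must be coordinated so that exactly the intended configurations remain unmatched. As Gauss's identity and Euler's pentagonal number theorem are classical and have standard Franklin-type proofs, this single involution is the only genuinely new ingredient.
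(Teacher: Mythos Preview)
Your reduction is correct and yields a complete combinatorial proof, but by a genuinely different route than the paper's. The paper interprets $\sum_j(-1)^jq^{2j^2}$ through Andrews' involution $\varphi_{A_2}$ on overpartitions (with all parts doubled), writes each pair $(\overline\lambda,\mu)\in\overline{\mathcal P}\,\PED(n)$ as a quadruple $(\alpha,\beta,\mu^e,\mu^o)$, applies the zeta involution to $(\alpha,\mu^e)$ and Gupta's involution $\varphi_G$ to $\beta\cup\mu^o$, and arrives at $(-1)^nQ_{2,e-o}(n)$, which the Kolitsch--Kolitsch involution $\varphi_K$ then evaluates. You instead pass to $4$-regular partitions, use a gluing/telescoping trick (attach the staircase $2,6,\dots,4|j|-2$) to collapse the $j$-sum into a single signed count weighted by $(-1)^{\tau}$, factor off the odd parts, and reduce to $F(q)=(q;q)_\infty$. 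Your argument is more series-manipulation in spirit, though each step is bijective; the paper's is a chain of named sign-reversing involutions drawn from the literature. Your route uses less machinery, while the paper's makes visible how the theorem sits among classical bijections.

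Two small points. First, a slip: $ped(m)=b_4(m)$ is not a consequence of Theorem~\ref{FGK} plus the pentagonal number theorem; it is the direct Glaisher-type bijection recalled in Section~\ref{prelim}. Second, your worry about Step~3 is misplaced. The ``one-line computation'' that rewrites $F(q)$ as $\bigl(\sum_j(-1)^jq^{j^2}\bigr)/(q;q^2)_\infty$ is exactly the same combinatorial gluing you used in Step~2 (attach or detach the odd staircase $1,3,\dots,2|j|-1$ and telescope), so it is already combinatorial; and Gauss's second theta identity and Euler's identity both have standard involutive proofs (Andrews' $\varphi_{A_2}$ and Glaisher's $\varphi_{Gl}$, both invoked in the paper). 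Hence no new involution is required: your proof is complete once you cite those. The direct Franklin-type map on odd partitions that you sketch would be an elegant unification, but it is an optional refinement, not a gap you need to fill.
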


In the  articles cited, Theorems \ref{FGK}, \ref{Merca 1.1}, and \ref{Merca 1.2} are proved using generating functions. We give combinatorial proofs of these theorems. 

In \cite{BM21}, analogous results were derived analytically for $pod(n)$. We denote by $Q_k(n)$ the number of distinct partitions of $n$ into parts $\not \equiv k \bmod 4$. 

\begin{theorem}\label{pod-rec}
	For $n\geq 0$ the following hold.
	\begin{enumerate}
		\item[(i)] $\displaystyle{Q_0(n) = pod(n)+2 \sum_{k=1}^{\infty} (-1)^{k}\, pod\big(n-4k^2\big)}$
		\item[(ii)] $\displaystyle{Q_2(n) = \sum_{k=0}^{\infty} (-1)^{k(k+1)/2}\, pod\big(n-k(k+1)\big)}$
	\end{enumerate}
\end{theorem}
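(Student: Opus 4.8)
The plan is to prove both parts by the same two-step strategy: first use classical Euler- and Glaisher-type bijections to rewrite $Q_0(n)$, $Q_2(n)$ and $pod(n)$ so that a common ``partition into distinct odd parts'' factor splits off from the two sides of each identity, and then recognize that what survives is exactly the combinatorial content already isolated in the proofs of Theorems~\ref{Merca 1.2} and~\ref{Merca 1.1}. Write $\mathrm{do}(a)$ for the number of partitions of $a$ into distinct odd parts and $p_o(c)$ for the number of partitions of $c$ into odd parts. Since the odd parts of a POD partition are distinct while its even parts are twice an arbitrary partition, $pod(n)=\sum_{a+2b=n}\mathrm{do}(a)\,p(b)$, and this is the only structural fact about POD partitions I will use.

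For part~(i): a distinct partition with no part divisible by $4$ is the union of its odd parts (automatically distinct) and its even parts, which are distinct and $\equiv 2\pmod{4}$, hence twice a partition into distinct odd parts; therefore $Q_0(n)=\sum_{a+2b=n}\mathrm{do}(a)\,\mathrm{do}(b)$. Plugging the formula for $pod$ into $\sum_{k\in\Z}(-1)^kpod(n-4k^2)$ and collecting by the distinct-odd component $a$ — using that $4k^2$ is even, so only $a\equiv n\pmod{2}$ contribute — the recurrence $Q_0(n)=\sum_{k\in\Z}(-1)^kpod(n-4k^2)$ becomes equivalent to the single statement
\begin{equation*}
\mathrm{do}(m)=\sum_{k\in\Z}(-1)^k p(m-2k^2)\qquad(m\ge 0).
\end{equation*}
In series form this reads $(q;q)_\infty(-q;q^2)_\infty=\sum_{k\in\Z}(-1)^kq^{2k^2}$, a classical identity of Gauss (with $q\mapsto q^2$, after an elementary rearrangement of products); it is exactly the content that a combinatorial proof of Theorem~\ref{Merca 1.2} must supply, and I would quote that proof — a sign-reversing involution whose fixed points sit at the values $2k^2$ — essentially verbatim.

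For part~(ii): observe first that $(-1)^{k(k+1)/2}=(-1)^{T_k}$, which realizes the same sign pattern $+,-,-,+,+,-,-,\dots$ as $(-1)^{\lceil j/2\rceil}$ in Theorem~\ref{Merca 1.1}. A distinct partition with no part $\equiv 2\pmod{4}$ is the union of its odd parts (distinct) and its parts $\equiv 0\pmod{4}$, the latter being $4$ times a partition into distinct parts, which by Euler's theorem is equinumerous with $4$ times a partition into odd parts; so $Q_2(n)=\sum_{a+4c=n}\mathrm{do}(a)\,p_o(c)$. Plugging the formula for $pod$ into $\sum_{k\ge0}(-1)^{T_k}pod(n-k(k+1))$ (note $k(k+1)=2T_k$), splitting off the distinct-odd component, and writing $n-a=2s$, the recurrence reduces to
\begin{equation*}
\sum_{k\ge0}(-1)^{T_k}p(s-T_k)=\#\{\text{partitions of }s\text{ into parts }\equiv 2\pmod{4}\}\qquad(s\ge 0),
\end{equation*}
the right side being $p_o(s/2)$ for $s$ even and $0$ for $s$ odd. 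In series form this is $\sum_{k\ge0}(-1)^{T_k}q^{T_k}=(q;q)_\infty/(q^2;q^4)_\infty=(q;q^2)_\infty(q^4;q^4)_\infty$, which is the theta identity underlying Theorem~\ref{Merca 1.1}; again I would invoke the Franklin-type involution supplied there, whose fixed points occur precisely at the triangular numbers.

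I expect the genuine difficulty to be entirely inside those two ``core'' involutions; the rest is bookkeeping with textbook bijections, with two points needing a little care. First, the distinct-odd factor really does split off only because the offsets $4k^2$ and $k(k+1)$ are even, so that the parity of $n-a$ is preserved; this is what makes the grouping by $a$ legitimate. Second, in part~(ii) the reduced identity must \emph{vanish} when $s$ is odd: the classes of $a$ with $a\equiv n\pmod{2}$ but $a\not\equiv n\pmod{4}$ feed the left-hand side of the recurrence while contributing nothing to $Q_2$, so for such $s$ the identity expresses a cancellation rather than a bijection — this is the only place where the ``$Q_2$'' shape (as opposed to a single corrective term, as in Theorem~\ref{Merca 1.1}) enters in an essential way. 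If Theorems~\ref{Merca 1.1} and~\ref{Merca 1.2} — and hence their underlying involutions — are already in hand, then both parts of Theorem~\ref{pod-rec} follow by assembling the bijections above.
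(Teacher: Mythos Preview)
Your factoring strategy is sound: splitting off the distinct-odd component from both $pod(n)$ and $Q_k(n)$ is legitimate, the residue-class bookkeeping is correct, and both reduced identities --- $\mathrm{do}(m)=\sum_{k\in\Z}(-1)^k p(m-2k^2)$ for part~(i) and $\sum_{k\ge 0}(-1)^{T_k}p(s-T_k)=[2\mid s]\,p_o(s/2)$ for part~(ii) --- are true. The gap is in how you propose to discharge them. Neither is ``exactly the content'' of Theorem~\ref{Merca 1.2} or Theorem~\ref{Merca 1.1}: those theorems sum $ped(\,\cdot\,)$, not $p(\,\cdot\,)$, over the same theta offsets, and their combinatorial proofs run through $\PED$ (via $\varphi_{A_2}$ on even overpartitions, respectively $\varphi_{A_1}$ together with the involution $\psi$ of Proposition~\ref{prop}), so they do not hand you an involution on $\bigcup_k \mathcal P(m-2k^2)$ with fixed set $\mathcal Q_{odd}(m)$, nor one on $\bigcup_k \mathcal P(s-T_k)$ with fixed set the $2$-mod-$4$ partitions of $s$. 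You have conflated ``same offset sequence'' with ``same identity.'' What your first reduced identity actually requires is $\varphi_{A_2}$ (with parts doubled) together with Gupta's involution $\varphi_G$ and a Franklin/Bressoud--Zeilberger cleanup to absorb the convolution with $1/(q;q)_\infty$; the second requires $\varphi_K$ --- this is precisely \eqref{JTP1} --- plus a further involution cancelling the odd parts coming from the unrestricted factor. So the plan is salvageable, but you must supply those arguments rather than point to Theorems~\ref{Merca 1.1} and~\ref{Merca 1.2}.

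For comparison: the paper treats only part~(i) here (part~(ii) is quoted from~\cite{BM21}) and does \emph{not} factor off the distinct odd parts. It interprets the full right-hand side directly, using $\varphi_{A_2}$ with parts multiplied by~$4$ to turn $\sum_k(-1)^k pod(n-4k^2)$ into a signed count of pairs (overpartition with parts $\equiv 0\bmod 4$, POD partition), then decomposes the POD partition by residue modulo~$4$, applies Gupta's involution to the interaction between the nonoverlined overpartition parts and the $2$-mod-$4$ parts, and finishes with $\varphi_F$ followed by $\varphi_{BZ}$ on the residual $0$-mod-$4$ pieces. The distinct odd parts of the POD partition ride along passively throughout --- which is morally your factorization, just never made explicit. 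Your route isolates the single identity to be proved and is arguably tidier once the reduced identity is actually established; the paper's route keeps all the pieces together but ends up invoking essentially the same toolkit from Section~\ref{inv}.
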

In \cite{BM21}, the authors also proved Theorem \ref{pod-rec}\,(ii) combinatorially. Here we give a combinatorial proof of Theorem \ref{pod-rec}\,(i).

We also give combinatorial proofs of the remaining theorems in \cite{M17} listed below. We prove the following theorem relating $ped(n)$ and $pod(n)$. 

\begin{theorem}\label{Merca 4.1} For any non-negative integer $n$, we have $$ped(n)=\sum_{k=0}^\infty pod(n-2T_k).$$
 \end{theorem}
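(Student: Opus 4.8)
The plan is to construct a bijection between PED partitions of $n$ and pairs $(\mu, T_k)$ where $\mu$ is a POD partition of $n - 2T_k$ for some $k \geq 0$. The natural starting point is the observation that $2T_k = k(k+1) = k + (k+1) + (k+2) + \cdots$? No — rather, $2T_k = 2 + 4 + 6 + \cdots + 2k$ is the sum of the first $k$ even numbers, i.e.\ the staircase of even parts $(2k, 2k-2, \ldots, 4, 2)$. So the right-hand side counts pairs consisting of a POD partition $\mu$ together with a ``distinct even staircase'' $(2, 4, \ldots, 2k)$, and the total being partitioned is $n$. The goal is to merge these into a single PED partition of $n$.

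First I would set up the map from right to left: given a POD partition $\mu$ of $n - 2T_k$, I want to absorb the even staircase $2, 4, \ldots, 2k$ into $\mu$ to produce a PED partition $\lambda$ of $n$. The even parts of $\mu$ may repeat, so the merging must resolve all such repetitions using the ``room'' created by the new even parts; this is reminiscent of Glaisher-style or Sylvester-style bijections between distinct-parts and restricted-multiplicity partitions, but applied only to the even-part sub-multiset. Concretely, I would take the sub-partition of even parts of $\mu$, halve every part to get an ordinary partition $\nu$, append the staircase $1, 2, \ldots, k$ (the halved staircase), and then apply the classical bijection that turns a partition with a marked staircase appendage into a partition with distinct parts — then double everything back. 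The odd parts of $\mu$ (already distinct) are carried along unchanged, and since doubling produces even parts, nothing interferes with the odd parts. Running this in reverse: given a PED partition $\lambda$ of $n$, isolate its even parts, halve them, strip off the appropriate staircase $1, 2, \ldots, k$ via the inverse classical bijection (which records $k$), double what remains, and reattach the odd parts of $\lambda$ to obtain $\mu$ and the index $k$.

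The key technical point — and the main obstacle — is to pin down exactly which classical bijection makes the staircase bookkeeping work, and to verify that $k$ is uniquely determined by $\lambda$ so that the correspondence is genuinely a bijection rather than a many-to-one or partially defined map. The cleanest candidate is the bijection underlying the identity $\prod_{i\ge1}(1+q^i) = \sum_{k\ge0} q^{T_k}/\prod_{i=1}^k(1-q^i)$ (distinct parts $\leftrightarrow$ a staircase $T_k$ plus a partition into parts $\le k$), due to the ``Sylvester triangle'' or ``Franklin'' type argument; here one reads the Durfee-like staircase off the distinct-parts side. Translated through the halving/doubling, a PED partition's even parts $\le \lambda$ correspond to a staircase of even numbers plus an arbitrary multiset of even parts each $\le 2k$ — and this arbitrary multiset with bound $2k$ is precisely the even part of a POD partition together with... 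I would need to check that the bound $\le 2k$ is automatically consistent, i.e.\ that no further constraint leaks in; this compatibility check is where the real work lies. Once the even-part bijection is established and shown to be a well-defined involution-free bijection with the staircase index read off unambiguously, tensoring with the identity on odd parts finishes the proof, and summing over $k$ gives the stated identity.
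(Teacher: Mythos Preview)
Your proposal has a genuine gap: you carry the odd parts between PED and POD unchanged, but this is precisely where the two families differ. In a PED partition the odd parts are \emph{unrestricted}, while in a POD partition the odd parts must be \emph{distinct}. So your reverse map (``reattach the odd parts of $\lambda$ to obtain $\mu$'') fails whenever the PED partition $\lambda$ has a repeated odd part, since the result is not POD; equivalently, your forward map hits only PED partitions with distinct odd parts and is not surjective. There is a second, related problem on the even side. The Sylvester-type bijection you invoke realizes $\prod_{i\ge1}(1+q^i)=\sum_{k\ge0}q^{T_k}/(q;q)_k$: a distinct partition corresponds to a staircase $T_k$ plus a partition into parts \emph{at most $k$}. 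After doubling, this forces the even parts of the POD partition to be bounded by $2k$, but POD allows unrestricted even parts. You flagged this yourself (``check that the bound $\le 2k$ is automatically consistent''); it is not.

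The paper resolves both issues at once. From $\lambda=(\lambda^e,\lambda^o)\in\PED(n)$ it takes $\eta^o$ to consist of those odd parts appearing with odd multiplicity (one copy each), so $\eta^o$ is a legitimate distinct-odd piece for POD. The leftover odd parts of $\lambda^o$ then all have even multiplicity; halving multiplicities and applying Glaisher turns them into a \emph{second} partition $\mu^e$ with distinct even parts. Now one has a pair of distinct partitions $(\lambda^e_{/2},\mu^e_{/2})$, and the paper applies the Ballantine--Merca bijection $\varphi_{BM}^{-1}$, which sends a pair in $\mathcal{QQ}(m)$ to an unrestricted partition of $m-T_k$ for some $k$; doubling gives unrestricted even parts $\eta^e$, and $(\eta^e,\eta^o)\in\POD(n-2T_k)$. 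The missing idea in your sketch is that the repeated odd parts of the PED partition must be absorbed into the even-part machinery, and doing so naturally produces a \emph{pair} of distinct partitions --- calling for $\varphi_{BM}$ (the combinatorial witness to $(-q;q)_\infty^2(q;q)_\infty=\sum_{k\ge0}q^{T_k}$) rather than the single-factor Sylvester bijection.
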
 
We then prove another recurrence for $pod(n)$ involving triangular numbers. 

\begin{theorem}\label{Merca 4.3}  For $n\geq 0$, $$\sum_{j=0}^\infty (-1)^{T_j}pod(n-T_j)=\begin{cases}1 & \text{ if } n=0,\\ 0 & \text{ if } n>0. \end{cases}$$ 
\end{theorem}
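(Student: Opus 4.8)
The plan is to deduce Theorem~\ref{Merca 4.3} by composing the two combinatorial statements already established in this excerpt: Theorem~\ref{Merca 1.1}, the triangular-number recurrence for $ped$, and Theorem~\ref{Merca 4.1}, the bridge $ped(n)=\sum_{k\ge 0}pod(n-2T_k)$. The first ingredient is the trivial parity observation that $\lceil j/2\rceil\equiv T_j\pmod 2$ for all $j\ge 0$ — both parities run through the period-four pattern $0,1,1,0$ — so $(-1)^{\lceil j/2\rceil}=(-1)^{T_j}$ and the left side of Theorem~\ref{Merca 4.3} equals $f(n):=\sum_{j\ge 0}(-1)^{\lceil j/2\rceil}pod(n-T_j)$.

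Substituting Theorem~\ref{Merca 4.1} into Theorem~\ref{Merca 1.1} and interchanging the two finite sums gives, for every $n\ge 0$,
$$\sum_{k\ge 0}f(n-2T_k)=\sum_{k\ge 0}\sum_{j\ge 0}(-1)^{\lceil j/2\rceil}pod\big((n-2T_k)-T_j\big)=\begin{cases}1&\text{if }n=2T_m\text{ for some }m\ge 0,\\ 0&\text{otherwise.}\end{cases}$$
Since $\sum_{k\ge 0}[\,n-2T_k=0\,]$ also equals $1$ exactly when $n$ is twice a triangular number, the sequences $(f(n))_{n\ge 0}$ and $([\,n=0\,])_{n\ge 0}$ have the same image under $h\mapsto\big(n\mapsto\sum_{k\ge 0}h(n-2T_k)\big)$. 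As $T_0=0$, this transform is unitriangular — $\sum_{k\ge 0}h(n-2T_k)=h(n)+\sum_{k\ge 1}h(n-2T_k)$ with every term of the last sum at an argument smaller than $n$ — hence injective, and a one-line induction on $n$ yields $f(n)=[\,n=0\,]$, which is Theorem~\ref{Merca 4.3}.

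The only points requiring care in this argument are the standard conventions ($pod(x)=0$ for $x<0$, $pod(0)=1$) and the finiteness justifying the sum interchange, both routine; thus there is no real obstacle once Theorems~\ref{Merca 1.1} and~\ref{Merca 4.1} are in hand. Should a self-contained bijective proof be preferred, I would instead consider the signed set of pairs $(\delta_j,\mu)$ with $\delta_j=(j,j-1,\dots,1)$ the staircase of size $T_j$, $\mu$ a POD partition of $n-T_j$, and sign $(-1)^{T_j}$; here the consecutive index blocks $\{2i,2i+1\}$ carry opposite signs and differ in size by the odd number $2i+1$. One then wants a Franklin-type sign-reversing involution that moves the bottom step of $\delta_j$ into $\mu$ as fresh parts, or in reverse absorbs a suitable block of $\mu$ back into the staircase, changing $j$ by one. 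The main obstacle in this approach is designing the case split so that the POD condition (distinct odd parts, unrestricted even parts) survives in both directions and the sole fixed point is $(\delta_0,\emptyset)$, which arises precisely when $n=0$ — at the level of generating functions this is the cancellation $\big(\sum_{j\ge 0}(-1)^{T_j}q^{T_j}\big)\cdot\frac{(-q;q^2)_\infty}{(q^2;q^2)_\infty}=1$, which one can split as the distinct-odd-part identity $\big(\sum_{j\ge 0}(-1)^{T_j}q^{T_j}\big)(-q;q^2)_\infty=(q^2;q^2)_\infty$ followed by peeling off the even parts and invoking Euler's pentagonal number theorem.
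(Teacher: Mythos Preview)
Your argument is correct but follows a genuinely different route from the paper. You deduce Theorem~\ref{Merca 4.3} algebraically from Theorems~\ref{Merca 1.1} and~\ref{Merca 4.1}: after the parity observation $(-1)^{\lceil j/2\rceil}=(-1)^{T_j}$, substituting the bridge $ped(m)=\sum_{k\ge 0}pod(m-2T_k)$ into the $ped$-recurrence and swapping sums shows that $f(n)=\sum_j(-1)^{T_j}pod(n-T_j)$ and $[\,n=0\,]$ have the same image under the unitriangular transform $h\mapsto\sum_{k\ge 0}h(n-2T_k)$, whence they coincide by induction. This is slick and short, and since the paper proves Theorems~\ref{Merca 1.1} and~\ref{Merca 4.1} combinatorially, your deduction inherits a combinatorial pedigree. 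The paper, by contrast, gives a direct sign-reversing-involution proof: it uses $\varphi_K$ to interpret the signed sum as a difference over pairs $(\eta,\lambda)\in\mathcal Q_2\times\mathcal{POD}$, applies the zeta transformation to the odd components $(\eta^o,\lambda^o)$, repackages the result as overpartitions of $n/2$ counted by parity of the number of overlined parts, and finishes with Andrews' involution $\varphi_{A_2}$. Your approach buys brevity and avoids building any new involution; the paper's approach buys a self-contained bijective proof that does not route through Theorems~\ref{Merca 1.1} and~\ref{Merca 4.1} and makes the cancellation visible at the level of partitions. Your closing sketch of a Franklin-type involution is in the right spirit but, as you note, the case analysis needed to preserve the POD condition is not worked out; the paper sidesteps exactly that difficulty by passing to overpartitions.
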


We give a combinatorial proof of Merca's theorem relating $ped(n)$ and $\overline p(n)$, the number of overpartitions of $n$. Overpartitions were introduced in \cite{CL}. They are partitions in which the first occurrence of a part may be overlined. 

 \begin{theorem} \label{Merca 5.1} For $n\geq 0$,  $$ped(n)=\sum_{k\geq 0}\overline p\left(\frac{n}{2}-\frac{T_k}{2}\right),$$ where $\overline p(x)=0$ if $x$ is not an integer. 
\end{theorem}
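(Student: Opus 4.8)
The plan is to exhibit a bijection between PED partitions of $n$ and the objects counted by the right‑hand side, obtained by composing a few classical weight‑preserving bijections; the only substantial ingredient will be a combinatorial version of Gauss's identity.

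First I unfold the right‑hand side. Since $\overline p(x)=0$ unless $x$ is a nonnegative integer, the term $\overline p\big(\tfrac{n}{2}-\tfrac{T_k}{2}\big)$ counts pairs $(\delta_k,\overline\mu)$ in which $\delta_k=(k,k-1,\dots,2,1)$ is the staircase of size $T_k$ and $\overline\mu$ is an overpartition of $\tfrac12(n-T_k)$; thus $\sum_{k\ge 0}\overline p\big(\tfrac{n-T_k}{2}\big)$ enumerates pairs $(\delta_k,\overline\mu)$ with $T_k+2|\overline\mu|=n$. I then unfold $\overline\mu$ into the pair consisting of its overlined parts, which form a partition $A'$ into distinct parts, and its non‑overlined parts, which form an arbitrary partition $R$; so the right‑hand side enumerates triples $(\delta_k,A',R)$ with $A'$ into distinct parts, $R$ arbitrary, and $T_k+2|A'|+2|R|=n$.

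Now I process a PED partition $\lambda$ of $n$. Split $\lambda=A\sqcup O$, where $A$ is the (distinct) even part of $\lambda$ and $O$ its (unrestricted) odd part. Halve the parts of $A$ to obtain a partition $A'$ into distinct parts with $|A|=2|A'|$, and apply Glaisher's bijection (the combinatorial form of $\prod(1-q^{2j-1})^{-1}=\prod(1+q^j)$) to $O$ to obtain a partition $B$ into distinct parts with $|B|=|O|$. This turns $\lambda$ into a pair $(A',B)$ of partitions into distinct parts with $2|A'|+|B|=n$. The key move is to apply to $B$ a combinatorial form of Gauss's identity $\sum_{k\ge 0}q^{T_k}=(q^2;q^2)_\infty/(q;q^2)_\infty$: combined with Euler's theorem this reads $(-q;q)_\infty=(q^2;q^2)_\infty^{-1}\sum_{k\ge 0}q^{T_k}$, i.e.\ partitions of $m$ into distinct parts are in bijection with pairs $(\delta_k,V)$ where $V$ is a partition into even parts and $T_k+|V|=m$. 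Applying this to $B$ and then halving the parts of $V$ to get an arbitrary partition $R$ produces the triple $(\delta_k,A',R)$ with $T_k+2|A'|+2|R|=n$, which is exactly of the form enumerated above; re‑overlining $A'$ alongside the plain parts $R$ recovers the pair $(k,\overline\mu)$, with $|\overline\mu|=\tfrac12(n-T_k)$. Each step is a bijection, so $ped(n)=\sum_{k\ge 0}\overline p\big(\tfrac{n-T_k}{2}\big)$; the chain can be checked directly for small $n$ (e.g.\ $n=2,3$).

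The main obstacle is the Gauss step, i.e.\ an explicit bijection between partitions of $m$ into distinct parts and pairs $(\delta_k,V)$ with $V$ a partition into even parts and $|V|=m-T_k$. I intend to supply one via a Franklin‑type sign‑reversing involution proving $(-q;q)_\infty(q^2;q^2)_\infty=\sum_{k\ge 0}q^{T_k}$: writing $(q^2;q^2)_\infty=\sum_C(-1)^{\ell(C)}q^{2|C|}$ over partitions $C$ into distinct parts, the left side enumerates pairs $(D,C)$ of partitions into distinct parts carrying sign $(-1)^{\ell(C)}$ and weight $|D|+2|C|$, and one cancels all but the staircases by an involution that transfers a block between the largest part of $D$ and twice the smallest part of $C$, in the spirit of Franklin's proof of the pentagonal number theorem; the surviving fixed points are precisely the staircases $\delta_k$, each with sign $+1$. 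Alternatively one may simply quote a known combinatorial proof of Gauss's identity. The remaining steps — the PED splitting, the two halvings, Glaisher's bijection, and the pairing‑into‑overpartition — are all routine and weight‑preserving.
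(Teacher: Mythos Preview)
Your approach is essentially identical to the paper's: split $\lambda\in\PED(n)$ into its even and odd parts, halve the even parts to produce the overlined parts of the target overpartition, apply Glaisher to the odd parts to get a distinct partition $B$, and then invoke a bijection realizing Watson's identity $|\mathcal Q(m)|=\sum_{k\ge 0}p\big((m-T_k)/2\big)$ to turn $B$ into a pair (staircase, ordinary partition), the latter becoming the non-overlined parts.

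The one gap is in your proposed construction of that last bijection. A Franklin-type sign-reversing involution on pairs $(D,C)$ of distinct partitions with sign $(-1)^{\ell(C)}$ and weight $|D|+2|C|$, whose fixed points are the staircases, would indeed prove the generating-function identity $(-q;q)_\infty(q^2;q^2)_\infty=\sum_{k\ge 0}q^{T_k}$ combinatorially. But it does not by itself furnish a bijection between $\mathcal Q(m)$ and $\bigcup_k\{(\delta_k,V):V\text{ has even parts},\ |V|=m-T_k\}$; those two sets are not the positive and negative parts of the signed set you are involuting. Extracting an honest bijection from such a cancellation would require an additional mechanism (e.g.\ a Garsia--Milne style argument), which you do not supply, and your sketch of the involution itself (``transfer a block between the largest part of $D$ and twice the smallest part of $C$'') is too vague to verify. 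Your stated fallback---quoting a known combinatorial proof---is exactly what the paper does: it cites the Fu--Tang bijection $\varphi_{FT}$ for Watson's identity and sets the non-overlined part to $\beta=\varphi_{FT}(\varphi_{Gl}(\lambda^o))$.
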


Let $o(n)$ denote the number of partitions of $n$ into odd parts and denote by $o_{e-o}(n)$  the number of partitions of $n$ with an even number of odd parts  minus  the number of partitions of $n$ with an odd number of odd parts (in each case there are no even parts). We give a combinatorial theorem of the following result stated as  Corollary 5.4 in \cite{M17}. 

\begin{theorem}\label{C 5.4} For $n\geq 0$, $$\sum_{j=0}^\infty o_{e-o}(n-T_j)=\begin{cases} (-1)^k  & \text{ if } n=2k(3k+1) \text{ for some }  k \in \mathbb Z\\ 0& \text{ otherwise. }\end{cases}$$
\end{theorem}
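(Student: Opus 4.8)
The plan is to reduce $\sum_{j\ge 0}o_{e-o}(n-T_j)$ to a signed enumeration of pairs built from a staircase and a partition, and then to cancel almost all of that sum by a Franklin-type sign-reversing involution, the surviving contribution being $(-1)^k$ exactly when $n=2k(3k+1)$.

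A convenient first step is the parity remark that a partition of $m$ into odd parts has a number of parts congruent to $m$ modulo $2$, so that $o_{e-o}(m)=(-1)^m o(m)$, where $o(m)$ is the number of partitions of $m$ into odd parts, equivalently (by Euler's theorem) into distinct parts. Hence $\sum_{j\ge 0}o_{e-o}(n-T_j)=(-1)^n\sum(-1)^{T_j}$, the sum ranging over all pairs $(\delta_j,\nu)$ in which $\delta_j=(j,j-1,\dots,1)$ is a staircase (empty if $j=0$), $\nu$ is a partition into distinct parts, and $T_j+|\nu|=n$; the sign now depends only on $j$, through $(-1)^{T_j}=(-1)^{\lceil j/2\rceil}$. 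The target value $(-1)^k$ is attained exactly when $n=2k(3k+1)=4\cdot k(3k+1)/2$, i.e. when $n$ is four times a generalized pentagonal number; since $\prod_{i\ge1}(1-q^{4i})=\sum_{k\in\mathbb Z}(-1)^k q^{2k(3k+1)}$, this matches the fixed-point pattern of Franklin's involution on partitions into distinct parts that are multiples of $4$, so the involution I want should, after all cancellation, leave precisely those configurations.

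The involution itself I would build in the spirit of Franklin, by moving cells between the staircase and a staircase-like segment at the top of $\nu$ along whichever move flips the parity of $T_j$: roughly, when $j$ is even, absorb a part $j+1$ of $\nu$ onto $\delta_j$ to form $\delta_{j+1}$, and when $j$ is odd, release the top row of $\delta_j$ (of odd size $j$) as a new part of $\nu$, using the usual size comparison between the staircase and that segment of $\nu$ to decide which move is legal and to break collisions with parts already present in $\nu$. The delicate point — and the step I expect to be the main obstacle — is arranging this so that it is simultaneously well defined, sign-reversing, and fixed-point-exact: because an even-size staircase cell cannot be released as a single odd part, a naive ``remove/adjoin one part'' move reverses the sign in only half the cases, and after the first round of cancellation one is typically left with fixed configurations that still cancel in pairs, so the comparison rule has to be chosen (possibly with a second, auxiliary involution on what remains) so that the genuine fixed points are exactly the $4\times$(generalized pentagonal) configurations, each carrying sign $(-1)^k$, and nothing else survives.

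An alternative that avoids constructing the involution outright is to prove combinatorially the identity $\displaystyle\sum_{j\ge 0}o_{e-o}(n-T_j)=\sum_{j\in\mathbb Z}(-1)^{j}\,ped\!\left(n-\tfrac{j(3j+1)}{2}\right)$ — both sides being controlled by $\prod_{i\ge1}(1-q^{4i})$ — and then invoke Theorem \ref{FGK}. That reduces the problem to a sign-preserving bijection between pairs (staircase, partition into odd parts) weighted by $(-1)^{\ell}$ and pairs (generalized pentagon, PED partition) weighted by $(-1)^{j}$, which I would attack by splitting each staircase $\delta_j$ into its odd rows (which can be absorbed into the odd partition) and its even rows, invoking Euler's odd/distinct correspondence, and using the combinatorial proof of the pentagonal number theorem already given earlier in the paper on the PED side.
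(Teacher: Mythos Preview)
Your opening move --- the parity observation $o_{e-o}(m)=(-1)^m o(m)$ --- matches the paper, and you correctly identify the target as the coefficients of $(q^4;q^4)_\infty$. But neither of your two routes is actually carried out. In the first, you explicitly flag the construction of the sign-reversing involution on pairs $(\delta_j,\nu)$ as ``the main obstacle'' and leave it unresolved: the naive rule (absorb a part $j+1$ from $\nu$ when $j$ is even, release the top row $j$ of $\delta_j$ when $j$ is odd) breaks down whenever $j+1\notin\nu$ or $j\in\nu$, and you do not construct the promised auxiliary involution on the residue. In the second route you are reducing to Corollary~\ref{Merca 5.3}, but the sketch (split the staircase into odd and even rows, invoke Euler, etc.) is a list of ingredients rather than an argument; in the paper that corollary is derived \emph{from} Theorem~\ref{C 5.4}, not the other way around.

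The paper avoids building a new involution by composing ones already established. After your parity step, it does not pass to distinct partitions via Euler; instead it uses Andrews' involution $\varphi_{A_1}$ to replace the weighted triangular-number sum by a signed count over $\mathcal{OPED}(n)=\{(\lambda,\mu)\vdash n:\lambda\in\PED,\ \mu\in\mathcal O\}$ with weight $(-1)^{\ell(\lambda)}$. Writing $\lambda=(\lambda^e,\lambda^o)$, the crucial step is to apply the involution $\psi$ of Proposition~\ref{prop} to the pair of \emph{odd} partitions $(\lambda^o,\mu)$; this collapses the odd data to a single $\eta\in\mathcal Q_{even}$, leaving a signed count over pairs $(\lambda^e,\eta)\in\mathcal Q_{even}\times\mathcal Q_{even}$. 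The $\zeta$ involution of Proposition~\ref{zeta} then reduces to a single $\nu\in 4\mathcal Q$ weighted by $(-1)^{\ell(\nu)}$, and Franklin's $\varphi_F$ on $\nu_{/4}$ finishes. The cancellation you were trying to engineer in one stroke is thus factored into two previously proved involutions, $\psi$ (handling two odd partitions) and $\zeta$ (handling two distinct-even partitions); this modularity is exactly what your direct approach is missing.
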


Combining Theorem \ref{C 5.4} with Theorem \ref{FGK} we obtain a combinatorial proof of  Merca's Theorem 5.3. 

\begin{corollary} \label{Merca 5.3} For $n\geq 0$, $$\sum_{j=-\infty}^\infty (-1)^j ped(n-j(3j+1)/2)=\sum_{j=0}^\infty o_{e-o}(n-T_j).$$
\end{corollary}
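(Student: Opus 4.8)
The plan is to observe that both sides of the claimed identity have already been evaluated in closed form, so the corollary is immediate by transitivity. By Theorem \ref{FGK}, the left-hand side $\sum_{j=-\infty}^\infty (-1)^j ped(n-j(3j+1)/2)$ equals $(-1)^k$ when $n=2k(3k+1)$ for some $k\in\mathbb Z$ and $0$ otherwise. By Theorem \ref{C 5.4}, the right-hand side $\sum_{j=0}^\infty o_{e-o}(n-T_j)$ equals exactly the same piecewise expression. Since both quantities agree, for every $n\ge 0$, with this common function of $n$, they are equal, which is precisely the assertion of Corollary \ref{Merca 5.3}.

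First I would place the two evaluations side by side so that the match is transparent, noting in particular that when a representation $n=2k(3k+1)$ exists it is unique (the values $2k(3k+1)$ for $k\in\mathbb Z$ are pairwise distinct, with $k=0$ giving $n=0$ and the well-defined sign $(-1)^0=1$), so that the right-hand sides of both theorems are unambiguous and the attached signs $(-1)^k$ literally coincide. Then the corollary follows at once. The only thing to verify is bookkeeping: that the exceptional set $\{2k(3k+1):k\in\mathbb Z\}$ and the associated signs are written identically in Theorems \ref{FGK} and \ref{C 5.4}; once this is checked, nothing further is required, and I do not expect any genuine obstacle here.

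If a more self-contained combinatorial argument were desired, one could instead look for a direct sign-reversing involution between the signed PED partitions indexed by generalized pentagonal numbers (the left-hand side) and the partitions into odd parts obtained after deleting a nonnegative triangular part, weighted by the parity of the number of odd parts (the right-hand side), bypassing the case analysis. In that approach the obstacle would shift to constructing and verifying the involution. However, since Theorems \ref{FGK} and \ref{C 5.4} are themselves given combinatorial proofs earlier in the paper, composing those two bijections already constitutes a combinatorial proof of the corollary, and I would simply invoke them.
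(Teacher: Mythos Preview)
Your proposal is correct and is essentially the same approach the paper takes: the paper simply states that Corollary~\ref{Merca 5.3} follows by combining Theorem~\ref{FGK} with Theorem~\ref{C 5.4}, since both sides evaluate to the same piecewise function of $n$. Your remark that the composition of the combinatorial proofs of these two theorems yields a combinatorial proof of the corollary matches the paper's intent exactly.
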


In the last section of the article we prove Beck-type theorems for $ped(n)$ and $pod(n)$. As we will explain in the next section, $ped(n)$ is equal to the number of partitions of $n$ into parts $\not \equiv 0 \bmod 4$, and $pod(n)$ is equal to the number of partitions of $n$ into parts $\not \equiv 2 \bmod 4$. The Beck-type identities give combinatorial interpretations for the excess in the number of parts in all partitions of $n$ into parts $\not \equiv 0 \bmod 4$ over the number of parts in all PED partitions of $n$; and also for the excess in the number of parts in all partitions of $n$ into parts $\not \equiv 2 \bmod 4$ over the number of parts in all POD partitions of $n$. These are analogous to the interpretations given in \cite{A-Beck} for the excess in the number of parts in all partitions of $n$ into odd parts over the number of parts in all distinct partitions of $n$.

\section{Preliminaries and Notation } \label{prelim}

If $\lambda$ is a partition of $n$, we write $\lambda\vdash n$. We refer to $n$ as the size of $\lambda$ and also write $|\lambda|=n$. When we work with vectors of partitions, we write $(\lambda, \mu, \cdots)\vdash n$ to mean $|\lambda|+|\mu|+\cdots =n$.
 The length of $\lambda$ is the number of parts of $\lambda$, denoted by $\ell(\lambda)$.   For convenience, we abuse notation and use $\lambda$ to denote either the multiset of its parts or the non-increasing sequence of parts.  We write $a\in \lambda$ to mean the positive integer $a$ is a part of $\lambda$. The number of times $a>0$ appears in $\lambda$ is denoted by $m(a)$ and is called the multiplicity of $a$. 
We use the convention that $\lambda_k=0$ for all $k>\ell(\lambda)$. 

The {Ferrers diagram} of a partition $\lambda=(\lambda_1, \lambda_2, \ldots, \lambda_\ell)$ is an array of left justified boxes such that the $i$th row from the top contains $\lambda_i$ boxes. We abuse notation and use $\lambda$ to mean a partition or its Ferrers diagram.
\begin{example} The Ferrers diagram of $\lambda=(4, 3, 3, 2)$  is shown below.  \medskip

 \begin{center}\small{\ydiagram[*(white)]
{4,3,3,2}}\end{center}
 \end{example}
 
 We define the following operations on partitions. If   $\lambda$ and $\mu$ are partitions, by $\lambda\cup \mu$ and $\lambda\setminus \mu$ we mean the obvious operations on the multisets of parts of $\lambda$ and $\mu$. Moreover $\lambda\setminus \mu$ is only defined if $\mu \subseteq \lambda$ as multisets. For a positive integer $k$, we denote by $k\lambda$ the partition whose parts are the parts of $\lambda$ multiplied by $k$. If all parts of $\lambda$ are divisible by $k$, we denote by $\lambda_{/k}$ the partition whose parts are the parts of $\lambda$ divided by $k$.
 
 We often write a partition $\lambda$ as $(\lambda^e, \lambda^o)$, where $\lambda^e$, respectively $\lambda^o$, consists of the even, respectively odd, parts of $\lambda$. 

We denote by calligraphy style capital letters the set of partitions enumerated by the function denoted by the same letters. For example, $\mathcal P(n)$ is the set of unrestricted partitions of $n$, which are enumerated by $p(n)$, and $\PED (n)$ is the set of partitions of $n$ in which even parts are distinct and thus $|\PED(n)|=ped(n)$. If the variable $n$ is omitted, we mean the set of all partitions with the obvious restrictions. For example, $\PED=\ds \cup_{n\geq 0}\PED(n)$.   When necessary, we will clarify the notation. 

For a  thorough and detailed introduction to the theory of integer partitions, we refer the reader to \cite{A98}.

We first derive a few basic facts about PED and POD partitions. Most are well-known and can be found for example in \cite{A09, M17, BM21} and the references therein. 

We use the Pochhammer symbol notation
\begin{align*}
	& (a;q)_n = \begin{cases}
	1 & \text{for $n=0$,}\\
	(1-a)(1-aq)\cdots(1-aq^{n-1}) &\text{for $n>0$;}
	\end{cases}\\
	& (a;q)_\infty = \lim_{n\to\infty} (a;q)_n.
	\end{align*}
	Throughout, we assume $|q|<1$ so that all series converge absolutely. 
	
	The generating function for $ped(n)$ is given by $$\sum_{n\geq 0} ped(n)q^n=\frac{(-q^2;q^2)_\infty}{(q;q^2)_\infty}.$$
This can be rewritten as  \begin{equation}\label{4-reg}\sum_{n\geq 0} ped(n)q^n=\frac{(-q^2;q^2)_\infty}{(q;q^2)_\infty}\cdot \frac{(q^2;q^2)_\infty}{(q^2;q^2)_\infty}=\frac{(q^4;q^4)_\infty}{(q;q)_\infty}=\sum_{n\geq 0} b_4(n)q^n,\end{equation} 
where $b_4(n)$ is the number of $4$-regular partitions of $n$, i.e., partitions with no parts congruent to $0\bmod 4$.

One can also see  combinatorially that $ped(n)=b_4(n)$.  Start with  $\lambda\in \PED(n)$  and, similar to Glaisher's transformation,  split each  part of the form $2^kc$ with $k\geq 2$ and $c$ odd into $2^{k-1}$ parts equal to $2c$ to obtain    a partition $\mu \in \mathcal B_4(n)$.  Thus, each part of $\lambda$ congruent to $0\bmod 4$ is split into equal parts congruent to $2\bmod 4$. The inverse transformation is defined by starting with $\mu\in \mathcal B_4(n)$ and repeatedly merging equal even parts until all even parts are distinct to obtain  a partition $\lambda\in \PED(n)$.   

Similarly, the generating function for $pod(n)$ is given by$$\sum_{n\geq 0} pod(n)q^n=\frac{(-q;q^2)_\infty}{(q^2;q^2)_\infty},$$ which can be rewritten as  
\begin{equation}\label{pod-mod 4}\sum_{n\geq 0} pod(n)q^n=\frac{(-q;q^2)_\infty}{(q^2;q^2)_\infty}\cdot \frac{(q;q^2)_\infty}{(q;q^2)_\infty}=\frac{(q^2;q^4)_\infty}{(q;q)_\infty}=\sum_{n\geq 0} p_2(n)q^n,\end{equation} where 
 $p_2(n)$ is the number of partitions of $n$ with no parts congruent to $2\bmod 4$. 
 
To show combinatorially that $pod(n)=p_2(n)$, start with $\lambda\in \POD(n)$ and split each part congruent to $2\bmod 4$ into two equal odd parts to obtain a partition in $\mathcal P_2(n)$. For the inverse, start with a partition $\mu \in \mathcal P_2(n)$. For each repeated odd part $c$ of $\mu$ with multiplicity $m(c)\geq 2$, merge $\lfloor m(c)/2\rfloor$ pairs of parts equal to $c$ to obtain $\lfloor m(c)/2\rfloor$ parts equal to $2c$. The obtained partition is in $\POD(n)$.

Next, we observe that $ped(n)- pod(n)\geq 0$ for all $n\geq 0$ and we give a combinatorial interpretation of the difference.  
\begin{proposition}  Let $n\geq 0$. Then $b_4(n)-p_2(n)$ equals the number of $4$-regular partitions of $n$ such that the number of parts equal to $1$ is less than twice the number of even parts. 
\end{proposition}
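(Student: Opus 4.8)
The plan is to construct an injection $\Phi\colon \mathcal P_2(n)\hookrightarrow \mathcal B_4(n)$ whose image is exactly the set of $4$-regular partitions of $n$ in which the number of parts equal to $1$ is \emph{at least} twice the number of even parts. Since $b_4(n)=|\mathcal B_4(n)|$ and $p_2(n)=|\mathcal P_2(n)|$, the existence of such a $\Phi$ gives both the inequality $b_4(n)-p_2(n)\ge 0$ and the stated interpretation of the difference, namely that $b_4(n)-p_2(n)$ counts the $4$-regular partitions of $n$ in which the number of $1$'s is strictly less than twice the number of even parts.

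Here is the map. Given $\mu\in\mathcal P_2(n)$, write $\mu=\mu^o\cup\mu^{(4)}$, where $\mu^o$ collects the odd parts of $\mu$ and $\mu^{(4)}$ the parts divisible by $4$, and put $\mu^{(4)}=4\rho$ for the uniquely determined (unrestricted) partition $\rho=(\rho_1,\dots,\rho_r)$. Let $\Phi(\mu)$ be obtained from $\mu$ by keeping $\mu^o$, replacing each part $4\rho_i$ by $4\rho_i-2$, and adjoining $2r$ new parts equal to $1$. Because $4\rho_i-2\equiv 2\pmod 4$ and $\mu^o$ has only odd parts, $\Phi(\mu)$ has no part divisible by $4$, so $\Phi(\mu)\in\mathcal B_4(n)$; its even parts are precisely $4\rho_1-2,\dots,4\rho_r-2$, so it has exactly $r$ of them, while its number of parts equal to $1$ is at least $2r$. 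Thus $\Phi(\mu)$ lies in the target set. The size is preserved because $|\mu^{(4)}|=\sum_i 4\rho_i$ while the replacement contributes $\sum_i(4\rho_i-2)+2r=\sum_i 4\rho_i$.

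To invert $\Phi$, take $\nu\in\mathcal B_4(n)$ with (number of $1$'s) $\ge 2\cdot(\text{number of even parts})$, and let its even parts be $2c_1,\dots,2c_r$ with each $c_i$ odd. The assignment $c\mapsto (c+1)/2$ is an order-preserving bijection from the odd positive integers onto the positive integers, so $\rho:=\bigl((c_1+1)/2,\dots,(c_r+1)/2\bigr)$ is a partition and $2c_i=4\rho_i-2$. Delete the $r$ even parts of $\nu$ together with $2r$ of its parts equal to $1$ (permitted by the hypothesis on $\nu$); the remaining partition $\mu^o$ has only odd parts, and we set $\mu:=\mu^o\cup 4\rho$. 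One then checks that $\mu\in\mathcal P_2(n)$ and that this assignment is a two-sided inverse of $\Phi$, whence $\Phi$ is a bijection from $\mathcal P_2(n)$ onto the asserted image set, proving the proposition.

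I expect the only real subtlety to be the bookkeeping that makes the construction reversible. One must resist splitting $4\rho_i$ as $2\rho_i+2\rho_i$, since $2\rho_i$ need not be $\equiv 2\pmod 4$; passing instead through the odd value $2\rho_i-1$ and the even value $4\rho_i-2$ fixes this. Likewise, the number of $1$'s adjoined must be \emph{exactly} twice the number of even parts created: this is what lets the inverse decide, from $\nu$ alone, how many of its $1$'s should be returned to $\mu^{(4)}$ and how many belong to $\mu^o$, and it forces membership in the target set to be governed by the inequality (number of $1$'s) $\ge 2\cdot(\text{number of even parts})$ rather than by some less natural condition. Once these two conventions are in place, checking that $\Phi$ and its inverse are mutually inverse is routine, relying only on the elementary bijection $c\leftrightarrow(c+1)/2$ and the identity $\sum_i(4\rho_i-2)+2r=\sum_i 4\rho_i$.
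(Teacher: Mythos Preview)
Your proof is correct and is essentially the same as the paper's: both construct the bijection from $\mathcal P_2(n)$ to $\{\lambda\in\mathcal B_4(n):m(1)\ge 2\ell(\lambda^e)\}$ by subtracting $2$ from each part divisible by $4$ and adjoining twice as many $1$'s, with the evident inverse. Your exposition is more detailed (introducing the auxiliary partition $\rho$ and the correspondence $c\leftrightarrow(c+1)/2$), but the underlying map is identical.
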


\begin{proof} We define a bijection from $\mathcal P_2(n)$ to the set of $4$-regular partitions of $n$ such that  $m(1)$ is at least twice the number of even parts. 

If $\lambda=(\lambda^e, \lambda^o)\in \mathcal P_2(n)$, then all parts of $\lambda^e$ are divisible by $4$. Subtract $2$ from each  part in $\lambda^e$ and  insert $2\ell(\lambda^e)$ parts equal to $1$ to obtain a $4$-regular partition $\mu=(\mu^e, \mu^0)$ with  $m(1)\geq 2\ell(\mu^e)$. 

Conversely, if $\mu=(\mu^e, \mu^o)$ is a $4$-regular partition of $n$ with  $m(1)\geq 2\ell(\mu^e)$,  remove $2\ell(\mu^e)$ parts equal to $1$ and add $2$ to each even part to obtain a partition $\lambda\in \mathcal P_2(n)$.
\end{proof}

 We use the bijection between $\mathcal B_4(n)$ and $\PED(n)$ described above to express $ped(n)-pod(n)$ as the cardinality of a subset of $\PED(n)$. If $\lambda=(\lambda^e, \lambda^o)\in \PED(n)$ is obtained from  $\mu=(\mu^e, \mu^o)\in \mathcal B_4(n)$, then $$\ell(\mu^e)=\sum_{a\in \lambda^e}2^{val_2(a)-1},$$ where $val_2(a)$ is the $2$-adic valuation of $a$, i.e., $val_2(a)=k$ such that $a=2^kc$ with $c$ odd. Moreover, $\lambda^o=\mu^o$. 
Thus, $$ped(n)-pod(n) =\left|\left\{\lambda\in \PED(n) \mid m(1)< \displaystyle \sum_{a\in \lambda^e}2^{val_2(a)}\right\}\right|.$$
For any set of partitions $\mathcal A(n)$, let $$a_{e-o}(n):=|\{\lambda \in \mathcal A(n)\mid \ell(\lambda) \text{ even}\}|-|\{\lambda \in \mathcal A(n)\mid \ell(\lambda) \text{ odd}\}|.$$ 

By a distinct partition we mean a partition in which no part is repeated. By an odd partition we mean a partition in which all parts are odd. 
Recall that, for $k \in \{0,2\}$, we denote by $Q_k(n)$ the number of {distinct} partitions   with no parts congruent to $k\bmod 4$.

 In \cite[Theorem 1.2]{BM21}  it is shown that $$p_{2,e-o}(n)=(-1)^nQ_0(n)$$  and $$b_{4,e-o}(n)=(-1)^nQ_2(n).$$  
 Here, we give a somewhat analogous result. 
 \begin{theorem}\label{ped_e-o} For all $n\geq 0$, $$ped_{e-o}(n)=Q_{2,e-o}(n)$$ and $$pod_{e-o}(n)=Q_{0,e-o}(n).$$
 \end{theorem}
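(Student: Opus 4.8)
The plan is, for each of the two identities, to exhibit a sign-reversing involution (with respect to the sign $(-1)^{\ell}$) on the larger of the two families whose set of fixed points is exactly the smaller family. Since a distinct partition has distinct even parts and distinct odd parts, $\mathcal Q_2(n)\subseteq\PED(n)$ and $\mathcal Q_0(n)\subseteq\POD(n)$, so it suffices to cancel $\PED(n)\setminus\mathcal Q_2(n)$ and $\POD(n)\setminus\mathcal Q_0(n)$ by sign-reversing, fixed-point-free involutions.

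For $ped_{e-o}(n)=Q_{2,e-o}(n)$ I would argue directly. Given $\lambda\in\PED(n)\setminus\mathcal Q_2(n)$, let $c_0$ be the least odd integer with $m(c_0)\ge 2$ or $2c_0\in\lambda$; such a $c_0$ exists because such a $\lambda$ is either non-distinct — and then a repeated part must be odd, the even parts being distinct — or has a part $\equiv 2\bmod 4$, i.e.\ of the form $2c$ with $c$ odd. Define $\iota(\lambda)=(\lambda\setminus\{c_0,c_0\})\cup\{2c_0\}$ if $2c_0\notin\lambda$, and $\iota(\lambda)=(\lambda\setminus\{2c_0\})\cup\{c_0,c_0\}$ if $2c_0\in\lambda$. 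Then $\iota(\lambda)\in\PED(n)\setminus\mathcal Q_2(n)$ and $\ell$ changes by $\pm1$. For $\iota^2=\mathrm{id}$: $\iota$ alters only the multiplicities of $c_0$ and of $2c_0$, and since $c_0$ is odd (so no integer $d$ has $2d=c_0$) and a PED partition contains $2c_0$ at most once (so the second branch truly removes the part $2c_0$, making $2c_0\notin\iota(\lambda)$), the integer $c_0$ remains the least odd witness of ``badness'' for $\iota(\lambda)$; a two-case check then closes it. Hence $ped_{e-o}(n)=\sum_{\lambda\in\mathcal Q_2(n)}(-1)^{\ell(\lambda)}=Q_{2,e-o}(n)$.

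For $pod_{e-o}(n)=Q_{0,e-o}(n)$ the naïve analogue fails: even parts may now repeat, half of an even number may again be even, and the resulting ``carries'' along a $2$-power chain $c,2c,4c,\dots$ resist a self-inverse local rule (e.g.\ the obvious rule sends $(8,2,1)\mapsto(4,4,2,1)$ but then $(4,4,2,1)\mapsto(4,2,2,2,1)$, not back to $(8,2,1)$). Instead, write $\lambda\in\POD$ uniquely as $\lambda^o\sqcup\lambda^e$ with $\lambda^o$ distinct into odd parts and $\lambda^e$ into even parts, so $(-1)^{\ell(\lambda)}=(-1)^{\ell(\lambda^o)}(-1)^{\ell(\lambda^e)}$; apply a Glaisher-type splitting to $\lambda^e$ (for each even $2a$ of multiplicity $m=2q+r$, $r\in\{0,1\}$, keep $r$ copies of $2a$ and turn the other $q$ pairs into $q$ copies of $4a$), a bijection $\lambda^e\leftrightarrow(\mu,\nu)$ with $\mu$ distinct into even parts, $\nu$ arbitrary into parts $\equiv 0\bmod 4$, and $\ell(\lambda^e)\equiv\ell(\mu)\bmod 2$. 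Splitting $\mu=\mu^{(2)}\sqcup\mu^{(0)}$ into its parts $\equiv 2$ and $\equiv 0$ mod $4$, one gets that $pod_{e-o}(n)$ is the coefficient of $q^n$ in
\[
\Big(\sum_{\lambda^o}(-1)^{\ell(\lambda^o)}q^{|\lambda^o|}\Big)\Big(\sum_{\mu^{(2)}}(-1)^{\ell(\mu^{(2)})}q^{|\mu^{(2)}|}\Big)\Big(\sum_{(\mu^{(0)},\nu)}(-1)^{\ell(\mu^{(0)})}q^{|\mu^{(0)}|+|\nu|}\Big),
\]
the sums being over distinct odd $\lambda^o$, over distinct $\mu^{(2)}$ with all parts $\equiv 2\bmod 4$, and over pairs $(\mu^{(0)},\nu)$ of partitions with all parts $\equiv 0\bmod 4$ and $\mu^{(0)}$ distinct. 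Dividing every part by $4$ turns the last factor into $\sum_{(\alpha,\beta)}(-1)^{\ell(\alpha)}q^{4(|\alpha|+|\beta|)}$ over pairs (distinct $\alpha$, arbitrary $\beta$); here the involution that moves the overall largest part $p$ out of $\alpha$ into $\beta$ (or the reverse, when $p\notin\alpha$) is $\ell(\alpha)$-parity-reversing with unique fixed point $(\emptyset,\emptyset)$, so this factor equals $1$. Finally a distinct partition with no part $\equiv 0\bmod 4$ is exactly a (distinct odd)$\sqcup$(distinct, parts $\equiv 2\bmod 4$) partition, so the remaining two factors multiply to $\sum_{\gamma\in\mathcal Q_0(n)}(-1)^{\ell(\gamma)}q^{|\gamma|}$, giving $pod_{e-o}(n)=Q_{0,e-o}(n)$. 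The same three moves — peel off the distinct-parity parts, Glaisher-split the others, and cancel the leftover parts $\equiv 2\bmod 4$ against $(q^2;q^4)_\infty\cdot(q^2;q^4)_\infty^{-1}=1$ — also reprove the first identity (now peeling off the even parts and Glaisher-splitting the odd parts).

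The step I expect to be the real obstacle is recognising that the short direct involution available for PED has no equally short counterpart for POD — the obstruction being precisely the $2$-power-chain structure, invisible in the PED case because a repeated odd part has no ``half'' — and then arranging the POD argument so that everything that genuinely cancels is funnelled into an elementary identity of the form $(q^k;q^k)_\infty\cdot(q^k;q^k)_\infty^{-1}=1$. Checking that $\iota$ is well defined (that it preserves the witness $c_0$) is the other point requiring care, though it is a routine finite case analysis.
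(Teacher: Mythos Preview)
Your proposal is correct and takes a genuinely different route from the paper on both identities.

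For $ped_{e-o}(n)=Q_{2,e-o}(n)$, the paper does not give a direct involution on $\PED(n)\setminus\mathcal Q_2(n)$. Instead it computes each side separately: Andrews' involution $\varphi_{A_1}$ shows $ped_{e-o}(n)$ equals $(-1)^{T_i}$ when $n=T_i$ and $0$ otherwise, and the Kolitsch--Kolitsch involution $\varphi_K$ shows the same for $Q_{2,e-o}(n)$; equality follows. Your involution $\iota$ (merge two copies of the least odd witness $c_0$ into $2c_0$, or split $2c_0$ back) is more elementary and self-contained, and it proves the identity without evaluating either side. Your care about why $c_0$ is unchanged under $\iota$ is exactly what is needed; the argument is complete.

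For $pod_{e-o}(n)=Q_{0,e-o}(n)$, the paper's combinatorial proof (at the end of Section~\ref{inv}) fixes $\lambda^o$ and applies Gupta's involution $\varphi_G$ to $\lambda^e_{/2}$; the survivors are exactly those with $\lambda^e$ distinct and all parts $\equiv 2\bmod 4$, giving $\mathcal Q_0(n)$ as the fixed set. Your approach has the same skeleton---freeze the distinct odd parts, cancel within the even parts---but replaces the appeal to Gupta by an explicit Glaisher-type splitting $\lambda^e\leftrightarrow(\mu,\nu)$ followed by the ``move the largest part between $\alpha$ and $\beta$'' involution to kill the factor coming from parts $\equiv 0\bmod 4$. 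This is a clean alternative; the largest-part swap is essentially a bijective proof of $(q;q)_\infty\cdot 1/(q;q)_\infty=1$, so you are trading one classical involution (Gupta) for another. Your closing remark that the same three-step scheme reproves the first identity is also correct and gives a uniform treatment the paper does not attempt.
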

 \begin{proof}

It is fairly straightforward to see that  

 \begin{align*}\sum_{n\geq 0} ped_{e-o}(n)q^n& =\frac{(q^2;q^2)_\infty}{(-q;q^2)_\infty}=\frac{(q;q^2)_\infty(-q;q^2)_\infty(q^4;q^4)_\infty}{(-q;q^2)_\infty}\\ & =(q;q^2)_\infty(q^4;q^4)_\infty= \sum_{n\geq 0}Q_{2,e-o}(n) q^n.\end{align*}

Similarly, 

\begin{align*}\sum_{n\geq 0} pod_{e-o}(n)q^n& =\frac{(q;q^2)_\infty}{(-q^2;q^2)_\infty}= (q;q^2)_\infty(q^2;q^4)_\infty= \sum_{n\geq 0}Q_{0,e-o}(n) q^n.\end{align*} For the middle equality, we used Euler's identity $\ds\frac{1}{(q;q^2)_\infty}=(-q;q)_\infty$ with $q$ replaced by $q^2$. 
 \end{proof} 
 We will  give combinatorial proofs for the identities in Theorem \ref{ped_e-o} at the end of the next section. 
 
\section{Involutions and bijections for partition identities} \label{inv}

To prove combinatorially the theorems stated in  the introduction, we make use of several involutions and bijections from the literature. In this section, we survey these transformations. The descriptions of most of these mappings are fairly involved and we do not reproduce them here. 
To help the reader keep track of the different involutions and bijections,  we adopt the notation $\varphi_*$ for the transformation due to authors with initials $*$. When defining an involution, there will be an exceptional set of partitions on which the involution is not defined. If the domain of the involution is (most of) the set of partitions $\mathcal A(n)$, we denote the exceptional set by $\mathcal E_{\mathcal A}(n)$.  To prove  identities in which a partition $\lambda$  is counted with weight $(-1)^{\ell(\lambda)}$, the involutions described below reverse the parity of the length of partitions. We refer to them as sign reversing involutions. 

\noindent {\bf 1.}  The Kolitsch-Kolitsch transformation $\varphi_{K}$ for the combinatorial proof of \begin{equation}\label{JTP1}(q^4;q^4)_\infty(q^3;q^4)_\infty(q;q^4)_\infty=\sum_{n=0}^\infty(-1)^{T_n} q^{T_n}.
\end{equation} This is a special case of Jacobi's triple product. In the literature, there are many combinatorial proofs of the general form of the Jacobi triple product. We found the proof by Kolitsch-Kolitsch \cite{KK18} to be most intuitive. 

Recall that $\mathcal Q_2(n)$ is the set of distinct partitions of $n$ with parts not congruent to $2 \bmod 4$. Let $\mathcal{E}_{\mathcal Q_2}(n)$ be the subset of $\mathcal Q_2(n)$ defined by 
$$\mathcal{E}_{\mathcal Q_2}(n)=\begin{cases} \{(4i-1, 4(i-1)-1, \ldots, 7,3)\} & \text{ if $n=2i^2+i$ for some   $i>0$},\\  \{(4(i-1)+1,4(i-2)+1 \ldots, 5,1)\}  & \text{ if $n=2i^2-i$ for some   $i>0$}, \\ \{\emptyset\} & \text{ if } n=0, \\ \emptyset & \text{ otherwise}.\end{cases}$$  Note that above $\{\emptyset\}$ means that the given set contains the empty partition. 

 If $n=2i^2\pm i$ for some nonnegative integer $i$, the partition in  $\mathcal E_{\mathcal Q_2}(n)$ has  length $i$.  Moreover,  $2i^2+i=T_{2i}\equiv i \bmod 2$ and $2i^2-i =T_{2i-1}\equiv i \bmod 2$.

Then, if $n>0$, the transformation $\varphi_{K}$ defined by L. W. Kolitsch and S. Kolitsch in \cite{KK18} (with $r=4$ and $s=1$),  is a sign reversing  involution on $\mathcal Q_2(n)\setminus \mathcal{E}_{\mathcal Q_2}(n)$. 
It proves combinatorially that
$${Q}_{2,e-o}(n)=\begin{cases}(-1)^{T_i} & \text{ if $n=T_i$ for some } i\geq 0,\\ 0 & \text{ otherwise. }  \end{cases}$$ 

\medskip

\noindent {\bf 2.} Franklin's transformation $\varphi_F$ for the combinatorial proof of Euler's pentagonal number theorem. 

Let $a(i):=(3i^2+i)/2$, $i\in \mathbb Z$. We denote by  $\mathcal Q(n)$  the set of distinct partitions of $n$ and  let $\mathcal{E}_{\mathcal Q}(n)$ be the subset of $\mathcal Q(n)$ defined by $$\mathcal{E}_{\mathcal Q}(n)=\begin{cases} \{\pi_i^+:=(2i, 2i-1, \ldots, i+1)\} & \text{ if $n=a(i)$ for some   $i>0$},\\  \{\pi_i^-:=(2i-1, 2i-2, \ldots, i)\}  & \text{ if $n=a(-i)$ for some   $i>0$}, \\ \{\pi_0:=\{\emptyset\} \}& \text{ if } n=a(0)=0, \\ \emptyset & \text{ otherwise}.\end{cases}$$  We refer to $\pi_i^+$ and $\pi_i^-$ as pentagonal partitions and notice that each has $i$ parts.  Then, the transformation $\varphi_F$  defined by Franklin (see for example \cite{A98})  is a sign reversing  involution on $\mathcal Q(n)\setminus \mathcal{E}_{\mathcal Q}(n)$. It proves combinatorially that $$Q_{e-o}(n)=\begin{cases}(-1)^{i} & \text{ if $n=a(i)$ for some } i\in \mathbb Z,\\ 0 & \text{ otherwise. }  \end{cases}$$ 

\medskip

\noindent {\bf 3.}  If $n>0$, the Bressoud-Zeilberger transformation $\varphi_{BZ}$ defined in \cite{BZ85} is an involution on $\displaystyle \bigcup_{i=-\infty}^\infty \mathcal P(n-a(i))$ that reverses the parity of $i$. It proves combinatorially that $$\sum_{i=-\infty}^\infty (-1)^i p(n-a(i))=\begin{cases}0 & \text{ if } n>0, \\ 1 & \text{ if } n=0.\end{cases}$$ 
To ease notation, when $\lambda\vdash n -a(i)$, we write $\varphi_{BZ}(\pi, \lambda)$, with $\pi$ the pentagonal partition of size $a(i)$, instead of $\varphi_{BZ}(\lambda)$.

\medskip

\noindent {\bf 4.} The Ballantine-Merca transformation defined in \cite{BM21a}  is a bijection  $$\varphi_{BM}: \bigcup_{k\geq 0} \mathcal P(n-T_k)\to \mathcal{QQ}(n),$$ where $\mathcal{QQ}(n) = \{(\lambda, \mu)\vdash n \mid \lambda, \mu \in \mathcal Q\}$ is the set of distinct partitions in two colors. 

\medskip 

Next we make note of two sign reversing involutions defined by Andrews in \cite{A72} to prove Gauss' theta identities.

\noindent {\bf 5.} The Andrews transformation $\varphi_{A_1}$ for the combinatorial proof  of Gauss' theta identity \begin{equation*}\label{gauss1} \frac{(q^2;q^2)_\infty}{(-q;q^2)_\infty}=  \sum_{n=0}^\infty(-1)^{T_n} q^{T_n}.\end{equation*} The left hand side of the above identity is the generating function for $ped_{e-o}(n)$.
Let $\mathcal{E}_{\PED}(n)$ be the subset of $\PED(n)$ defined by $$\mathcal{E}_{\PED}(n)=\begin{cases} \{((2i+1)^i)\} & \text{ if $n=2i^2+i$ for some   $i>0$},\\  \{((2i-1)^i)\}  & \text{ if $n=2i^2-i$ for some   $i>0$}, \\ \{\emptyset\} & \text{ if } n=0, \\ \emptyset & \text{ otherwise}.\end{cases}$$ Then the transformation $\varphi_{A_1}$ defined by Andrews in \cite{A72} is a sign reversing  involution on $\PED(n)\setminus \mathcal{E}_{\PED}(n)$. It proves combinatorially that $$ped_{e-o}(n)=\begin{cases}(-1)^{T_i} & \text{ if $n=T_i$ for some } i\geq 0,\\ 0 & \text{ otherwise. }  \end{cases}$$

\noindent {\bf 6.} The Andrews transformation $\varphi_{A_2}$ for the combinatorial proof of Gauss' second theta identity $$\frac{(q;q)_\infty}{(-q;q)_\infty}=\sum_{n=-\infty}^\infty (-1)^nq^{n^2}.$$

Let $\overline{\mathcal P}(n)$ be the set of overpartitions of $n$. Then the left hand side of the above identity is the generating function for $\overline p_{e-o}(n)$. Let $\mathcal{E}_{\overline{\mathcal P}}(n)$ be the subset of $\overline{\mathcal P}(n)$ defined by $$\mathcal{E}_{\overline{\mathcal P}}(n)=\begin{cases} \{(\overline m, m, m \ldots, m), (m, m, m \ldots, m)\} & \text{ if $n=m^2$ for some   $m>0$},\\ \{\emptyset\} & \text{ if } n=0, \\ \emptyset & \text{ otherwise}.\end{cases}$$ Then the transformation $\varphi_{A_2}$ defined by Andrews in \cite{A72} is a sign reversing  involution on $\overline{\mathcal P}(n)\setminus \mathcal{E}_{\overline{\mathcal P}}(n)$. It proves combinatorially that $$\overline p_{e-o}(n)=\begin{cases}2(-1)^{m} & \text{ if $n=m^2$ for some } m\geq 0,\\ 1 & \text{ if $n=0$},
\\ 0 & \text{ otherwise. }  \end{cases}$$

\begin{remark} In fact, the transformation $\varphi_{A_2}$ also reverses the parity of the number of overlined parts in overpartitions. 
\end{remark}

\noindent {\bf 7.} Gupta's transformation $\varphi_G$ for the combinatorial proof of $$\frac{1}{(-q^2;q^2)_\infty(q;q^2)_\infty}=(-q;q^2)_\infty.$$

Let $\mathcal Q_{odd}(n)$ be the set of partitions of $n$ with all parts odd and distinct. Gupta \cite{G75}  defined an involution $\varphi_G$ on $\mathcal P(n)\setminus \mathcal Q_{odd}(n)$ that reverses the parity of the number of even parts.  It shows combinatorially   that $$p_e(n,2)-p_o(n,2)=q_{odd}(n),$$ where $p_{e/o}(n,2)$ is the number of partitions of $n$ with an even (respectively odd) number of even parts. 

Gupta's transformation also shows combinatorially that $$p_e(n)-p_o(n)=(-1)^n q_{odd}(n), $$ where $p_{e/o}(n)$ is the number of partitions of $n$ with an even (respectively odd) number of  parts. 

\medskip

\noindent {\bf 8.} Glaisher's bijection (see for example \cite[Section 2.3]{AE}) $\varphi_{Gl}: \mathcal O(n) \to \mathcal Q(n)$ for the combinatorial proof of Euler's identity $$\frac{1}{(q;q^2)_\infty}=(-q;q)_\infty.$$ 

We already alluded to this transformation in the previous section.

\noindent{\bf 9.}  Finally we prove combinatorially that $$(-q;q)_\infty(q;q)_\infty=(q^2;q^2)_\infty.$$ Let $\mathcal Q_{even}(n)$ be the set of partitions of $n$ with all parts even and distinct.  Recall that $\mathcal{QQ}(n)=\{(\lambda, \mu)\vdash n \mid \lambda, \mu \in \mathcal Q\}$. We have the following combinatorial interpretation of the above identity.

\begin{proposition} \label{zeta} For $n\geq 0$ we have $$|\{(\lambda, \mu)\in \mathcal{QQ}(n) \mid \ell(\lambda) \text{ even}\}|-|\{(\lambda, \mu)\in \mathcal{QQ}(n) \mid \ell(\lambda) \text{ odd}\}|=Q_{even, e-o}(n).$$
\end{proposition}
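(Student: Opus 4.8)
The plan is to prove the generating function identity $(-q;q)_\infty (q;q)_\infty = (q^2;q^2)_\infty$ bijectively, exactly as phrased in the proposition. The left-hand side is the generating function for pairs $(\lambda,\mu) \in \mathcal{QQ}(n)$ weighted by $(-1)^{\ell(\lambda)}$, since $(-q;q)_\infty = \sum_n q(n) q^n$ counts distinct partitions $\mu$ (all positive, contributing $+1$) and $(q;q)_\infty = \sum_n \left(\sum_{\lambda \in \mathcal Q(n)} (-1)^{\ell(\lambda)}\right) q^n$; the right-hand side is $\sum_n Q_{even,e-o}(n) q^n$ since $(q^2;q^2)_\infty = \prod_{k\geq 1}(1-q^{2k})$ is the generating function for distinct partitions into even parts weighted by $(-1)^{\ell}$. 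So the content is to construct a sign-reversing involution on $\mathcal{QQ}(n)$ — with sign $(-1)^{\ell(\lambda)}$ — whose fixed-point set is (in sign-preserving bijection with) the set $\mathcal Q_{even}(n)$ of distinct partitions of $n$ into even parts, with the length parities matching up.

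First I would set up the involution on a pair $(\lambda,\mu)$ where $\lambda,\mu$ are distinct partitions. The natural move: compare the smallest part $s_\lambda$ of $\lambda$ (set it to $+\infty$ if $\lambda=\emptyset$) with the smallest part $s_\mu$ of $\mu$, and try to transfer a part between them to flip $\ell(\lambda)$ by one. Concretely, one classical device (Franklin-style) is: if $\mu$ has a part of size $1$, or more generally if transferring the smallest part of one component into the other keeps both partitions \emph{distinct}, do so; the choice of which transfer to perform is dictated by comparing $s_\lambda$ and $s_\mu$, preferring to move the strictly smaller of the two so that the operation is reversible. A transfer out of $\lambda$ decreases $\ell(\lambda)$ by $1$; a transfer into $\lambda$ increases it by $1$; either way the sign flips, and $n$ is preserved because no part size changes. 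The pairs on which \emph{no} legal transfer exists are the fixed points, and I would check these are exactly the pairs with $\lambda = \emptyset$ and $\mu$ forced to have only even parts (or some analogous clean description), which then biject with $\mathcal Q_{even}(n)$ with $\ell(\mu)$, equivalently $\ell$ of the even partition, tracking the sign $(-1)^{\ell(\lambda)} = (-1)^0 = +1$ on fixed points — wait, that gives the wrong sign structure, so the fixed-point description will instead need $\ell(\lambda)$ to record the length of the even partition. I would therefore more likely identify the even-parts partition with $2\nu$ where $\nu$ is the "diagonal" common to $\lambda$ and $\mu$: i.e. peel off a staircase-type matched piece whose removal makes a transfer possible, and what survives as unmatched is $\lambda = \mu = \nu$ for some partition $\nu$ into distinct parts, contributing a part of size $2\nu_i$ for each $i$ and sign $(-1)^{\ell(\nu)} = (-1)^{\ell(\lambda)}$, matching $Q_{even,e-o}(n)$ exactly.

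Alternatively, and probably more cleanly, I would prove it by composition of maps already in the paper: note $(-q;q)_\infty(q;q)_\infty = \frac{(q;q)_\infty}{(-q;q)_\infty}\cdot(-q;q)_\infty^2$... no — the cleanest route is $(-q;q)_\infty (q;q)_\infty = (q^2;q^2)_\infty$ is literally the statement that $(q;q)_\infty = \frac{(q^2;q^2)_\infty}{(-q;q)_\infty}$, and $\frac{1}{(-q;q)_\infty} = \frac{(q;q)_\infty}{(q^2;q^2)_\infty}$, which circles back. So the honest combinatorial content must be an involution; I would build it directly on $\mathcal{QQ}(n)$ as above. The map should be: given $(\lambda,\mu)$, let $j$ be the largest integer such that $\lambda$ and $\mu$ share the "bottom staircase" $(j, j-1, \dots, 1)$ as their $j$ smallest parts in both — no, since both are distinct partitions their bottoms are already staircase-like only in special cases. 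I will instead compare $s_\lambda$ and $s_\mu$ and move the smaller; ties and the "blocking" configurations (where the moved part would collide with an existing part) are precisely where one falls through to the next comparison, terminating at the fixed points $\lambda=\mu=\nu$ with the surviving $2\nu$ an even distinct partition.

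\textbf{Main obstacle.} The delicate point will be pinning down the fixed-point set exactly and checking the involution is well-defined (an honest involution) at the boundary cases — when one of $\lambda,\mu$ is empty, when $s_\lambda = s_\mu$, and when a transfer is blocked by a collision of part sizes — so that the termination configurations are precisely $\{(\nu,\nu) : \nu \in \mathcal Q\}$ mapping to $2\nu \in \mathcal Q_{even}(n)$ with the length parity $\ell(\lambda)=\ell(\nu)$ preserved. Verifying that every non-fixed pair admits a unique reversible move, with the sign $(-1)^{\ell(\lambda)}$ flipping each time, is the crux; the rest (reading off the two generating functions and the final enumeration) is routine.
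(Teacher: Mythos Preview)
Your plan is correct and lands on the same fixed-point set as the paper: the involution has exceptional set $\{(\nu,\nu):\nu\in\mathcal Q\}$, and $(\nu,\nu)\mapsto 2\nu$ gives the bijection to $\mathcal Q_{even}(n)$ with sign $(-1)^{\ell(\nu)}$. Your ``move the smallest part, recurse on ties'' involution does work, and your worry about collisions is unfounded in this setting: if $s_\lambda<s_\mu$ then $s_\lambda\notin\mu$ automatically, so the only obstruction is the tie $s_\lambda=s_\mu$, which your recursion handles.

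The paper's involution is organized differently and avoids recursion entirely. It splits into two flat cases: if $\ell(\lambda)\not\equiv\ell(\mu)\pmod 2$, simply swap $(\lambda,\mu)\mapsto(\mu,\lambda)$; if $\ell(\lambda)\equiv\ell(\mu)\pmod 2$ and $\lambda\neq\mu$, take the \emph{smallest index} $i$ with $\lambda_i\neq\mu_i$ (so comparing from the largest parts down, with the convention $\lambda_k=0$ for $k>\ell(\lambda)$) and move the larger of $\lambda_i,\mu_i$ to the other partition. Minimality of $i$ forces $\mu_{i-1}=\lambda_{i-1}>\max(\lambda_i,\mu_i)>\min(\lambda_i,\mu_i)$, so the transferred part slots in at position $i$ without collision, and one checks in one line that the move is self-inverse and that the length parities remain equal after the transfer. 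The swap case is what lets the paper sidestep your recursive descent through common parts; conversely, your approach never needs to distinguish the two parity cases. Both routes are short once the right move is named---your ``main obstacle'' dissolves as soon as you commit to either one.
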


The statement of this proposition can also be interpreted as the excess in the number of distinct partitions in two colors, red and blue, with an even number of red parts over the number of distinct partitions in two colors with an odd number of red parts equals the excess in the number of partitions into an even number of distinct even parts over the number of partitions into an odd number of distinct even parts.

\begin{proof} Let $\mathcal E_{\mathcal{QQ}}(n)$ be the subset of $\mathcal{QQ}(n)$ defined by $$\mathcal E_{\mathcal{QQ}}(n)=\{(\lambda, \mu)\in \mathcal{QQ}(n) \mid \lambda= \mu\}.$$ We define a transformation  $\zeta$ on $\mathcal {QQ}(n)\setminus \mathcal E_{\mathcal{QQ}}(n)$ as follows. If $(\lambda, \mu)\in \mathcal{QQ}(n)$ with $\ell(\lambda)\not \equiv \ell(\mu)\bmod 2$, let $\zeta(\lambda,\mu)=\zeta(\mu, \lambda)$. If $\ell(\lambda) \equiv \ell(\mu)\bmod 2$ and $\lambda\neq \mu$,  let $i$ be  the smallest positive integer such that  $\lambda_i\neq \mu_i$. 

(i) If $\lambda_i>\mu_i$, let $\zeta(\lambda, \mu)= (\lambda\setminus (\lambda_i),  \mu\cup(\lambda_i))$. 

(ii) If $\lambda_i<\mu_i$, let $\zeta(\lambda, \mu)=(\lambda \cup (\mu_i),  \mu\setminus(\mu_i))$. 

Then, $\zeta$ is an involution on $\mathcal {QQ}(n)\setminus \mathcal E_{\mathcal{QQ}}(n)$ mapping pairs from case (i) to pairs from case (ii) and vice-versa. Moreover, $\zeta(\lambda, \mu)$ reverses the parity of the length of $\lambda$. Thus, \begin{align*} |\{(\lambda, \mu)& \in \mathcal{QQ}(n) \mid \ell(\lambda) \text{ even}\}|-|\{(\lambda, \mu)\in \mathcal{QQ}(n) \mid \ell(\lambda) \text{ odd}\}|\\ & =|\{(\lambda, \lambda)\in {\mathcal{QQ}}(n) \mid \ell(\lambda) \text{ even}\}|-|\{(\lambda, \lambda)\in {\mathcal{QQ}}(n) \mid \ell(\lambda) \text{ odd}\}|.\end{align*}
Mapping $(\lambda, \lambda)$ to $2\lambda$ completes the proof. 
\end{proof}
 For the rest of the article, whenever we refer to  the zeta transformation, we mean the transformation of Proposition \ref{zeta}. 

\begin{remark}If in the proof above, the zeta transformation is applied to pairs $(\lambda, \mu)$ with $\lambda, \mu \in \mathcal Q_{odd}$, we obtain a combinatorial proof for $$(q;q^2)_\infty(-q;q^2)_\infty=(q^2;q^4)_\infty.$$
\end{remark}

 To simplify explanations,  for the remainder of the article, when we  write that an involution is applied to a given set of partitions, it is implied that the exceptional set has been removed.

We end this section by noting that 
 using the involutions $\varphi_{K}$ and $\varphi_{A_1}$, we obtain a combinatorial proof of the first identity in Theorem \ref{ped_e-o}. 

To prove the second identity in Theorem \ref{ped_e-o}, i.e.,   $pod_{e-o}(n)=Q_{0,e-o}(n)$ for $n\geq 0$,  we  use Gupta's involution $\varphi_G$. First note that, if $\lambda=(\lambda^e, \lambda^o)$, then $\ell(\lambda)\equiv \ell (\lambda^e)+n \bmod 2$. 
Fix a partition $\lambda^o$ with distinct odd parts and size at most $n$. Consider the subset of $\POD(n)$ consisting of partitions whose odd parts are precisely the parts of $\lambda^o$, i.e., $$\POD_{\lambda^o}(n)=\{(\lambda^e, \lambda^o)\vdash n \mid \lambda^e \text{ has even parts}\}.$$ Then, $\lambda^e_{/2}$ is any partition in $\mathcal P((n-|\lambda^o|)/2)$.
The transformation $(\lambda^e,\lambda^o)\to(2\varphi_G(\lambda_{/2}^e), \lambda^o)$ is a sign reversing  involution on  $$\POD_{\lambda^o}(n)\setminus \{(\lambda^e, \lambda^o)\in \POD_{\lambda^o}(n) \mid \lambda^e \text{ distinct parts $\equiv 2 \bmod 4$}\}.$$ 
Observing that $\POD(n)=\bigcup_{\lambda^o\in \mathcal O} \POD_{\lambda^o}(n)$, it follows that $pod_{e-o}(n)=Q_{0,e-o}(n)$.

 \section{Combinatorial proof of Theorem \ref{FGK}}
 
 In this section we give two combinatorial proofs of Theorem \ref{FGK} which states that  for $n\geq 0$, $$\sum_{j=-\infty}^\infty (-1)^{j} ped(n-j(3j+1)/2)=\begin{cases} (-1)^k & \text{ if } n=2k(3k+1) \text{ for some } k \in \mathbb Z\\ 0&  \text{ otherwise. }  \end{cases}$$

\begin{proof}[First proof]

Let $P_{<4}(n)$ be the number of partitions of $n$ with parts occurring at most thrice.  In Section \ref{prelim} we showed combinatorially that $ped(n)=b_4(n)$. A well known generalization of Glaisher's bijection \cite{Gl} shows that $b_4(n)=P_{<4}(n)$. In fact, in \cite{FGK} the theorem is stated in terms of $P_{<4}(n)$.

Let $\mathcal{QP}_{<4} (n)=\{(\lambda, \mu)\vdash n \mid \lambda \in \mathcal P_{<4}, \mu \in \mathcal Q\}$. The transformation   $(\lambda, \mu)\to (\lambda, \varphi_F(\mu))$ on $\mathcal{QP}_{<4}(n)$ shows  that \begin{equation}\label{gfleft} \sum_{j=-\infty}^\infty (-1)^{j} ped(n-j(3j+1)/2)\end{equation} is the generating function for 
\begin{equation}\label{left} |\{(\lambda, \mu)\in \mathcal{QP}_{<4}  (n)\mid \ell(\mu) \text{ even}\}|-|\{(\lambda, \mu)\in \mathcal{QP}_{<4}  (n)\mid \ell(\mu) \text{ odd}\}|.\end{equation}

Let $(\lambda, \mu)\in \mathcal{QP}_{<4} (n)$ and define $\alpha=\lambda\cup \mu$. Then $\alpha$ has parts occurring at most four times. We examine the pairs $(\lambda, \mu)\in \mathcal{QP}_{<4}  (n)$ that result in the same partition $\alpha$. 
Let $\gamma$ be the partition into distinct parts that consists of one copy of each part of $\alpha$ that occurs exactly four times. Then all parts of $\gamma$ appear in $\alpha\setminus \gamma$ exactly three times. Let $\nu$ be the partition into distinct parts that consists of one copy of each part of $\alpha\setminus \gamma$ that is not a part of  $\gamma$.  For each subset $\beta$ of $\nu$, the pair $(\alpha\setminus(\gamma\cup\beta), \gamma\cup\beta)$ is in $\mathcal{QP}_{<4} (n)$ and results in $\alpha$. If $\nu\neq \emptyset$, it is easily seen that the number of subsets of $\nu$ with even cardinality is equal to the number of subsets of $\nu$ with odd cardinality (fix part $a$ of $\nu$ and map a subset $\beta$ of $\nu$ with $a \not \in \beta$ to $\beta\cup(a)$). Thus, if $\nu\neq \emptyset$, the set of pairs $(\alpha\setminus(\gamma\cup\beta), \gamma\cup\beta)$ with $\beta$ a subset of $\nu$ contributes zero to \eqref{left}. If $\nu=\emptyset$, then each part of $\alpha$ has multiplicity $4$. Then there is only one pair in $(\lambda, \mu)\in \mathcal{QP}_{<4}  (n)$ such that $\alpha=\lambda\cup \mu$, namely $(^3\mu, \mu)$, where  $^3\mu$ is the partition whose parts are the parts of $\mu$ each with multiplicity $3$. Thus \eqref{left} equals 
  $$|\{(^3\mu, \mu)\mid \mu \in \mathcal Q, \ell(\mu) \text{ even}\}|-|\{(^3\mu, \mu) \mid  \mu \in \mathcal Q, \ell(\mu) \text{ odd}\}|.$$
Finally, using the transformation $(^3\mu, \mu)\to (^3\varphi_F(\mu), \varphi_F(\mu))$ shows that the only partition that contributes to \eqref{left} is  $(^3\mu, \mu)$ with $\mu$ a pentagonal partition. This completes  the proof.
\end{proof}

\begin{proof}[Second proof] Let $\mathcal{QPED}(n)=\{(\lambda, \mu)\vdash n \mid \lambda \in \PED, \mu \in Q\}.$ The transformation $(\lambda, \mu) \to (\lambda, \varphi_F(\mu))$ shows that \eqref{gfleft} is the generating function for \begin{equation}\label{left1} |\{(\lambda, \mu)\in \mathcal{QPED}  (n)\mid \ell(\mu) \text{ even}\}|-|\{(\lambda, \mu)\in \mathcal{QPED}  (n)\mid \ell(\mu) \text{ odd}\}|.\end{equation}

Let $(\lambda, \mu)=(\lambda^e, \lambda^o, \mu^e, \mu^o)\in \mathcal{QPED}(n)$. The zeta  transformation applied   to $(\lambda^e, \mu^e)$ shows that  \eqref{left1} equals \begin{align*}|\{(\lambda^e, \lambda^o, \lambda^e, \mu^o) & \in \mathcal{QPED}(n) \mid \ell(\lambda^e)+\ell(\mu^o)\text{ even}\}|\\ & -|\{(\lambda^e, \lambda^o, \lambda^e, \mu^o)\in \mathcal{QPED}(n) \mid \ell(\lambda^e)+\ell(\mu^o)\text{ odd}\}|.\end{align*} Let $\eta=\lambda^e\cup \mu^o$,  identify $(\lambda^e, \lambda^o, \lambda^e, \mu^o)$ with $(\lambda^e, \lambda^o, \eta)$, and map $\lambda^o$ to $\nu=\varphi_{Gl}(\lambda^o)\in \mathcal Q$. Then  \eqref{left1} equals \begin{align*} |\{(\lambda^e, \nu,\eta)\vdash n & \mid \lambda^e \in \mathcal Q_{even}, \,  \nu, \eta \in\mathcal Q,  \ell(\eta)\text{ even}\}|\\ & - |\{(\lambda^e, \nu,\eta)\vdash n  \mid \lambda^e \in \mathcal Q_{even}, \,  \nu, \eta \in\mathcal Q,  \ell(\eta)\text{ odd}\}|.\end{align*} 
We apply the zeta  transformation   to $(\nu, \eta)$ and proceed as in Proposition \ref{zeta} to see that  \eqref{left1} equals \begin{align*} |\{(\lambda^e,2\eta)\vdash n & \mid \lambda^e, 2\eta\in \mathcal Q_{even}, \ell(\eta)\text{ even}\}|\\ & -|\{(\lambda^e, 2\eta)\vdash n  \mid \lambda^e, 2 \eta\in \mathcal Q_{even}, \ell(\eta)\text{ odd}\}|.\end{align*} Next, we apply $\zeta$ to $(\lambda^e, 2\eta)$ and proceed as in Proposition \ref{zeta} to see that \eqref{left1} equals \begin{align*}  |\{4\eta\vdash n  \mid \eta \in \mathcal Q,  \ell(\eta)\text{ even}\}| -|\{4\eta\vdash n  \mid  \eta \in \mathcal Q,  \ell(\eta)\text{ odd}\}|.\end{align*} Finally applying the transformation $4\eta \to 4\varphi_F(\eta)$ concludes the proof. 

\end{proof}

\section{Combinatorial proof of Theorem \ref{Merca 1.1}}

The goal of this section is to  prove combinatorially   that for $n\geq 0$, $$\sum_{j\geq 0} (-1)^{T_j} ped(n-T_j)=\begin{cases} 1 &  \text{ if } n=2T_k \text{ for some } k \geq 0,\\ 0 &  \text{ otherwise. }  \end{cases}$$

We begin by  establishing two helpful results.  First, we provide a combinatorial proof for \begin{equation}\label{JTP3} (-q;q)_\infty(-q;q)_\infty(q;q)_\infty=\sum_{n=0}^\infty q^{n(n+1)/2}.\end{equation} To state the combinatorial interpretation of \eqref{JTP3} let $\mathcal D_3(n)$ be the set of distinct partitions in three colors, i.e., $$\mathcal D_3(n)=\{(\lambda, \mu, \eta)\vdash n \mid \lambda, \mu, \eta \in \mathcal Q\}.$$ and define $$D'_{3, e-o}(n):=|\{(\lambda, \mu, \eta) \in \mathcal D_3(n) \mid \ell(\lambda) \text{ even}\}|-|\{(\lambda, \mu, \eta) \in \mathcal D_3(n) \mid \ell(\lambda) \text{ odd}\}|.$$

\begin{proposition} \label{L1} For $n\geq 0$ we have $$ D'_{3, e-o}(n)= \begin{cases}{1} & \text{ if $n=T_k$ for some } k\geq 0,\\ 0 & \text{ otherwise. }  \end{cases}$$
\end{proposition}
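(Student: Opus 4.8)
The identity \eqref{JTP3}, namely $(-q;q)_\infty^2(q;q)_\infty=\sum_{n\ge0}q^{T_n}$, is again a specialization of the Jacobi triple product, so the natural route is to build a sign-reversing involution on $\mathcal D_3(n)$ (triples $(\lambda,\mu,\eta)$ of distinct partitions) that reverses the parity of $\ell(\lambda)$ and whose fixed-point set has exactly one element when $n=T_k$ and is empty otherwise. The plan is to reduce to something already available in the paper rather than constructing a brand-new involution from scratch. First I would use the zeta transformation of Proposition \ref{zeta} (in the form recorded there: the excess over $\mathcal{QQ}(n)$ of even-$\ell(\lambda)$ minus odd-$\ell(\lambda)$ equals $Q_{even,e-o}(n)$) applied to the pair $(\lambda,\mu)$, holding $\eta$ fixed. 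This collapses the generating function interpretation so that $D'_{3,e-o}(n)$ equals the corresponding signed count over pairs $(2\kappa,\eta)\vdash n$ with $\kappa,\eta\in\mathcal Q$, weighted by the parity of $\ell(\kappa)$.

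Next I would absorb the doubled partition $2\kappa$ and $\eta$ together. Writing $\eta=(\eta^e,\eta^o)$, the even part $\eta^e$ together with $2\kappa$ forms a pair of the shape (distinct even, distinct even); apply zeta once more to this pair to replace it by a single $4\rho$ with $\rho\in\mathcal Q$, weighted by the parity of $\ell(\rho)$. At this stage $D'_{3,e-o}(n)$ has been rewritten as a signed count of triples $(4\rho,\eta^o)$—or after Glaisher $\varphi_{Gl}$ is used to turn $\eta^o$ into a distinct partition $\sigma$, of pairs—subject to the remaining distinctness constraints, weighted by $(-1)^{\ell(\rho)}$. I would then need one final involution: either recognize the surviving object directly as an instance of $\varphi_{A_1}$ (Andrews' transformation for $(q^2;q^2)_\infty/(-q;q^2)_\infty=\sum(-1)^{T_n}q^{T_n}$, whose exceptional partitions $((2i\pm1)^i)$ have the right size $2i^2\pm i$ and correct sign $(-1)^{T_i}$), or apply $\varphi_F$/$\zeta$ once more to reduce to pentagonal or triangular exceptional partitions. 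The payoff should be that the only surviving contribution is a single partition of each triangular size $T_k$, contributing $+1$ — note the sign works out because $T_k=2i^2\pm i$ forces $\ell=i$ with $(-1)^{T_i}$, but the squaring inherent in the repeated doubling/zeta steps should cancel the sign, leaving $+1$ as stated. (This is the same phenomenon that converts the signed count in Theorem \ref{ped_e-o} into the unsigned count here.)

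The main obstacle I anticipate is bookkeeping the exceptional sets correctly through the chain of zeta applications and the Glaisher bijection: each application of $\zeta$ removes a diagonal set $\mathcal E_{\mathcal{QQ}}$, and one must check that these removed partitions do not secretly carry nonzero signed weight — i.e., that the exceptional sets at each stage pair up in sign except for the genuine $n=T_k$ survivor. A secondary subtlety is tracking how the parity being measured migrates: we start measuring parity of $\ell(\lambda)$, but after the first zeta step the relevant parity is that of $\ell(\kappa)$ where $\lambda=2\kappa$, and one must verify (as in the computation $\ell(\lambda)\equiv\ell(\lambda^e)+|\lambda|\bmod 2$ used elsewhere in Section \ref{prelim}) that no stray parity contributions from the odd parts of $\eta$ are being dropped. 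If keeping the three colors straight through repeated zeta steps proves too delicate, the fallback is to prove \eqref{JTP3} directly by adapting the Kolitsch–Kolitsch involution $\varphi_K$ to the case of three-colored distinct partitions, whose fixed points are exactly the staircases $(k,k-1,\dots,1)$ in a single color — giving one partition per triangular number $T_k$ with weight $+1$, which is precisely the claim.
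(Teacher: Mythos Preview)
Your route is viable but genuinely different from the paper's. The paper does not use $\zeta$ here at all: it sends $(\lambda,\mu,\eta)\in\mathcal D_3(n)$ to $(\lambda,\varphi_{BM}^{-1}(\mu,\eta))=(\lambda,\gamma,\delta_k)$ with $\gamma\in\mathcal P$ unrestricted and $\delta_k$ the staircase of size $T_k$; then applies Franklin's $\varphi_F$ to $\lambda$ (survivors: $\lambda$ pentagonal), and finally $\varphi_{BZ}$ to the pair $(\pi,\gamma)$ (survivor: both empty). The unique fixed point $(\emptyset,\emptyset,\delta_k)$ gives the $+1$ at $n=T_k$ directly, with no sign to cancel.

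Your two applications of $\zeta$ do correctly reduce $D'_{3,e-o}(n)$ to $\sum(-1)^{\ell(\rho)}$ over pairs $(4\rho,\eta^o)\vdash n$ with $\rho\in\mathcal Q$ and $\eta^o\in\mathcal Q_{odd}$. The finish you sketch is where the proposal wobbles. Glaisher on $\eta^o$ is vacuous (a distinct odd partition is its own image under $\varphi_{Gl}$), and $\varphi_{A_1}$ is not the right closing move: its exceptional partitions $((2i\pm1)^i)$ for $i\geq2$ have a repeated odd part and so lie outside your set, and there is no reason the involution should preserve distinctness of odd parts. The clean close is exactly the endgame of the paper's proof of Theorem~\ref{Merca 1.2}: merge $4\rho\cup\eta^o\in\mathcal Q_2(n)$, use $\ell(\eta^o)\equiv n\pmod 2$ to rewrite $(-1)^{\ell(\rho)}=(-1)^n(-1)^{\ell(4\rho\cup\eta^o)}$, and apply $\varphi_K$; at $n=T_i$ the contribution is $(-1)^{T_i}\cdot(-1)^{T_i}=1$. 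Your worry about ``stray parity contributions from the odd parts of $\eta$'' is precisely this $(-1)^n$ factor, and it is essential rather than a nuisance. The paper's approach buys an explicit staircase survivor and no sign bookkeeping; yours stays within the $\zeta$/$\varphi_K$ toolkit and avoids invoking $\varphi_{BM}$ and $\varphi_{BZ}$.
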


\begin{proof}   Let $\delta_k=(k, k-1, k-2, \ldots, 2,1)$ be the staircase partition of length $k$ and size $T_k$. 
The transformation that maps $(\lambda, \mu, \eta)\in \mathcal D_3(n)$ to  $(\lambda, \varphi_{BM}^{-1}(\mu, \eta), \delta_k)$ is a bijection between $\mathcal D_3(n)$ and  $\mathcal{QP}(n)$, where $$\mathcal{QP}(n)= \{(\lambda, \gamma, \delta_k)\vdash n \mid \lambda \in \mathcal Q, \gamma \in \mathcal P, k\in \mathbb Z_{\geq 0}\}.$$

Denote by $\mathcal E_1(n)$ the following disjoint union of subsets of $\mathcal{QP}(n)$ $$\mathcal E_1(n)=\bigcup_{m,k \geq 0} \{(\pi, \gamma, \delta_k)\vdash m \mid \pi \in \mathcal E_Q,  \gamma \in \mathcal P\}.$$ Note that $\pi$ is a pentagonal partition, $\mathcal{E}_\mathcal{Q}$ is the exceptional set for $\varphi_F$, and $\gamma$ is an unrestricted partition.

 The transformation that maps $(\lambda, \gamma, \delta_k)\in \mathcal{QP}(n)$ to $(\varphi_F(\lambda), \gamma, \delta_k)$ is an involution on $\mathcal{QP}(n)\setminus \mathcal E_1(n)$ that reverses the parity of $\ell(\lambda)$. Thus $$ D'_{3, e-o}(n)= E_{1,e-o}(n), $$
where $ E_{1,e-o}(n)=|\{(\pi, \gamma, \delta_k)\in \mathcal E_1(n) \mid \ell(\pi) \text{ even}\}|-|\{(\pi, \gamma, \delta_k)\in \mathcal E_1(n) \mid \ell(\pi) \text{ odd}\}|$.

\medskip

Let $\mathcal E_2(n)$ be the subset of $\mathcal E_1(n)$ defined by $$ \mathcal E_2(n)=\begin{cases} \{(\emptyset, \emptyset, \delta_k)\}, & \text{ if } n=T_k \text{ for some } k\geq 0, \\ \emptyset & \text{ otherwise.}\end{cases}$$

The transformation that maps $(\pi, \gamma, \delta_k)\in \mathcal E_1(n)$ to $(\varphi_{BZ}(\pi, \gamma), \delta_k)$ is an involution of $\mathcal E_1(n)\setminus \mathcal E_2(n)$ that reverses the parity of $\ell(\pi)$. Hence $$ D'_{3, e-o}(n)= |\mathcal E_2(n)|=\begin{cases} 1 & \text{ if $n=T_k$ for some } k\geq 0,\\ 0 & \text{ otherwise. } \end{cases} $$
\end{proof}

\begin{corollary}\label{CL1} For $n\geq 0$ we have $$ DE'_{3, e-o}(n)= \begin{cases}1 & \text{ if $n=2T_k$ for some } k\geq 0,\\ 0 & \text{ otherwise,}  \end{cases}$$ where \begin{align*}DE'_{3, e-o}(n)=|\{(\lambda, \mu, \nu)\vdash n&  \mid \lambda, \mu, \nu \in \mathcal Q_{even}, \ell(\lambda) \text{ even}\}|\\ & -|\{(\lambda, \mu, \nu)\vdash n \mid \lambda, \mu, \nu \in \mathcal Q_{even}, \ell(\lambda) \text{ odd}\}|.\end{align*}
\end{corollary}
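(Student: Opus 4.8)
The plan is to reduce Corollary \ref{CL1} to Proposition \ref{L1} by a simple scaling argument. The key observation is that $\mathcal{Q}_{even}$ is precisely the image of $\mathcal{Q}$ under the map $\eta \mapsto 2\eta$ that doubles every part: a partition has distinct even parts exactly when it is twice a partition with distinct (arbitrary) parts. Moreover this map preserves length, so it matches the parity of $\ell(\lambda)$ on the nose.

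Concretely, first I would note that a triple $(\lambda,\mu,\nu)$ of partitions in $\mathcal{Q}_{even}$ with total size $n$ can be written uniquely as $(2\lambda', 2\mu', 2\nu')$ with $(\lambda',\mu',\nu') \in \mathcal{D}_3(n/2)$; in particular such triples exist only when $n$ is even, and then $\ell(\lambda)=\ell(\lambda')$. This sets up a bijection between the set counted (with signs) by $DE'_{3,e-o}(n)$ and the set counted (with signs) by $D'_{3,e-o}(n/2)$, which respects the sign $(-1)^{\ell(\lambda)}$. Hence $DE'_{3,e-o}(n) = D'_{3,e-o}(n/2)$ when $n$ is even, and $DE'_{3,e-o}(n)=0$ when $n$ is odd (the defining set is empty).

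Then I would apply Proposition \ref{L1}: $D'_{3,e-o}(n/2)$ equals $1$ if $n/2 = T_k$ for some $k\geq 0$ and $0$ otherwise. Combining, $DE'_{3,e-o}(n)$ equals $1$ precisely when $n$ is even and $n/2=T_k$, i.e.\ when $n=2T_k$ for some $k\geq 0$, and $0$ otherwise. This is exactly the claimed statement.

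Since the argument is essentially a one-line scaling bijection composed with the previous proposition, there is no real obstacle here; the only thing to be careful about is handling the parity of $n$ correctly (the empty triple is allowed, giving the case $n=0=2T_0$) and checking that doubling all parts genuinely carries $\mathcal{Q}$ bijectively onto $\mathcal{Q}_{even}$ with length preserved, which is immediate. I would write this out in a few sentences rather than as a formal chain of lemmas.

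\begin{proof}
The map $\eta \mapsto 2\eta$ sending a partition to the partition obtained by doubling each part is a length-preserving bijection from $\mathcal Q$ onto $\mathcal Q_{even}$. Applying it in each coordinate, the map $(\lambda',\mu',\nu') \mapsto (2\lambda', 2\mu', 2\nu')$ is a bijection from $\mathcal D_3(m)$ onto $\{(\lambda,\mu,\nu)\vdash 2m \mid \lambda,\mu,\nu \in \mathcal Q_{even}\}$ that preserves $\ell(\lambda)$. In particular, if $n$ is odd there are no triples $(\lambda,\mu,\nu)\vdash n$ with all three parts in $\mathcal Q_{even}$, so $DE'_{3,e-o}(n)=0$; and if $n$ is even the bijection gives
$$DE'_{3, e-o}(n)= D'_{3, e-o}(n/2).$$
By Proposition \ref{L1}, the right-hand side equals $1$ if $n/2 = T_k$ for some $k\geq 0$ and $0$ otherwise. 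Therefore $DE'_{3,e-o}(n) = 1$ exactly when $n = 2T_k$ for some $k\geq 0$, and $DE'_{3,e-o}(n)=0$ otherwise.
\end{proof}
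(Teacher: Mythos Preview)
Your argument is correct and is exactly the paper's approach: the paper's proof is the single sentence that the result follows by doubling each part in all partitions involved in the proof of Proposition \ref{L1}, which is precisely your scaling bijection $(\lambda',\mu',\nu')\mapsto(2\lambda',2\mu',2\nu')$ reducing $DE'_{3,e-o}(n)$ to $D'_{3,e-o}(n/2)$.
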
 

\begin{proof} The proof is obtained by  doubling each part in all partitions involved in the proof of Proposition \ref{L1}. \end{proof}  

The next result is analogous to Proposition \ref{zeta}. Its   combinatorial proof uses similar ideas to the zeta transformation but it is much more involved. Recall that $\mathcal O(n)$ is the set of odd partitions of $n$. 
Let $\mathcal{OO}(n)=\{(\lambda, \mu)\vdash n \mid \lambda, \mu \in \mathcal O\}$.

\begin{proposition} \label{prop} For $n\geq 0$ we have \begin{equation}\label{leftprop} |\{(\lambda, \mu)\in \mathcal{OO}(n) \mid \ell(\lambda) \text{ even}\}|-|\{(\lambda, \mu)\in \mathcal{OO}(n) \mid \ell(\lambda) \text{ odd}\}|=Q_{even}(n).\end{equation}
\end{proposition}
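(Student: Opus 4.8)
The plan is to interpret both sides of \eqref{leftprop} via generating functions and then build an explicit sign-reversing involution on $\mathcal{OO}(n)$ whose fixed points are naturally counted by $Q_{even}(n)$. On the generating function level the identity says
$$
\frac{1}{(q;q^2)_\infty^2}\cdot (q;q)_\infty \;\stackrel{?}{=}\; (-q^2;q^2)_\infty,
$$
since $(q;q^2)_\infty^{-1}$ generates odd partitions, the sign $(-1)^{\ell(\lambda)}$ on the first coordinate contributes a factor $(q;q^2)_\infty$ (as in $\varphi_{A_1}$), and $(-q^2;q^2)_\infty$ generates partitions into distinct even parts. So the content is genuinely combinatorial: we must pair up most elements of $\mathcal{OO}(n)$ in sign-reversing fashion, leaving exactly the partitions counted by $Q_{even}(n)$, and that is where the work lies.

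First I would set up the framework mimicking Proposition~\ref{zeta}: given $(\lambda,\mu)\in\mathcal{OO}(n)$, if $\ell(\lambda)\not\equiv\ell(\mu)\bmod 2$ swap the two coordinates; this already cancels everything off the "diagonal parity" locus, so it suffices to treat pairs with $\ell(\lambda)\equiv\ell(\mu)\bmod 2$. The obstacle — and the reason this proposition is "much more involved" than the zeta transformation — is that in $\mathcal{OO}$ parts may repeat, so the naive "move the part at the first disagreement" move is not an involution. The remedy is to first apply Glaisher's bijection $\varphi_{Gl}:\mathcal O\to\mathcal Q$ (carefully tracking how $\ell$ changes parity, exactly as in the combinatorial proof of $ped(n)=b_4(n)$ and in the $\varphi_{A_1}$ set-up) to convert each odd partition into a distinct partition, reducing to a statement about pairs of distinct partitions with a controlled parity weight; then one can run a $\zeta$-type argument on those. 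Concretely I expect the reduction to land on a pair $(\alpha,\beta)$ of distinct partitions where the surviving "diagonal" contributes $2\eta$ for $\eta\in\mathcal Q$, and a further doubling/parity bookkeeping step (analogous to the cascade of $\zeta$ applications in the second proof of Theorem~\ref{FGK}) collapses this to distinct even partitions, i.e.\ $Q_{even}(n)$.

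The key steps, in order, would be: (1) handle the off-parity pairs by swapping coordinates, reducing to $\ell(\lambda)\equiv\ell(\mu)\bmod 2$; (2) apply $\varphi_{Gl}$ to each coordinate, using the identity $\ell(\varphi_{Gl}(\lambda))\equiv |\lambda|\bmod 2$ is \emph{not} what one needs — rather track $\ell$ modulo $2$ through Glaisher directly — to rewrite the signed count in terms of pairs $(\alpha,\beta)\in\mathcal{QQ}$ with an appropriate exponential weight; (3) apply the zeta transformation of Proposition~\ref{zeta} to $(\alpha,\beta)$, leaving only $(\eta,\eta)\mapsto 2\eta$; (4) absorb the residual parity data so that the surviving partitions are exactly distinct even partitions, giving $Q_{even}(n)$; and (5) identify the explicit exceptional set of $\mathcal{OO}(n)$ (the fixed points), which will be a one-element set precisely when $n$ is realized as the size of a distinct-even partition — i.e.\ always, but weighted — and check the signs work out to $Q_{even}(n)$ rather than $\pm Q_{even}(n)$.

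The main obstacle I anticipate is step (2)–(3): Glaisher's map does not interact cleanly with the "first disagreement" rule, so the involution cannot simply be "$\varphi_{Gl}$ then $\zeta$ then $\varphi_{Gl}^{-1}$" — the composition may fail to be an involution because the exceptional set of $\zeta$ does not pull back to anything clean under $\varphi_{Gl}^{-1}$. I would therefore expect the actual argument to define the involution \emph{directly} on $\mathcal{OO}(n)$ by a case analysis (comparing multiplicities of the smallest disagreeing part in $\lambda$ versus $\mu$, and deciding whether to merge-and-move or split-and-move), with the exceptional set being those $(\lambda,\mu)$ for which every such move is blocked — and then recognizing that blocked configurations biject with distinct even partitions. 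Verifying that this hand-built involution is genuinely sign-reversing and well-defined on the complement of the exceptional set is the crux; everything else is bookkeeping.
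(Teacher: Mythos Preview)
Your high-level architecture matches the paper's: first swap coordinates when $\ell(\lambda)\not\equiv\ell(\mu)\bmod 2$, then build a sign-reversing involution directly on the remaining pairs by case analysis, and finally biject the exceptional set with $\mathcal Q_{even}(n)$. But the proposal stops exactly where the work begins, and two of your guesses about the missing piece are off. First, the exceptional set is not ``a one-element set'': in the paper it is $\mathcal E_{\mathcal{OO}}(n)=\{(\emptyset,\mu)\in\mathcal{OO}(n):\text{every part of }\mu\text{ has even multiplicity}\}$, which has exactly $Q_{even}(n)$ elements (halve each multiplicity, apply Glaisher, then double all parts). Second, the moves are not ``merge-and-move or split-and-move'': no parts are ever merged or split. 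The paper's involution moves \emph{blocks of parts} between $\lambda$ and $\mu$ --- the smallest part, or the sub-partition $\lambda^<$ (resp.\ $\mu^<$) of parts smaller than $\mu_s$ (resp.\ $\lambda_s$), or a longer initial segment $\lambda^{<<}$ cut off at the first place where an odd-length run of equal parts ends --- and the case split (four cases $\mathcal A,\mathcal B,\mathcal C,\mathcal D$ on each of the odd--odd and even--even sides) is governed by which of $\lambda_s,\mu_s$ is smaller, the parity of $\ell(\lambda^<)$, and multiplicity data in $\lambda$. That eight-way case analysis is the entire content of the proposition and is genuinely delicate to make involutive; without it you have only the scaffolding.

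A smaller point: your generating-function check is garbled. The weight $(-1)^{\ell(\lambda)}$ on an odd partition gives $\prod_{i\ge0}(1+q^{2i+1})^{-1}=1/(-q;q^2)_\infty$, not a factor of $(q;q)_\infty$; the identity being bijectivised is $\dfrac{1}{(-q;q^2)_\infty(q;q^2)_\infty}=(-q^2;q^2)_\infty$, as the paper states. Your Glaisher-then-$\zeta$ idea fails for a concrete reason you can name: since every part of $\lambda\in\mathcal O$ is odd, $(-1)^{\ell(\lambda)}=(-1)^{|\lambda|}$, so after $\varphi_{Gl}$ the weight on $(\alpha,\beta)\in\mathcal{QQ}(n)$ becomes $(-1)^{|\alpha|}$, and $\zeta$ does not reverse the parity of $|\alpha|$.
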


The statement of this proposition can also be interpreted as the excess in the number of odd partitions in two colors, red and blue, with an even number of red parts over the number of odd partitions in two colors with an odd number of red parts equals the  number of partitions into  distinct even parts. In terms of generating functions, we obtain a combinatorial proof for $$\frac{1}{(-q;q^2)_\infty(q;q^2)_\infty}=(-q^2;q^2)_\infty.$$

\begin{proof}
Let  $\mathcal{E}_{\mathcal{OO}}(n)$ be the  subset of $\mathcal{OO}(n)$ defined by   $$\mathcal{E}_{\mathcal{OO}}(n)=\{(\lambda, \mu)\in \mathcal{OO}(n) \mid \lambda =\emptyset, \text{ parts of $\mu$ have even multiplicity}\}.$$ We will  construct an involution $\psi$ on: $\mathcal{OO}(n) \setminus \mathcal{E}_{\mathcal{OO}}(n)$ that reverses the parity of $\ell(\lambda)$. 

If $(\lambda, \mu)\in \mathcal {OO}(n)$ with $\ell(\lambda) \not \equiv \ell(\mu)\bmod 2$, let $\psi(\lambda, \mu)=(\mu, \lambda)$. 

Next, suppose that $(\lambda, \mu)\in \mathcal {OO}(n)\setminus \mathcal{E}_{\mathcal{OO}}(n)$ with $\ell(\lambda)  \equiv \ell(\mu)\bmod 2$. We define the following subsets of $\mathcal{OO}(n)$: $$\mathcal{O}_{e,e}(n)=\{(\lambda, \mu)\in \mathcal{OO}(n) \mid \ell(\lambda), \ell(\mu)\text{ even}\}$$ and $$\mathcal{O}_{o,o}(n)=\{(\lambda, \mu)\in \mathcal{OO}(n) \mid \ell(\lambda),\ell(\mu) \text{ odd}\}.$$ Thus, $\mathcal E_{\mathcal{OO}}(n)\subseteq  \mathcal{O}_{e,e}(n)$ and $(\lambda, \mu)\in \mathcal{O}_{o,o}(n) \cup\left(\mathcal{O}_{e,e}(n)\setminus \mathcal E_{\mathcal{OO}}(n) \right)$.

Before we define $\psi(\lambda, \mu)$, we introduce some necessary notation.  We denote by $\lambda_s$, respectively $\mu_s$,  the smallest part of $\lambda$, respectively $\mu$. If $\lambda = \emptyset$ (or $\mu = \emptyset$), we define $\lambda_s = \infty$ (or $\mu_s = \infty$).  Let $\mu^<$ be the partition consisting of the parts of $\mu$ that are less than $\lambda_s$. Then $\mu \setminus \mu^<$ is the partition consisting of the parts of $\mu$ larger or equal to $\lambda_s$. 
Similarly, let $\lambda^<$ be the partition consisting of the parts of $\lambda$ that are less than $\mu_s$. 

We write $\mathcal{O}_{o,o}(n) = \mathcal{A}_o \sqcup \mathcal{B}_o \sqcup \mathcal{C}_o \sqcup \mathcal{D}_o$, where 
\begin{align*}
\mathcal{A}_o  = \{ (\lambda, \mu) \in  & \mathcal{O}_{o,o}(n) \ |  \  \lambda_s \geq \mu_s\},\\
\mathcal{B}_o  =  \{ (\lambda, \mu) \in & \mathcal{O}_{o,o}(n) \ |   \  \lambda_s < \mu_s, \ell(\lambda^<) \ \textrm{even}\},\\
\mathcal{C}_o  = \{ (\lambda, \mu) \in & \mathcal{O}_{o,o}(n) \ |   \ \lambda _s < \mu_s, \ell(\lambda^<) \ \textrm{odd},\\
&  \ \textrm{only the largest part of} \ \lambda \ \textrm{has odd multiplicity} \},\\
\mathcal{D}_o  = \{ (\lambda, \mu) \in & \mathcal{O}_{o,o}(n) \ |   \ \lambda _s < \mu_s, \ell(\lambda^<) \ \textrm{odd}, \\
& \ \textrm{some part other than the largest part of} \ \lambda \ \textrm{has odd multiplicity} \}.
 \end{align*}
 
 Similarly, we write  $\mathcal{O}_{e,e}(n)\setminus \mathcal{E}_{\mathcal{OO}}(n)= \mathcal{A}_e \sqcup \mathcal{B}_e \sqcup \mathcal{C}_e \sqcup \mathcal{D}_e$, where
 \begin{align*}
 \mathcal{A}_e & =  \{ (\lambda, \mu) \in \mathcal{O}_{e,e}(n) \ | \ \lambda \neq \emptyset, \lambda_s \leq \mu_s\},\\
\mathcal{B}_e & =  \{ (\lambda, \mu) \in \mathcal{O}_{e,e}(n) \ | \ \lambda \neq \emptyset, \lambda_s > \mu_s, \ell(\mu^<) \ \textrm{even}\}, \\
\mathcal{C}_e & = \{ (\emptyset, \mu) \in \mathcal{O}_{e,e}(n) \ | \ \textrm{not all multiplicities in $\mu$ are even} \},\\
\mathcal{D}_e & =  \{ (\lambda, \mu) \in \mathcal{O}_{e,e}(n) \ | \ \lambda \neq \emptyset, \lambda_s > \mu_s, \ell(\mu^<) \ \textrm{odd}\}.
\end{align*}

 We  now define bijections $\psi_{\mathcal{I}}:\mathcal I_o\to \mathcal I_e$, where $\mathcal{I}\in \{\mathcal{A}, \mathcal{B}, \mathcal{C}, \mathcal{D}\}$. Examples for each mapping are provided following the proof of Theorem \ref{Merca 1.1}.

Define $\psi_{\mathcal{A}}: \mathcal{A}_o \to \mathcal{A}_e$ by $\psi_{\mathcal{A}}(\lambda, \mu) = (\lambda \cup (\mu_s), \mu \setminus (\mu_s))$. Then the inverse, $\psi^{-1}_{\mathcal{A}}: \mathcal{A}_e \to \mathcal{A}_o$, is given by  $\psi^{-1}_{\mathcal{A}}(\lambda, \mu)=(\lambda \setminus (\lambda_s), \mu \cup (\lambda_s))$. 

Define $\psi_{\mathcal{B}}:  \mathcal{B}_o \to \mathcal{B}_e$ by $\psi_{\mathcal{B}}(\lambda, \mu) = (\lambda \setminus \lambda^< \cup (\mu_s), \mu \setminus (\mu_s) \cup \lambda^<)$. Note $\lambda \setminus \lambda^< \neq \emptyset$ because $\ell(\lambda)$ is odd but $\ell(\lambda^<)$ is even. The inverse  $\psi^{-1}_{\mathcal{B}}: \mathcal{B}_e \to \mathcal{B}_o$ is given by  $\psi^{-1}_{\mathcal{B}}(\lambda, \mu)=(\lambda \setminus (\lambda_s) \cup \mu^<, \mu \setminus \mu^< \cup (\lambda_s))$.

If $(\lambda, \mu) \in \mathcal{C}_e \sqcup \mathcal{D}_e$, then $\ell(\mu)$ is even and not all the multiplicities of parts in $\mu$ are even, so there must be at least two odd multiplicities. Let $\mu_i$ be the smallest part of $\mu$ such that the number of parts of $\mu$ smaller than $\mu_i$ is odd. Denote by $\mu^{<<}$ the partition whose parts are the parts of $\mu$ smaller than $\mu_i$. 
 Similarly, if $(\lambda, \mu) \in \mathcal{D}_o$, then there exists some part of $\lambda$ with odd multiplicity that is not the largest part, so we can define $\lambda_i$ and $\lambda^{<<}$. Note that by definition $\lambda^{<<} \neq \lambda$.

Define $\psi_{\mathcal{C}}:  \mathcal{C}_o \to \mathcal{C}_e$ by $\psi_{\mathcal{C}}(\lambda, \mu) = (\emptyset, \mu \cup \lambda)$. Note that only the largest part of $\lambda$ has odd multiplicity, so $\ell(\lambda^<)$ being odd implies that $\lambda^< = \lambda$, which means $\mu_s$ is larger than all the parts of $\lambda$. This also shows that $\mu \cup \lambda$ cannot have all parts with even multiplicity. The inverse $\psi^{-1}_{\mathcal{C}}: \mathcal{C}_e \to \mathcal{C}_o$ is given by  $\psi^{-1}_{\mathcal{C}}(\emptyset, \mu)=(\mu^{<<}, \mu \setminus \mu^{<<})$.  

Define $\psi_{\mathcal{D}}:  \mathcal{D}_o \to \mathcal{D}_e$ by $\psi_{\mathcal{D}}(\lambda, \mu) = (\lambda \setminus \lambda^{<<}, \mu \cup \lambda^{<<})$. Note that $\lambda_i \leq \mu_s$ as $\ell(\lambda^<)$ is odd and $\lambda_i$ is minimal. Recall, $\lambda \setminus \lambda^{<<} \neq \emptyset$ as $\lambda_i \in \lambda \setminus \lambda^{<<}$. The inverse $\psi^{-1}_{\mathcal{D}}: \mathcal{D}_e \to \mathcal{D}_o$ is given by  $\psi^{-1}_{\mathcal{D}}(\lambda, \mu)=(\lambda \cup \mu^{<<}, \mu \setminus \mu^{<<})$. 

Each of these mappings reverses the parity of  $\ell(\lambda)$. We define $\psi$ on $\mathcal{O}_{o,o}(n) \cup\left(\mathcal{O}_{e,e}(n)\setminus \mathcal E_{\mathcal{OO}}(n) \right)$ by the corresponding $\psi_{\mathcal I}$ or $\psi^{-1}_{\mathcal I}$, where $\mathcal{I}\in \{\mathcal{A}, \mathcal{B}, \mathcal{C}, \mathcal{D}\}$.

This shows that that the left hand side of \eqref{leftprop} equals $| \mathcal{E}_{\mathcal{OO}}(n)|$. To finish the proof, we  construct a bijection $\psi_\mathcal{E}: \mathcal{E}_{\mathcal{OO}}(n) \to \mathcal{Q}_{even}(n).$ 

Given $(\emptyset, \mu) \in \mathcal{E}_{\mathcal{OO}}(n)$, let $\widetilde\mu$ be the partition consisting of the different parts of $\mu$ each with half its multiplicity in $\mu$. We use Glaisher's transformation on $\widetilde\mu$ to obtain a partition with distinct parts. Then $2\varphi_{Gl}(\widetilde\mu) \in \mathcal{Q}_{even}(n)$  and  the transformation $(\emptyset, \mu)\to 2\varphi_{Gl}(\widetilde\mu)$ gives the desired bijection. 
\end{proof}

To complete the combinatorial proof of Theorem \ref{Merca 1.1}, we need to show combinatorially that \begin{equation}\label{deped} DE'_{3, e-o}(n)=\sum_{j\geq 0}(-1)^{T_j}ped(n-T_j).\end{equation}
 Denote by $\mathcal{PPED}(n)$ the set $$\mathcal{PPED}(n)=\{(\lambda, \mu)\vdash n  \mid \lambda, \mu \in \PED\}.$$ Using the transformation $\varphi_{A_1}$ on $\lambda$, the right hand side of \eqref{deped} is equal to \begin{align*}|\{(\lambda, \mu)\in \mathcal{PPED}(n)  \mid  \ell(\lambda) \text{ even}\}|-|\{(\lambda, \mu)\in \mathcal{PPED}(n)  \mid  \ell(\lambda) \text{ odd}\}|.\end{align*}
Given $(\lambda, \mu)\in \mathcal{PPED}(n)$, we write  $(\lambda, \mu)=(\lambda^e, \lambda^o, \mu^e, \mu^o)$ as usual.  Fix a partition $\lambda^e$  with distinct even parts and size at most $n$. Consider the subset of $\mathcal{PPED}(n)$ such that the even parts of the first partition are precisely the parts of $\lambda^e$, i.e., $$\mathcal{PPED}_{\lambda^e}(n)=\{(\lambda^e,  \lambda^o, \mu^e, \mu^o)\vdash n \mid \mu^e\in \mathcal Q_{even}, \lambda^o, \mu^o \in \mathcal O\}.$$
Then, applying the involution $\psi$ to $(\lambda^o, \mu^o)$ and proceeding as in the proof of Proposition \ref{prop}, we see that 
\begin{align*} |\{(\lambda^e, \lambda^o,\mu^e,  \mu^o)\in & \mathcal{PPED}_{\lambda^e}(n)\mid \ell(\lambda^o) \text{ even}\}|\\ &  - |\{(\lambda^e, \lambda^o, \mu^e,  \mu^o)\in \mathcal{PPED}_{\lambda^e}(n)\mid \ell(\lambda^o) \text{ odd}\}| \\ & \qquad  \qquad \qquad = |\{(\lambda^e, \mu^e, \gamma) \vdash n \mid \mu^e, \gamma\in \mathcal Q_{even}\}|.\end{align*}
Summing after all partitions $\lambda^e\in \mathcal Q_{even}$, shows that the right hand side of \eqref{deped} equals $DE'_{3, e-o}(n)$.

\medskip

We conclude this section with examples illustrating  the bijections $\psi_{\mathcal{A}}, \psi_{\mathcal{B}}, \psi_{\mathcal{C}},$ and $\psi_{\mathcal{D}}$. Below, marked boxes in one partition of the pair are moved to the other partition. 

\ytableausetup{smalltableaux} 

\begin{example}

Let $\lambda = (3, 1, 1), \mu = (5, 3, 1)$. Then $\mu_s \leq \lambda_s$ and $\psi_{\mathcal{A}}(\lambda, \mu) = (\delta, \gamma),$ where $\delta = (3, 1, 1, 1), \gamma = (5, 3)$. \\

\begin{center}
$(\lambda, \mu) =$ \Bigg( \ydiagram[*(white)]
{3,1,1}\, ,
\ydiagram[*(white)]
{5,3}
*[*(white)\bullet]{5,3,1} \Bigg) $\mapsto $
\Bigg( \ydiagram[*(white)]
{3,1,1}
*[*(white)\bullet]{3,1,1,1}\, ,
 \ydiagram[*(white)]
{5,3} \Bigg) $ = (\delta, \gamma)$
\end{center}

\end{example}

\begin{example}

Let $\lambda = (3, 1, 1), \mu = (5, 3, 3)$. Then $\mu_s > \lambda_s$ and $\ell(\lambda^<) = 2$ is even. Then $\psi_{\mathcal{B}}(\lambda, \mu) = (\delta, \gamma),$ where $\delta = (3, 3), \gamma = (5,3,1, 1)$.\\ 

\begin{center}
$(\lambda, \mu) =$ \Bigg( \ydiagram[*(white)]
{3,}
*[*(white)\bullet]{3,1,1}\, ,
\ydiagram[*(white)]
{5,3}
*[*(white)\bullet]{5,3,3} \Bigg) $\mapsto $
\Bigg( \ydiagram[*(white)]
{3}
*[*(white)\bullet]{3,3}\, ,
 \ydiagram[*(white)]
{5,3}
*[*(white)\bullet]{5,3,1,1} \Bigg) $ = (\delta, \gamma)$
\end{center}

\end{example}

\begin{example}

Let $\lambda = (3, 1, 1), \mu = (7,5,5)$. Then $\mu_s > \lambda_s$, $\ell(\lambda^<) = 3$ is odd (in fact, $\lambda^< = \lambda$), and the only part of $\lambda$ with odd multiplicity is the first part. Then $\psi_{\mathcal{C}}(\lambda, \mu) = (\delta, \gamma),$ where $\delta = \emptyset, \gamma = (7,5, 5,3,1, 1)$.\\

\begin{center}
$(\lambda, \mu) =$ \Bigg( \ydiagram[*(white)\bullet]
{3,1,1}\, ,
\ydiagram[*(white)]
{7,5,5} \Bigg) $\mapsto $
\Bigg( $\emptyset$ ,
 \ydiagram[*(white)]
{7,5,5}
*[*(white)\bullet]{7,5,5,3,1,1} \Bigg) $ = (\delta, \gamma)$
\end{center}

\end{example}

\begin{example}

Let $\lambda = (5, 5, 3, 1, 1), \mu = (7)$. Then $\mu_s > \lambda_s$, $\ell(\lambda^<) = 5$ is odd, and there is a part of $\lambda$ with odd multiplicity that is not the first part. Then $\lambda_i = 5$ and $\psi_{\mathcal{D}}(\lambda, \mu) = (\delta, \gamma),$ where $\delta = (5, 5), \gamma = (7,3,1, 1)$.\\

\begin{center}
$(\lambda, \mu) =$ \Bigg( \ydiagram[*(white)]
{5,5}
*[*(white)\bullet]{5,5,3,1,1}\, ,
\ydiagram[*(white)]
{7} \Bigg) $\mapsto $
\Bigg( \ydiagram[*(white)]
{5,5}\, ,
\ydiagram[*(white)]
{7}
*[*(white)\bullet]{7,3,1,1} \Bigg) $ = (\delta, \gamma)$
\end{center} 
\end{example}

For the remainder of the article, the transformation $\psi$ refers to the transformation of Proposition \ref{prop}.
\section{Combinatorial Proof of Theorem \ref{Merca 1.2}}

In this section we prove combinatorially that for $n\geq 0$, \begin{equation}\label{m2} \sum_{j=-\infty}^\infty (-1)^{j} ped(n-2j^2)=\begin{cases} 1&  \text{ if } n=T_k \text{ for some } k \geq 0,\\ 0&  \text{ otherwise. }  \end{cases}\end{equation}

Doubling parts in each overpartition involved in Andrews proof of  Gauss's second theta identity,  one obtains a combinatorial proof for $$\overline{pe}_{e-o}(n)= \begin{cases}2(-1)^{m} & \text{ if $n=2m^2$ for some } m\geq 0,\\ 1 & \text{ if $n=0$},
\\ 0 & \text{ otherwise, }  \end{cases}$$ where $\overline{pe}(n)$ is the number of overpartitions of $n$ with even parts. 
Thus, if we define $$\overline{\mathcal P} \PED(n)=\{(\overline \lambda, \mu)\vdash n \mid \overline\lambda \in \overline{\mathcal{PE}},  \  \mu \in \PED\},$$ 
to prove \eqref{m2}, we need to show that \begin{equation}\label{leftop} |\{(\overline \lambda, \mu)\in \overline{\mathcal P} \PED(n) \mid \ell(\overline \lambda) \text{ even}\}|-|\{(\overline \lambda, \mu)\in \overline{\mathcal P} \PED(n) \mid \ell(\overline \lambda) \text{ odd}\}|\end{equation} equals $1$ if $n$ is a triangular number and $0$ otherwise. 

As before, we write $\mu=(\mu^e, \mu^o),$ and we also write $\overline \lambda=( \alpha, \beta)$, where $ \alpha$ consists of the overlined parts of $\overline \lambda$ and $\beta$ consists of the nonoverlined parts of $\overline \lambda$. Then, we write $(\overline \lambda, \mu)\in \overline{\mathcal P} \PED(n)$ as $(\alpha, \beta, \mu^e, \mu^o)$. We note that the order on the partitions in the quadruple is important. In the quadruple, the parts of the first partition $\alpha$ are not overlined. However, when we create $(\overline \lambda, \mu)\in \overline{\mathcal P} \PED(n)$ from the quadruple, we overline the parts of $\alpha$. Thus, in $(\alpha, \beta, \mu^e, \mu^o)\in \overline{\mathcal P} \PED(n)$, partitions $\alpha$ and $\mu^e$ have parts even and distinct, $\beta$ is a partition with even parts (possibly repeated), and $\mu^o$ is a partition with odd parts (possibly repeated). 

We apply the zeta transformation to $(\alpha, \mu^e)$ and proceed as in Proposition \ref{zeta} to see that 
\begin{align*}| \{(\alpha, \beta, \mu^e, \mu^o)\in \overline{\mathcal P} \PED(n) & \mid \ell(\alpha)+\ell(\beta) \text{ even}\}|\\ & -|\{(\alpha, \beta, \mu^e, \mu^o)\in \overline{\mathcal P}  \PED(n) \mid \ell(\alpha)+\ell(\beta) \text{ odd}\}|\\   = | \{(2\alpha,  \beta, \mu^o) \vdash n  \mid  \alpha \in & \mathcal Q_{even},  \beta \text{ even parts}, \mu^o \in \mathcal O, \ell(\alpha)+\ell(\beta) \text{ even}\}|\\   -|  \{( 2\alpha,  \beta, \mu^o) \vdash & n   \mid \alpha \in \mathcal Q_{even},  \beta \text{ even parts}, \mu^o \in \mathcal O, \ell(\alpha)+\ell(\beta) \text{ odd}\}|.\end{align*} 
Next, we fix the partition $\alpha\in \mathcal Q_{even}$ and apply 
Gupta's transformation $\varphi_G$ to $\beta\cup \mu^o$ (which is an arbitrary partition of size $n-2|\alpha|$)  to see that 
 \begin{align*} |\{(2\alpha,  \beta, \mu^o) \vdash n &  \mid   \beta \text{ even parts}, \mu^o \in \mathcal O, \ell(\beta) \text{ even}\}|\\  &   -|  \{( 2\alpha,  \beta, \mu^o)  \vdash  n   \mid  \beta \text{ even parts}, \mu^o \in \mathcal O, \ell(\beta) \text{ odd}\}| \\ &   \ \ \ \ \ \ \ \ \ = |\{(2\alpha,  \nu)\vdash n \mid \nu \in\mathcal Q_{odd}\}|.\end{align*} 

Summing over all distinct partitions $\alpha$, we see that  \eqref{leftop} equals \begin{align*} |\{(2\alpha, \nu)\vdash n&  \mid \alpha\in \mathcal Q_{even}, \nu \in \mathcal  Q_{odd}, \ell(\alpha) \text{ even}\}|\\ & -|\{(2\alpha, \nu)\vdash n \mid \alpha\in \mathcal Q_{even}, \nu \in \mathcal Q_{odd}, \ell(\alpha) \text{ odd}\}|\end{align*}
 Since the number of odd parts in a partition of $n$ is congruent to $n \bmod 2$, it follows that  \eqref{leftop} equals $(-1)^n Q_{2,e-o}(n)$.
 
 Finally, the transformation $\varphi_K$  gives a combinatorial proof for $$(-1)^n Q_{2,e-o}(n) = \begin{cases}1 & \text{ if $n=T_i$ for some } i\geq 0,\\ 0 & \text{ otherwise.}  \end{cases}$$
 This concludes the proof.

\section{Combinatorial proof of Theorem \ref{pod-rec}($i$)}

In this section we prove combinatorially that for $n\geq 0$, 
\begin{equation}\label{podq0} Q_0(n) = pod(n)+2 \sum_{k=1}^{\infty} (-1)^{k}\, pod(n-4k^2).\end{equation} Recall that  $Q_0(n)$ is the number of distinct partitions of $n$ with parts $\not \equiv 0\pmod 4$. 

We begin as in the combinatorial proof of Theorem \ref{Merca 1.2}. 
Let $\overline{\mathcal P}_4$ be the set of overpartitions with all parts  $\equiv 0\bmod 4$ and define 
$$\overline{\mathcal P}_4 \POD(n):=\{(\overline \lambda, \mu)\vdash n \mid \overline\lambda \in \overline{\mathcal P}_4,  \  \mu \in \POD \}.$$

We use Andrews' transformation $\varphi_{A_2}$ for the proof of the second Gauss identity and quadruple the parts in all partitions in the proof to see that the right hand side of \eqref{podq0} equals 
 $$|\{(\overline \lambda, \mu)\in\overline{\mathcal P}_4 \POD(n) \mid \ell(\overline \lambda) \text{ even}\}|-|\{(\overline \lambda, \mu)\in \overline{\mathcal P}_4 \POD(n) \mid \ell(\overline \lambda) \text{ odd}\}|.$$ 
 
 As before, we write $\overline \lambda=(\alpha, \beta)$,  where $\alpha$ consists of of the overlined parts of $\overline \lambda$ and $\beta$ consists of of the nonoverlined parts of $\overline \lambda$. We write $\mu=(\mu^{e,2}, \mu^{e,4}, \mu^o)$, where $\mu^{e,2}$, respectively $\mu^{e,4}$, is the partition consisting of the parts of $\mu$ congruent to $2$, respectively $0$,  $\bmod \ 4$, and $\mu^o$ consists of the odd parts of $\mu$. 
 
We denote by $4\mathcal Q$, respectively $2\mathcal Q_{odd},$ the set of all partitions into distinct parts congruent to $0$, respectively $2$,  modulo $4$. We fix $\alpha \in 4\mathcal Q$  and consider the subset of $\overline{\mathcal P}_4 \POD(n)$ with the overlined parts of $\overline\lambda$ exactly the parts of $\alpha$, i.e.  $$\overline{\mathcal P}_4 \POD_\alpha(n) =\left\{(\alpha, \beta, \mu^{e,2}, \mu^{e,4}, \mu^o)\vdash n \, \Big| \begin{array}{l} \beta \text{ parts $\equiv 0 \bmod 4$},\\ \mu^{e,2/4} \text{ parts $\equiv 2/0 \bmod 4$}, \mu^o\in \mathcal Q_{odd} \end{array}\right\}. $$ 
The transformation $(\beta, \mu^{e,2})\to 2\varphi_G((\beta \cup \mu^{e,2})_{/2})$ shows that \begin{align*}|\{  (\alpha, \beta,  \mu^{e,2},   \mu^{e,4},  \mu^o)\in & \overline{\mathcal P}_4 \POD_\alpha(n)  \mid\ell(\beta) \text{ even}\}|\\  &  - |\{ (\alpha,  \beta,  \mu^{e,2},  \mu^{e,4}, \mu^o)\in \overline{\mathcal P}_4 \POD_\alpha(n) \mid\ell(\beta) \text{ odd}\}|\end{align*} \begin{align*} = |\{(\alpha,\gamma,  \mu^{e,4}, \mu^o)\vdash n \mid  \gamma  \in 2\mathcal Q_{odd}, \mu^{e,4} & \text{ parts  $\equiv 0\bmod 4$},  \mu^o\in \mathcal Q_{odd}\}|\\ & =:\overline{\mathcal P}_4 \POD'_\alpha(n). \end{align*}

Summing over all $\alpha$,  shows that the right hand side of \eqref{podq0} equals 
 \begin{align*} |\{(\alpha,\gamma,  \mu^{e,4},  \mu^o)& \in \overline{\mathcal P}_4 \POD'(n) \mid \ell(\alpha) \text{ even}\}|\\ & -|\{(\alpha,\gamma,  \mu^{e,4}, \mu^o)\in \overline{\mathcal P}_4 \POD'(n) \mid \ell(\alpha) \text{ odd}\}|,\end{align*} where $\overline{\mathcal P}_4 \POD'(n) =\bigcup_{\alpha\in 4\mathcal Q}\overline{\mathcal P}_4 \POD'_\alpha(n)$. 
 
 Next, we map $(\alpha,\gamma,  \mu^{e,4}, \mu^o)$ to $(4\varphi_F(\alpha_{/4}),\gamma,  \mu^{e,4}, \mu^o)$ to see that the right hand side of \eqref{podq0} equals 
 \begin{align*} |\{(4\pi,\gamma,  \mu^{e,4},  \mu^o)& \in \overline{\mathcal P}_4 \POD'(n) \mid \ell(\pi) \text{ even}\}|\\ & -|\{(4\pi,\gamma,  \mu^{e,4}, \mu^o)\in \overline{\mathcal P}_4 \POD'(n) \mid \ell(\pi) \text{ odd}\}|,\end{align*} where $\pi$ is a pentagonal partition. 
 Finally, applying $\varphi_{BZ}$ to $(\pi, \mu^{e,4}_{/4})$ we see that the right hand side of \eqref{podq0} equals $$ |\{( \gamma,  \mu^o)\vdash n \mid \gamma \in 2\mathcal Q_{odd},  \mu^o\in \mathcal Q_{odd}\}|=Q_0(n).$$

 \section{Combinatorial proofs  of Theorems \ref{Merca 4.1}, \ref{Merca 4.3}, \ref{Merca 5.1}, and \ref{C 5.4}}

In this section, we provide combinatorial proofs for the remaining theorems in \cite{M17}. 

\subsection{Proof of Theorem \ref{Merca 4.1}}

We prove combinatorially that for $n\geq 0$ we have $$ped(n)=\sum_{k=0}^\infty pod(n-2T_k).$$

\begin{remark}In \cite{M17}, the theorem  is stated with $pod(n-2T_k)$ replaced by $p_2(n-2T_k)$. However, in Section \ref{prelim}, we gave the bijection between $\mathcal{POD}(n)$ and $\mathcal P_2(n)$.
\end{remark}

We create a bijection $h: \PED(n)\to \bigcup_{k=0}^\infty \POD(n-2T_k)$. 

Start with $\lambda=(\lambda^e, \lambda^o)\in \mathcal{PED}(n)$. Let $\eta^o$ be the partition into odd distinct parts whose parts are precisely the parts of $\lambda^o$ that have odd multiplicity. Then each part in $\lambda^o\setminus \eta^o$ is odd and has even multiplicity. We transform $\lambda^o\setminus \eta^o$ into a partition $\mu^e$ with distinct even parts as follows. Let $\nu^o$ be the partition whose parts are the different parts of $\lambda^o\setminus \eta^o$ each occurring with half its multiplicity in $\lambda^o\setminus \eta^o$. Then $\mu^e$ is defined as $2\varphi_{Gl}(\nu^o)$ and clearly $|\lambda^e|+|\eta^o|+|\mu^e|=n$. 
Next we define $\eta^e$ to be the partition $2\varphi^{-1}_{BM}(\lambda^e_{/2}, \mu^e_{/2})$. Thus, $\eta^e$ is a partition into even parts of size $|\lambda^e|+|\mu^e|-2T_k$ for some $k\geq 0$. 
 We define $h(\lambda) =(\eta^e, \eta^o)\in \POD(n-2T_k)$. 
 
 To define the inverse of $h$, start with $(\eta^e, \eta^o)\in \POD(n-2T_k)$ for some $k\geq 0$. Then $2\varphi_{BM}(\eta^e_{/2})$ is a partition of $n-|\eta^o|$ with distinct even parts in two colors.  We write this partition as $(\lambda^e, \alpha)$. Let $\beta$ be the partition obtained by doubling the multiplicity of each part of $\varphi_{Gl}^{-1}(\alpha_{/2})$.  Hence, $\beta$ has odd parts each with even multiplicity and we let $\lambda^o=\eta^o\cup\beta$. Then $h^{-1}(\eta^e, \eta^o)=(\lambda^e, \lambda^o)\in \PED(n)$. 

\begin{remark} It follows from Theorem \ref{Merca 4.1}  that for $n\geq 0$ we have   $$ped(n)-pod(n)=\sum_{k=1}^\infty pod(n-2T_k).$$ This gives another proof that $ped(n)-pod(n) \geq 0$. Moreover, from the discussion in Section \ref{prelim}, it follows that $$\sum_{k=1}^\infty pod(n-2T_k)=\left|\left\{\lambda\in \PED(n) \mid m(1)< \displaystyle \sum_{a\in \lambda^e}2^{val_2(a)}\right\}\right|.$$
\end{remark}

\subsection{Proof of Theorem \ref{Merca 4.3}}

 We prove combinatorially that for $n\geq 0$ we have  \begin{equation}\label{4.3}\sum_{j=0}^\infty (-1)^{T_j}pod(n-T_j)=\begin{cases}1 & \text{ if } n=0,\\ 0 & \text{ if } n>0. \end{cases}\end{equation}

The statement is clearly true if $n=0$ since $\mathcal{POD}(0)=\{\emptyset\}$. Let $n>0$, and define $$\mathcal{Q}_2\mathcal{POD}(n)=\{(\eta, \lambda) \vdash n \mid \eta \in \mathcal Q_2, \lambda \in \mathcal{POD} \}.$$ Using $\varphi_K$, shows that proving \eqref{4.3} is equivalent to showing that \begin{equation}\label{Q2POD}|\{(\eta, \lambda)\in \mathcal{Q}_2\mathcal{POD}(n) \mid \ell(\eta) \text{ even}\}|-|\{(\eta, \lambda)\in \mathcal{Q}_2\mathcal{POD}(n) \mid \ell(\eta) \text{ odd}\}|\end{equation} equals $0$. We write $(\eta, \lambda)\in \mathcal{Q}_2\mathcal{POD}(n)$ as $(\eta^e, \eta^o, \lambda^e\, \lambda^o)$. Thus  $\eta^o,\lambda^o$ have distinct odd parts, $\eta^e$ has distinct  parts $\equiv 0\bmod 4$, and $\lambda^e$ has  even parts. We apply the zeta transformation to $(\eta^o, \lambda^o)$ and  proceed as in Proposition \ref{zeta} to see that  \eqref{Q2POD} is equal to 
\begin{align*}
|\{(\eta^e, 2\eta^o, \lambda^e) \vdash n \mid \eta^o \in \mathcal{Q}_{odd}, \eta^e \in 4\mathcal Q, \lambda^e \ \textrm{even parts}, \ell(\eta^o) + \ell(\eta^e) \ \textrm{even}\}|\\
- |\{(\eta^e, 2\eta^o, \lambda^e) \vdash n \mid \eta^o \in \mathcal{Q}_{odd}, \eta^e \in 4\mathcal{Q}, \lambda^e \ \textrm{even parts}, \ell(\eta^o) + \ell(\eta^e) \ \textrm{odd}\}|.
\end{align*}

We map the triple $(\eta^e, 2\eta^o, \lambda^e)$ to the overpartition $(\alpha, \beta)$ of $n/2$ whose overlined parts are the parts of $\alpha=\eta^o\cup \eta^e_{/2}$ and the non-ovelined parts are the parts of $\beta=\lambda^e_{/2}$.  It is easy to see that this transformation is invertible. Thus,   \eqref{Q2POD} equals
$$|\{(\alpha, \beta) \in \overline{\mathcal{P}}(n/2) \mid \ell(\alpha) \ \textrm{even}\}| - |\{(\alpha, \beta) \in \overline{\mathcal{P}}(n/2) \mid \ell(\alpha) \ \textrm{odd}\}|.$$

It remains to show that if $n$ is even, the number of overpartitions of $n/2$ with an even number of overlined parts is equal to the number of overpartitions of $n/2$ with an odd number of overlined parts. As remarked in Section \ref{inv}, Andrews' involution $\varphi_{A_2}$ on $\overline{\mathcal P}(n)\setminus \mathcal E_{\overline{\mathcal P}(n)}$  reverses the parity of the number of overlined parts. Moreover, the set $ \mathcal{E}_{\overline{\mathcal P}(n)}$ is either empty or consists of two overpartitions: one with no overlined parts and one with one overlined part. This concludes the proof.

\subsection{Proof of Theorem \ref{Merca 5.1}}
 We prove combinatorially that for $n\geq 0$ we have $$ped(n)=\sum_{k\geq 0}\overline p\left(\frac{n}{2}-\frac{T_k}{2}\right).$$  
 
 We will  use  the bijective proof of Fu and Tang  \cite{FT20} for Watson's identity $$|\mathcal{Q}(n)|=\sum_{k\geq 0} p\left(\frac{n-T_k}{2}\right).$$ We denote their bijection by $\varphi_{FT}$. 

Let $n\geq 0$. We create a bijection between $\PED(n)$ and $\bigcup_{k=0}^\infty \overline{\mathcal P}\left(\frac{n}{2}-\frac{T_k}{2}\right)$. 

 Start with $\lambda= (\lambda^e, \lambda^o) \in \PED(n)$. Let $\alpha=\lambda^e_{/2}$ and $\beta=\varphi_{FT}(\varphi_{Gl}(\lambda^o))$. Thus, $\beta \in \mathcal P((|\lambda^o|-T_k)/2)$ for some $k\geq 0$. We map $\lambda$ to the overpartition whose overlined parts are the parts of $\alpha$ and the non-overlined parts are the parts of $\beta$. Then $(\alpha, \beta)\in \overline{\mathcal P}\left(\frac{n}{2}-\frac{T_k}{2}\right)$. The transformation is clearly invertible as we used bijections to define it.

\subsection{Proof of Theorem \ref{C 5.4}}

 We prove combinatorially that for $n\geq 0$ we have \begin{equation}\label{cor_5_4} \sum_{j=0}^\infty o_{e-o}(n-T_j)=\begin{cases} (-1)^k,  & \text{ if } n=2k(3k+1), \  k \in \mathbb Z,\\ 0,& \text{ otherwise. }\end{cases}\end{equation}

  First notice that if $\lambda \in \mathcal{O}(n)$, then $\ell(\lambda)\equiv n \bmod 2$, so $o_{e-o}(n)=(-1)^no(n)$.  Thus, if we define $S(n)=\sum_{j=0}^\infty(-1)^{T_j} o(n-T_j)$, the left hand side of \eqref{cor_5_4} equals $(-1)^nS(n)$. 
  Let $$\mathcal{OPED}(n)=\{(\lambda, \mu)\vdash n \mid \lambda \in \mathcal{PED}, \mu \in \mathcal{O}\}.$$ As in the proof of Theorem \ref{Merca 1.1}, $$S(n)=|\{(\lambda, \mu)\in \mathcal{OPED}(n) \mid \ell(\lambda) \text{ even}\}|-|\{(\lambda, \mu)\in \mathcal{OPED}(n) \mid \ell(\lambda) \text{ odd}\}|.$$ 
  We write $(\lambda, \mu)=(\lambda^e, \lambda^o, \mu)$, and apply the transformation $\psi$  to $(\lambda^o, \mu)$. Then, Proposition \ref{prop} implies \begin{align*} S(n)& = |\{(\lambda^e, \eta)\vdash n \mid \lambda^e, \eta \in \mathcal Q_{even}, \ell(\lambda^e) \text{ even}\}|\\ & \ \ \ \ \ \ -  |\{(\lambda^e, \eta)\vdash n \mid \lambda^e, \eta \in \mathcal Q_{even}, \ell(\lambda^e) \text{ odd}\}|.\end{align*} 
  By Proposition \ref{zeta}, \begin{align*}S(n)& =|\{\nu\in 4\mathcal Q \mid \ell(\nu) \text{ even}\}|-|\{\nu \in 4\mathcal Q \mid  \ell(\nu) \text{ odd}\}|.\end{align*} Finally, mapping $\nu$ to $4\varphi_F(\nu_{/4})$ gives  $$S(n)=\begin{cases} (-1)^k  & \text{ if } n=2k(3k+1), \, k \in \mathbb Z,\\ 0 & \text{ otherwise. }\end{cases}$$ Observing that $S(n)=0$ if $n$ is odd completes the proof.

\section{Beck-Type Identities}

Beck-type identities are companions to partition identities. If the original identity states that the number of partitions of $n$ satisfying condition $X$ equals the number of partitions of $n$ satisfying condition $Y$, then the Beck-type companion  identity  gives a combinatorial interpretation for the excess in the number of parts in all partitions of $n$ subject to $X$ over the number of parts  in all partitions of $n$ subject to $Y$. The first such identity, a companion for Euler's identity,  was conjectured by  Beck \cite{B1} and proved by Andrews \cite{A-Beck}.

In this  section we prove Beck-type companion identities to  the identities given by \eqref{4-reg} and \eqref{pod-mod 4}, i.e., for all $n\geq 0$ we have $$ped(n)=b_4(n)$$ and $$pod(n)=p_2(n).$$ 

Before we state and prove the Beck-type identities, we recall a useful generalization of the original work in \cite{A-Beck}. 

Let $S_1$ and $S_2$ be subsets of the positive integers. Denote by $\mathcal{O}_r(n)$ the set of partitions of $n$ with parts from $S_2$ and by  $\mathcal{D}_r(n)$ the set of partitions of $n$ with parts from $S_1$ and no part repeated $r$ or more times. If $|\mathcal{O}_r(n)| = |\mathcal{D}_r(n)|$ for all $n\geq 0$, the pair $(S_1, S_2)$ is called an Euler pair of order $r$. Subbarao \cite{S71} proved that  $(S_1, S_2)$ is  an Euler pair of order $r$ if and only if $rS_1 \subseteq S_1$ and $S_2 =S_1 \setminus rS_1$.
Ballantine and Welch \cite{BW21} proved the following theorem analytically and combinatorially, which gives a Beck-type identity for Euler pairs.
\begin{theorem} \label{bw} Suppose $(S_1, S_2)$ is  an Euler pair of order $r$ and let $n\geq 0$. 
The excess in the number of parts in all partitions of $\mathcal{O}_r(n)$ over the number of parts in all partitions of $\mathcal{D}_r(n)$ is equal to  the number of partitions with parts from $S_1$ and exactly one part repeated at least $r$ times. The excess also equals   the number of partitions with exactly one part (possibly repeated) from $rS_1$ and all other parts from $S_2$. 
\end{theorem}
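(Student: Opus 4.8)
The plan is to reduce the identity to a single generating function statement and then to indicate the two routes used in \cite{BW21}, one analytic and one bijective.

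First I would mark the number of parts by a variable $z$ and write
$$F(z)=\prod_{s\in S_2}\frac{1}{1-zq^s}=\sum_{n\ge0}\Big(\sum_{\lambda\in\mathcal O_r(n)}z^{\ell(\lambda)}\Big)q^n,\qquad G(z)=\prod_{s\in S_1}\frac{1-z^rq^{rs}}{1-zq^s}=\sum_{n\ge0}\Big(\sum_{\mu\in\mathcal D_r(n)}z^{\ell(\mu)}\Big)q^n.$$
By Subbarao's conditions $rS_1\subseteq S_1$ and $S_2=S_1\setminus rS_1$ one has $S_1=S_2\sqcup rS_1$, whence $F(1)=G(1)=:P=\prod_{s\in S_2}(1-q^s)^{-1}$; this is the generating-function shadow of $|\mathcal O_r(n)|=|\mathcal D_r(n)|$. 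Since the excess in the theorem is the $q^n$-coefficient of $\partial_z F(z)\big|_{z=1}-\partial_z G(z)\big|_{z=1}$, the whole statement reduces to evaluating this difference and recognizing it combinatorially.

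For the analytic part I would differentiate logarithmically,
$$\frac{F'(z)}{F(z)}=\sum_{s\in S_2}\frac{q^s}{1-zq^s},\qquad\frac{G'(z)}{G(z)}=\sum_{s\in S_1}\frac{q^s}{1-zq^s}-r\sum_{s\in S_1}\frac{z^{r-1}q^{rs}}{1-z^rq^{rs}},$$
set $z=1$, multiply by $P=F(1)=G(1)$, and subtract; the splitting $S_1=S_2\sqcup rS_1$ together with the reindexing $\sum_{s\in rS_1}\frac{q^s}{1-q^s}=\sum_{s\in S_1}\frac{q^{rs}}{1-q^{rs}}$ cancels the common terms and leaves a manifestly nonnegative series. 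The remaining — and more delicate — step is to recognize this series, by conditioning on a distinguished part, as the generating function for each of the two families: for partitions with parts from $S_1$ and exactly one part $v$ repeated at least $r$ times one sums over $v\in S_1$ the contribution $\frac{q^{rv}}{1-q^v}\prod_{s\in S_1\setminus\{v\}}\frac{1-q^{rs}}{1-q^s}$, which combines with $\prod_{s\in S_1}\frac{1-q^{rs}}{1-q^s}=P$; for partitions with one part from $rS_1$ (possibly repeated) and all other parts from $S_2$ the generating function $P\sum_{t\in rS_1}\frac{q^t}{1-q^t}$ is immediate. Comparing these with the derivative computation proves the identity and at the same time shows the two families are equinumerous (a direct Glaisher-type bijection between them can also be exhibited).

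For the combinatorial route I would use the marked sets $\mathcal O_r^*(n)=\{(\lambda,j):\lambda\in\mathcal O_r(n),\ j\in\lambda\}$ and $\mathcal D_r^*(n)=\{(\mu,j):\mu\in\mathcal D_r(n),\ j\in\mu\}$, whose cardinalities are precisely the two part-counts in question, and look for an injection between them whose image misses a set in explicit bijection with one of the families. The natural engine is the generalized Glaisher map $\phi\colon\mathcal O_r(n)\to\mathcal D_r(n)$ realizing Subbarao's theorem — repeatedly merge $r$ equal parts $a$ into a single part $ra$ (each merge lowering the length by $r-1$), the inverse splitting a part of $rS_1$ into $r$ equal parts. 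I would push the marked part through $\phi$ and analyze exactly when the marking can be transported and when a merge absorbs it, so that the defect is governed by the occurrences that force a merge — equivalently, by a part of multiplicity $\ge r$ (first description) or a lone part of $rS_1$ (second description). I expect the main obstacle to be precisely this bookkeeping: the length-change of $\phi$ is not uniform, merges can cascade, and one must isolate the correct exceptional set on which the tentative injection fails; once that is handled, the equality of the excess with each of the two counts, and the bijection between the two families, follow.
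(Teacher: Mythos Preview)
The paper does not prove Theorem~\ref{bw}; it is quoted from \cite{BW21} and immediately specialized to $r=2$ in Corollary~\ref{Beck1lem}. There is therefore no proof in the present paper to compare your sketch against. Your outline does match the two tracks the cited source takes (analytic via $z$-marking and logarithmic differentiation; bijective via marked partitions and the generalized Glaisher merge), and the analytic skeleton you wrote is sound.

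One genuine gap, however: if you actually carry your derivative computation through rather than stopping at ``comparing these with the derivative computation proves the identity,'' you obtain
\[
F'(1)-G'(1)\;=\;P\cdot (r-1)\sum_{s\in S_1}\frac{q^{rs}}{1-q^{rs}},
\]
so for general $r$ the excess equals $(r-1)$ times each of the two partition counts you identify, not the counts themselves. A sanity check at $r=3$, $S_1=\mathbb Z_{>0}$, $n=3$ confirms this: $\mathcal O_3(3)=\{(2,1),(1,1,1)\}$ has $5$ parts, $\mathcal D_3(3)=\{(3),(2,1)\}$ has $3$ parts, excess $2$; but each of the two families contains only the single partition $(1,1,1)$, respectively $(3)$. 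The statement as recorded here suppresses the factor $r-1$, which is harmless for this paper since only $r=2$ is ever used downstream, but your proof would not close as written for general $r$. Either carry the factor $(r-1)$ explicitly or restrict to $r=2$ from the outset. The combinatorial half of your sketch is reasonable in spirit; as you yourself flag, the tracking of the mark through cascading merges is where the real work lies, and that part remains to be done.
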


In particular,  if $S_1$ is the set of even positive integers and $S_2$ is the set of positive integers congruent to $2 \bmod 4$, then $(S_1, S_2)$ is an Euler pair of order $2$. Thus, if $\mathcal B_{4, even}(n)$ is the set of 4-regular partitions of $n$ with even parts, then for $n\geq 0$ we have \begin{equation}\label{eq}
|\mathcal{B}_{4, even}(n)| = |\mathcal{Q}_{even}(n)|,
\end{equation} and we obtain the following corollary to Theorem \ref{bw}. 

\begin{corollary}\label{Beck1lem}
Let $n\geq 0$. The excess in the number of parts in all partitions in $\mathcal{B}_{4, even}(n)$ over the number of parts in all partitions in $\mathcal Q_{even}(n)$  is equal to  the number of partitions with even parts and exactly one part repeated. The excess also equals   the number of partitions with exactly one part (possibly repeated) congruent to $0 \bmod 4$ and all other parts congruent to $2 \bmod 4$. 
\end{corollary}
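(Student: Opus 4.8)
The plan is to derive Corollary \ref{Beck1lem} directly from Theorem \ref{bw} by specializing to the Euler pair $(S_1, S_2)$ where $S_1 = \{2, 4, 6, 8, \ldots\}$ is the set of even positive integers and $S_2 = \{2, 6, 10, 14, \ldots\}$ is the set of positive integers congruent to $2 \bmod 4$, with $r = 2$. First I would verify that this is indeed an Euler pair of order $2$ using Subbarao's criterion: we need $2 S_1 \subseteq S_1$, which is clear since doubling an even number gives an even number (in fact a multiple of $4$), and we need $S_2 = S_1 \setminus 2 S_1$, which holds because $2 S_1 = \{4, 8, 12, \ldots\}$ is exactly the set of multiples of $4$, so removing it from the even numbers leaves precisely the integers congruent to $2 \bmod 4$. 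This identification is already asserted in the paragraph preceding the corollary, so it can be invoked without further comment.

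Next I would match up the relevant partition sets. With these choices, $\mathcal{O}_r(n)$ — partitions of $n$ with parts from $S_2$ — is the set of partitions of $n$ into parts congruent to $2 \bmod 4$; one checks this coincides with $\mathcal{Q}_{even}(n)$, the partitions of $n$ into distinct even parts, via the standard Glaisher-type bijection (merging repeated parts congruent to $2 \bmod 4$ until all even parts are distinct, as in the discussion of $\varphi_{Gl}$ and the $pod(n) = p_2(n)$ proof in Section \ref{prelim}); equivalently, this is just equation \eqref{eq}, $|\mathcal{B}_{4,even}(n)| = |\mathcal{Q}_{even}(n)|$, rephrased. Meanwhile $\mathcal{D}_r(n)$ — partitions of $n$ with parts from $S_1$ and no part repeated $2$ or more times, i.e.\ no part repeated at all — is the set of partitions of $n$ into distinct even parts, which is again $\mathcal{Q}_{even}(n)$. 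Wait: this needs care. In fact $\mathcal{O}_r(n)$ should be read as the set whose cardinality equals that of $\mathcal{D}_r(n)$; under the Euler pair, $|\mathcal{O}_r(n)| = |\mathcal{D}_r(n)|$, and here $\mathcal{O}_r(n)$ consists of partitions into parts $\equiv 2 \bmod 4$ (possibly repeated) while $\mathcal{D}_r(n)$ consists of partitions into distinct even parts. The Beck-type excess in Theorem \ref{bw} is then the excess of the total number of parts over all of $\mathcal{O}_r(n)$ minus the total over all of $\mathcal{D}_r(n)$. But the corollary is phrased in terms of $\mathcal{B}_{4,even}(n)$, not $\mathcal{O}_r(n)$; since parts $\equiv 2 \bmod 4$ are in particular even and are $4$-regular, partitions into parts $\equiv 2 \bmod 4$ are exactly the $4$-regular partitions all of whose parts are $\equiv 2 \bmod 4$, and the Glaisher map between these and $\mathcal{B}_{4,even}(n)$ preserves nothing about part-count, so one must instead argue that $\mathcal{O}_r(n) = \mathcal{B}_{4,even}(n)$ on the nose — which is false in general. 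The correct reading is that $\mathcal{O}_r(n)$ with $S_2 = S_1 \setminus rS_1$ is the partitions into parts $\equiv 2\bmod 4$, and $\mathcal{B}_{4,even}(n)$ is a different but equinumerous set; the paper's Corollary must therefore be using the version of Beck's theorem where the excess is taken between $\mathcal{B}_{4,even}$ (partitions into even $4$-regular parts, i.e.\ even parts each occurring at most... no) — so I would present it as: apply Theorem \ref{bw} with $S_1$ the even integers and $S_2$ the integers $\equiv 2 \bmod 4$, note $rS_1 = 4\mathbb{Z}_{>0}$, and read off the two combinatorial interpretations of the excess directly, namely (1) partitions with parts from $S_1$ (even parts) and exactly one part repeated at least $r=2$ times, i.e.\ exactly one repeated part, and (2) partitions with exactly one part (possibly repeated) from $rS_1$ (i.e.\ $\equiv 0 \bmod 4$) and all other parts from $S_2$ (i.e.\ $\equiv 2 \bmod 4$). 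The only remaining point is to identify which set plays the role of $\mathcal{O}_r(n)$ in the corollary's phrasing; since $\mathcal{B}_{4,even}(n)$ is by definition the $4$-regular partitions of $n$ into even parts, and a $4$-regular partition into even parts has all parts $\equiv 2 \bmod 4$, we get $\mathcal{B}_{4,even}(n) = \mathcal{O}_r(n)$ after all, so the identification is immediate.

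I would therefore organize the proof as a short three-line argument: (i) state that $(S_1, S_2) = (\{2k : k \geq 1\}, \{4k-2 : k \geq 1\})$ is an Euler pair of order $2$ with $rS_1 = \{4k : k \geq 1\}$; (ii) observe $\mathcal{O}_2(n) = \mathcal{B}_{4,even}(n)$ and $\mathcal{D}_2(n) = \mathcal{Q}_{even}(n)$ by unwinding the definitions; (iii) apply Theorem \ref{bw} verbatim and translate "parts from $S_1$ with exactly one part repeated at least $2$ times" into "even parts with exactly one part repeated" and "exactly one part from $rS_1$, other parts from $S_2$" into "exactly one part congruent to $0 \bmod 4$, all others congruent to $2 \bmod 4$." The main (and really the only) obstacle is the bookkeeping in step (ii) — making sure the set $\mathcal{B}_{4,even}(n)$ as defined in the paper literally equals the set of partitions of $n$ into parts congruent to $2 \bmod 4$ (a $4$-regular partition with only even parts cannot contain a part $\equiv 0 \bmod 4$, so yes, all its parts are $\equiv 2 \bmod 4$, and conversely), and that $\mathcal{D}_2(n)$, partitions into even parts with no part repeated, is exactly $\mathcal{Q}_{even}(n)$. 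Once those identifications are nailed down, the corollary is an immediate instance of Theorem \ref{bw} and no further work is needed.
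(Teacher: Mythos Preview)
Your proposal is correct and matches the paper's approach exactly: the paper presents Corollary \ref{Beck1lem} with no separate proof, obtaining it as an immediate specialization of Theorem \ref{bw} to the Euler pair $(S_1,S_2)=(\{2k:k\geq 1\},\{4k-2:k\geq 1\})$ of order $2$, precisely as you outline in steps (i)--(iii). Your key identification $\mathcal{B}_{4,even}(n)=\mathcal{O}_2(n)$ (a $4$-regular partition with only even parts has all parts $\equiv 2\bmod 4$) and $\mathcal{D}_2(n)=\mathcal{Q}_{even}(n)$ is exactly what is needed, and the mid-proposal detour worrying about Glaisher maps is unnecessary once you observe, as you eventually do, that the sets coincide on the nose.
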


\subsection{A Beck-type companion identity to $ped(n)=b_4(n)$} Before we state the theorem, we introduce some notation. 
Let $b(n)$ be the excess in the number of parts in all partitions in $\mathcal B_4(n)$  over the number of parts in all partitions in $\PED(n)$. Let $\PED_1(n)$ be the set of partitions of $n$ with  one even part repeated, all other even parts distinct, and odd parts unrestricted. Let $\mathcal B_{4,1}(n)$ be the set of partitions of $n$ with exactly one part (possibly repeated)  $\equiv 0 \bmod 4$ and all other parts $\not \equiv 0 \bmod 4$. 

\begin{theorem}\label{Beck1}
For all $n \geq 0$, $b(n) = |\PED_1(n)| = |\mathcal B_{4,1}(n)|$. 
\end{theorem}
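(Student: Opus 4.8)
The plan is to establish the two equalities $b(n)=|\PED_1(n)|$ and $b(n)=|\mathcal B_{4,1}(n)|$ by relating everything back to the 4-regular model via the bijection between $\mathcal B_4(n)$ and $\PED(n)$ described in Section~\ref{prelim} (Glaisher-type merging of equal even parts), and then feeding the even-part bookkeeping into Corollary~\ref{Beck1lem}. The key observation is that under the $\mathcal B_4 \leftrightarrow \PED$ bijection the odd parts are untouched and only the even parts change: a part $2^kc$ (with $c$ odd, $k\ge 2$) of $\lambda\in\PED$ becomes $2^{k-1}$ copies of $2c$ in $\mu\in\mathcal B_4$. Hence the excess in the number of parts, restricted to each fixed odd-part component $\lambda^o$, is exactly the excess in the number of parts between the corresponding even subpartitions, i.e. between $\mathcal B_{4,even}(m)$ and $\mathcal Q_{even}(m)$ where $m=n-|\lambda^o|$. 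Summing over all odd partitions $\lambda^o$ of size $\le n$ with $m\equiv n-|\lambda^o|$ even, and using that $o(n-m)$ counts the choices of $\lambda^o$, gives
\begin{equation*}
b(n)=\sum_{m\ge 0} o(n-m)\,\big(\text{excess of parts between }\mathcal B_{4,even}(m)\text{ and }\mathcal Q_{even}(m)\big).
\end{equation*}

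\textbf{First equality.} By Corollary~\ref{Beck1lem}, the inner excess equals the number of partitions of $m$ with even parts and exactly one part repeated. Appending an arbitrary odd partition of $n-m$ (there are $o(n-m)$ of these) to such a partition produces exactly a partition of $n$ with one even part repeated, all other even parts distinct, and odd parts unrestricted — that is, an element of $\PED_1(n)$ — and this correspondence is a bijection once we sum over $m$. Thus $b(n)=|\PED_1(n)|$.

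\textbf{Second equality.} Again by Corollary~\ref{Beck1lem}, the inner excess also equals the number of partitions of $m$ with exactly one part ($\equiv 0\bmod 4$, possibly repeated) and all other parts $\equiv 2\bmod 4$. Adjoining an arbitrary odd partition of $n-m$ yields a partition of $n$ with exactly one part $\equiv 0\bmod 4$ (possibly repeated) and all other parts $\not\equiv 0\bmod 4$ (the remaining parts being either $\equiv 2\bmod 4$ or odd) — that is, an element of $\mathcal B_{4,1}(n)$. Conversely, any $\nu\in\mathcal B_{4,1}(n)$ splits uniquely as its odd-part component together with its even-part component, the latter having exactly one part $\equiv 0\bmod 4$ and the rest $\equiv 2\bmod 4$. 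So this is again a bijection after summing over $m$, giving $b(n)=|\mathcal B_{4,1}(n)|$.

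\textbf{Main obstacle.} The delicate point is the first reduction: justifying that the part-count excess decomposes additively over the odd-part component. This requires checking carefully that the $\mathcal B_4\leftrightarrow\PED$ bijection (i) fixes $\lambda^o$ setwise and (ii) that the change in total part count depends only on $\lambda^e$, so that one can legitimately factor the weighted sum $\sum_{\lambda}(-1)^{?}\ell(\lambda)$ — rather, the signed part-count sum — as a product of an odd-part sum ($=o(n-m)$ as a pure count, contributing no sign) and an even-part excess. One must also be attentive to the parity constraint $|\lambda^o|\equiv n\bmod 2$ forced by $\mathcal Q_{even}$ and $\mathcal B_{4,even}$ consisting of partitions of even integers, and confirm the summation index $m$ ranges correctly. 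Once this decomposition is in hand, everything else is a routine application of Corollary~\ref{Beck1lem} plus the trivial bijection "append an odd partition."
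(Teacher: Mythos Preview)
Your proposal is correct and follows essentially the same approach as the paper's combinatorial proof: fix the odd-part component $\lambda^o$, observe that the $\mathcal B_4\leftrightarrow\PED$ bijection acts trivially on $\lambda^o$ and as the Glaisher map on $\lambda^e$, apply Corollary~\ref{Beck1lem} to the even component of size $n-|\lambda^o|$, and then reattach $\lambda^o$ to land in $\PED_1(n)$ or $\mathcal B_{4,1}(n)$. Your ``main obstacle'' paragraph slightly overcomplicates matters (there is no sign involved, only a raw part count, and the parity constraint on $m$ is automatic since the inner excess vanishes for odd $m$), but the argument is sound.
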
 

Using Corollary \ref{Beck1lem} and its proofs in \cite{BW21}, we obtain both  analytic and combinatorial proofs of the theorem.

\begin{proof}[Analytic Proof]
Let $z, q\in \mathbb C$, $|z|, |q|<1$  (so that all series converge absolutely). Let $b_4(m,n)$, respectively $ped(m,n)$, be the number of partitions in $\mathcal B_4(n)$, respectively $\PED(n)$, with $m$ parts. We define
\begin{align*}
F(z, q) & := \sum_{m \geq 0} \sum_{n \geq0} b_4(m,n) z^m q^n,\\
G(z, q) & := \sum_{m \geq 0} \sum_{n \geq0} ped(m,n) z^m q^n.
\end{align*}
We have 
\begin{align*} 
F(z, q) & = \frac{1}{(zq^2; q^4)_\infty (zq; q^2)_\infty},\\
G(z, q) & = \frac{(-zq^2; q^2)_\infty}{(zq; q^2)_\infty}.
\end{align*}
Then,
\begin{align*}
\sum_{n=0}^{\infty}b(n)q^n &= \frac{\partial}{\partial z} \Bigg|_{z=1} (F(z, q) - G(z,q))\\
&= \frac{\partial}{\partial z} \Bigg|_{z=1} \frac{1}{(zq; q^2)_\infty}\Bigg(\frac{1}{(zq^2; q^4)_\infty} - (-zq^2; q^2)_\infty\Bigg)\\
& = \frac{\partial}{\partial z} \Bigg[\frac{1}{(zq; q^2)_\infty}\Bigg]_{z=1} \cdot \Bigg(\frac{1}{(q^2; q^4)_\infty} - (-q^2; q^2)_\infty\Bigg)\\
&+ \frac{1}{(q; q^2)_\infty}  \cdot \frac{\partial}{\partial z} \Bigg[\frac{1}{(zq^2; q^4)_\infty} - (-zq^2; q^2)_\infty \Bigg]_{z=1}\\
& = \frac{1}{(q; q^2)_\infty} \cdot \frac{\partial}{\partial z} \Bigg[\frac{1}{(zq^2; q^4)_\infty} - (-zq^2; q^2)_\infty \Bigg]_{z=1},
\end{align*}

\noindent where we invoked \eqref{eq} in generating functions form:
$$\displaystyle \frac{1}{(q^2; q^4)_\infty} = (-q^2, q^2)_\infty.$$

To complete the proof, we use the analytic proof of Corollary \ref{Beck1lem} which can be expressed as
\begin{align*}
\frac{\partial}{\partial z} \Bigg|_{z=1} \left(\frac{1}{(zq^2; q^4)_\infty} - (-zq^2; q^2)_\infty\right)& =  \frac{1}{(q^2; q^4)_\infty} \sum_{i = 1}^\infty\frac{q^{4i}}{1 - q^{4i}}\\
&=  (-q^2; q^2)_\infty \sum_{i = 1}^\infty\frac{q^{2(2i)}}{1 - q^{2(2i)}}.
\end{align*} 
Since $$
  \sum_{n=0}^{\infty}|\PED_1(n)|q^n = \frac{(-q^2; q^2)_\infty}{(q; q^2)_\infty} \cdot \sum_{i = 1}^\infty \frac{q^{2(2i)}}{1 - q^{2(2i)}}$$ and  
$$ \sum_{n=0}^{\infty}|\mathcal B_{4,1}(n)|q^n = \frac{1}{(q; q^2)_\infty (q^2; q^4)_\infty} \cdot \sum_{i = 1}^\infty \frac{q^{4i}}{1 - q^{4i}},
$$
it follows that 
 \begin{align*}
 \sum_{n=0}^{\infty}b(n)q^n & = \sum_{n=0}^{\infty}|\mathcal B_{4,1}(n)|q^n=\sum_{n=0}^{\infty}|\PED_1(n)|q^n. \end{align*}
\end{proof}

\begin{proof}[Combinatorial Proof]
As explained in Section \ref{prelim},  the transformation that maps $\lambda=(\lambda^e, \lambda^o)\in \mathcal B_4(n)$ to $(2\varphi_{Gl}(\lambda^e_{/2}), \lambda^o)\in \PED(n)$ is a bijection. Moreover, this bijection preserves the number of odd parts of $\lambda$. Thus, by Corollary  \ref{Beck1lem}, for a fixed partition $\lambda^o\in \mathcal O$, the number of parts in all partitions in $\{(\lambda^e, \lambda^o)\vdash n \mid \lambda^e \in \mathcal B_{4,even}\}$ minus the number of parts in all partitions in $\{(\lambda^e, \lambda^o)\vdash n \mid \lambda^e \in \mathcal Q_{even}\}$ equals the the number of partitions of $n-|\lambda^o|$ with even parts and exactly one part repeated. This excess also equals   the number of partitions of $n-|\lambda^o|$ with exactly one part (possibly repeated) congruent to $0 \bmod 4$ and all other parts congruent to $2 \bmod 4$. We map each  partition $\mu$ in the sets described above to $\mu\cup \lambda^o$ to obtain a partition of $n$ in $\PED_1(n)$, respectively $\mathcal B_{4,1}(n)$.  Considering  all $\lambda^o\in \mathcal O$  completes the proof. 
\end{proof}

\subsection{A Beck-type companion identity to $pod(n)=p_2(n)$} 
Let $b'(n)$ be the excess in the number of parts in all partitions in $\mathcal P_2(n)$ over the number of parts in all partitions in $\POD(n)$.  Let $\POD_1(n)$ be the set of partitions of $n$ with  one odd part repeated, all other odd parts distinct, and even parts unrestricted. Let $\mathcal P_{2,1}(n)$ be the set of partitions of $n$ with exactly one part (possibly repeated)  $\equiv 2 \bmod 4$ and all other parts $\not \equiv 2 \bmod 4$.

\begin{theorem}
For all $n \geq 1$, $ b'(n) = |\POD_1(n)|=|\mathcal P_{2,1}(n)|$. 
\end{theorem}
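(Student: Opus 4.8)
The strategy is to mirror the structure of the proof of Theorem~\ref{Beck1} for $ped(n)=b_4(n)$, this time using the Euler pair relevant to $pod(n)=p_2(n)$. Recall from Section~\ref{prelim} that a partition $\lambda=(\lambda^e,\lambda^o)$ lies in $\mathcal P_2(n)$ exactly when it has no parts $\equiv 2\bmod 4$, and that the bijection onto $\POD(n)$ acts only on the odd part, merging $\lfloor m(c)/2\rfloor$ pairs of equal odd parts $c$ into parts $2c$, while leaving the even part (all parts $\equiv 0\bmod 4$) untouched. So, parallel to the treatment of the even part in the $ped$ case, the present argument should fix the subpartition $\lambda^e$ (all parts $\equiv0\bmod 4$) and work Beck-type bookkeeping on the odd part. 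The relevant Euler pair is $(S_1,S_2)$ with $S_1$ the set of odd positive integers and $S_2=S_1\setminus 2S_1=S_1$ --- wait, that degenerates; instead one wants $S_1$ the odd integers together with... \emph{(this is the subtlety, see below)}.

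\textbf{First steps.} First I would identify the correct Euler pair of order $2$ so that Theorem~\ref{bw} applies. The bijection $\POD\to\mathcal P_2$ turns repeated odd parts into parts $\equiv 2\bmod 4$, so on the odd-parts side we are comparing partitions into odd parts with bounded repetition against partitions into odd parts with no bound; but the target partitions acquire parts $\equiv 2\bmod 4$. The clean way is to take $S_1=\{1,2,3,5,6,7,9,10,\ldots\}$, the set of positive integers $\not\equiv 0\bmod 4$ that are odd \emph{after dividing out}, i.e. the set $\{m : val_2(m)\le 1\}$ (odd integers and integers $\equiv 2\bmod 4$); then $2S_1=\{m:1\le val_2(m)\le 2\}$ and $S_2=S_1\setminus 2S_1=$ odd integers. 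Checking $rS_1\subseteq S_1$: $2S_1=\{2,4,6,10,12,14,\dots\}$ --- this contains $4$, which is \emph{not} in $S_1$, so this choice fails. The right move is to restrict to partitions all of whose parts avoid $0\bmod 4$: fix $\lambda^e$ a partition into parts $\equiv 0\bmod 4$, and apply the Euler pair with $S_1=\{$odd positive integers$\}\cup\{$positive integers $\equiv 2\bmod 4\}$, $r=2$; then $2S_1=\{$even integers $\equiv 2\bmod 4\}\cup\{$integers $\equiv 0\bmod 4\}$, which is \emph{not} contained in $S_1$ either. So Theorem~\ref{bw} does not apply verbatim; instead I expect the cleanest route is to set up the generating-function computation directly, exactly paralleling the analytic proof of Theorem~\ref{Beck1}: define $F'(z,q)=\sum p_2(m,n)z^mq^n$ and $G'(z,q)=\sum pod(m,n)z^mq^n$, compute
$$
F'(z,q)=\frac{1}{(zq^2;q^4)_\infty (zq;q^2)_\infty},\qquad
G'(z,q)=\frac{(-zq;q^2)_\infty}{(zq^2;q^2)_\infty}\cdot\frac{1}{\text{(even part)}},
$$
take $\partial/\partial z|_{z=1}$ of $F'-G'$, factor out the common $z$-independent factor, and recognize what remains as a known Beck-type count via the analytic Corollary~\ref{Beck1lem} with roles of $2$-bounded odd parts and parts $\equiv 2\bmod 4$ interchanged appropriately.

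\textbf{Combinatorial proof.} For the combinatorial proof, I would fix the partition $\lambda^e$ into parts $\equiv 0\bmod 4$ (this is preserved by the $\POD\leftrightarrow\mathcal P_2$ bijection), so that the problem reduces to the odd parts. There the bijection is exactly Glaisher-type: partitions into odd parts with parts repeated fewer than twice --- i.e.\ \emph{distinct} odd parts --- versus partitions into odd parts unrestricted, but output can contain parts $\equiv 2\bmod 4$. This is precisely the Euler pair with $S_1=\{$odd integers$\}$, $r=2$ applied not to parts of a partition but noting $2S_1=\{$integers $\equiv 2\bmod 4\}$ and $S_2=S_1$. That is an Euler pair of order $2$: $2S_1=\{2,6,10,\ldots\}\subseteq S_1$? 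No, $2S_1\cap S_1=\emptyset$. The correct statement: with $S_1=\{$odd positive integers$\}$, $\mathcal D_2$ = distinct odd partitions, $\mathcal O_2$ = partitions into parts of $S_2=S_1\setminus 2S_1=S_1$ = all odd partitions --- trivial. So the genuine content is that merging pairs of equal odd parts into parts $\equiv 2\bmod 4$ is a bijection $\POD\to\mathcal P_2$ \emph{after allowing the new parts}; the Beck companion then follows from a direct bijective argument on the odd part: I would mimic Andrews' original Beck proof, tracking the single "extra" repetition. Concretely, given a partition in $\POD_1(n)$ with its unique repeated odd part $c$ (multiplicity $\ge 2$), I would map it by the same merging procedure to a partition in $\mathcal P_{2,1}(n)$ with a single part $\equiv 2\bmod 4$ (possibly repeated), and verify that the number-of-parts excess $b'(n)$ counts exactly these objects. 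Finally summing over all fixed $\lambda^e$ into parts $\equiv 0\bmod 4$ completes the argument, exactly as in the last line of the combinatorial proof of Theorem~\ref{Beck1}.

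\textbf{Main obstacle.} The hard part will be getting the Euler-pair formalism to apply cleanly: as the scratch work above shows, the naive choices of $(S_1,S_2)$ do not satisfy $rS_1\subseteq S_1$, so one cannot simply cite Theorem~\ref{bw}. I expect the resolution is either (a) to fix the $\equiv 0\bmod 4$ part first and then invoke a suitably restricted Euler pair on the complementary odd/$\equiv 2$ structure, or (b) to prove the needed Beck-type lemma (the analogue of Corollary~\ref{Beck1lem} with $2\bmod 4$ in place of $0\bmod 4$) from scratch, both analytically via the $\partial/\partial z$ computation and combinatorially, and then transfer it through the $\POD\leftrightarrow\mathcal P_2$ bijection. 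Once that lemma is in place, the rest is a routine parallel to the $ped$ case.
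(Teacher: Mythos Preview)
You have correctly identified the main obstruction: there is no Euler pair of order~$2$ behind the identity $pod(n)=p_2(n)$, so Theorem~\ref{bw} cannot be invoked. The paper says exactly this before giving its proof. Your analytic plan (differentiate $F'-G'$ at $z=1$ and simplify by hand) is the right one and is what the paper does; note however that your displayed $F'(z,q)$ is wrong: partitions in $\mathcal P_2$ have parts odd or $\equiv 0\bmod 4$, so the even factor must be $1/(zq^4;q^4)_\infty$, not $1/(zq^2;q^4)_\infty$, and $G'(z,q)=(-zq;q^2)_\infty/(zq^2;q^2)_\infty$ with no extra factor. With the correct generating functions the computation goes through, though the algebraic simplification of the difference of logarithmic derivatives down to $\sum_{i\ge 1} q^{4i-2}/(1-q^{4i-2})$ takes a page of manipulation and is not a one-liner.

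Your combinatorial sketch has a genuine gap. After fixing the subpartition of parts $\equiv 0\bmod 4$, the remainder on the $\POD$ side is \emph{not} just ``distinct odd parts'': it is distinct odd parts \emph{together with} unrestricted parts $\equiv 2\bmod 4$. Consequently the problem does not ``reduce to the odd parts'' in any Euler-pair sense, and the proposed map $\POD_1(n)\to\mathcal P_{2,1}(n)$ that merges the repeated odd part is not well defined: a partition in $\POD_1(n)$ may already contain arbitrarily many distinct parts $\equiv 2\bmod 4$ (even parts are unrestricted there), so merging the repeated odd part $c$ into copies of $2c$ can collide with existing parts $\equiv 2\bmod 4$ and in any case produces something with possibly several different residues $\equiv 2\bmod 4$, hence not in $\mathcal P_{2,1}(n)$.

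The paper's combinatorial proof takes a different and cleaner route. From the explicit bijection $\POD(n)\to\mathcal P_2(n)$ one sees that the only change in the number of parts comes from splitting each part $\equiv 2\bmod 4$ into two odd parts; hence $b'(n)$ equals the total number of parts $\equiv 2\bmod 4$ in all partitions in $\POD(n)$. This is counted by the set $\POD^*(n)$ of $\POD$-partitions with a single part $\equiv 2\bmod 4$ marked (any occurrence, not just the first). A direct bijection $\POD^*(n)\to\POD_1(n)$ is then given by: if the marked part $a\equiv 2\bmod 4$ is the $j$-th copy of $a$, split $j$ copies of $a$ into $2j$ copies of $a/2$. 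A parallel marking argument on the $\mathcal P_2$ side yields $b'(n)=|\mathcal P_{2,1}(n)|$. No Euler pair, and no fixing of the $\equiv 0\bmod 4$ subpartition, is needed.
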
 

Unlike the proof of Theorem \ref{Beck1}, here we cannot use the work of \cite{BW21}. There is no Euler pair of order $2$ with $S_1$ being the set of positive odd integers since $2S_1 \nsubseteq S_1$.  

\begin{proof}[Analytic Proof] Let $z, q\in \mathbb C$, $|z|, |q|<1$  (so that all series converge absolutely). Let $p_2(m,n)$, respectively $pod(m,n)$, be the number of partitions in $\mathcal P_2(n)$, respectively $\POD(n)$, with $m$ parts. 
 We define
\begin{align*}
F(z, q) & := \sum_{m \geq 0} \sum_{n \geq0} p_2(m,n) z^m q^n,\\
G(z, q) & := \sum_{m \geq 0} \sum_{n \geq0} pod(m,n) z^m q^n.
\end{align*}
We have
\begin{align*} 
F(z, q) & = \frac{1}{(zq^4; q^4)_\infty (zq; q^2)_\infty},\\
G(z, q) & = \frac{(-zq; q^2)_\infty}{(zq^2; q^2)_\infty}.
\end{align*}
Then,
\begin{align*}
\sum_{n=0}^{\infty}b'(n)q^n = \frac{\partial}{\partial z} \Bigg|_{z=1} (F(z, q) - G(z,q)).
\end{align*}

Using logarithmic differentiation,

\begin{align*}
 \frac{\partial}{\partial z} \Bigg|_{z=1} F(z, q)  & = \frac{1}{(q^4; q^4)_\infty (q; q^2)_\infty} \cdot \sum_{i = 1}^{\infty} \Bigg( \frac{q^{4i}}{1 - q^{4i}} +  \frac{q^{2i-1}}{1 - q^{2i-1}}  \Bigg),
 \end{align*}
 
 and 
 
 \begin{align*}
 \frac{\partial}{\partial z} \Bigg|_{z=1} G(z, q)  = \frac{(-q; q^2)_\infty}{(q^2; q^2)_\infty} \cdot \sum_{i = 1}^{\infty} \Bigg( \frac{q^{2i-1}}{1 + q^{2i-1}} + \frac{q^{2i}}{1 - q^{2i}} \Bigg).
 \end{align*}
 
 Using \eqref{pod-mod 4}, it follows that 
 
 \begin{align*}
 \sum_{n=0}^{\infty}&b(n)q^n  =  \frac{1}{(q^4; q^4)_\infty  (q; q^2)_\infty}\\
 & \cdot \Bigg[\sum_{i = 1}^{\infty} \Bigg(\frac{q^{4i}}{1 - q^{4i}} +  \frac{q^{2i-1}}{1 - q^{2i-1}} \Bigg) - \sum_{i = 1}^{\infty} \Bigg( \frac{q^{2i-1}}{1 + q^{2i-1}} + \frac{q^{2i}}{1 - q^{2i}} \Bigg)\Bigg].
 \end{align*}
 
\medskip 

  \noindent We  will show that
 \begin{align*}
 \sum_{i = 1}^{\infty} \Bigg(\frac{q^{4i}}{1 - q^{4i}} +  \frac{q^{2i-1}}{1 - q^{2i-1}} \Bigg) &- \sum_{i = 1}^{\infty} \Bigg( \frac{q^{2i-1}}{1 + q^{2i-1}} + \frac{q^{2i}}{1 - q^{2i}} \Bigg)\\
 & = \sum_{i = 1}^{\infty} \frac{q^{4i - 2}}{1 - q^{4i - 2}}.
 \end{align*}
  First, note that
\begin{align*}
\sum_{i = 1}^\infty \sum_{m = 1}^\infty q^{m(2i)} &= \sum_{i = 1}^\infty \sum_{\substack{m \geq 1 \\ odd}} q^{m(2i)} + \sum_{i = 1}^\infty \sum_{\substack{m \geq 1 \\ even}} q^{m(2i)}\\
&= \sum_{i = 1}^\infty  \sum_{\substack{m \geq 1 \\ even}}  q^{m(2i-1)} + \sum_{i = 1}^\infty \sum_{m =1}^\infty q^{m(4i)}.
\end{align*}
 Then
 \begin{align*}
& \sum_{i = 1}^{\infty} \Bigg( \frac{q^{4i}}{1 - q^{4i}} + \frac{q^{2i-1}}{1 - q^{2i-1}} \Bigg) - \sum_{i = 1}^{\infty} \Bigg( \frac{q^{2i-1}}{1 + q^{2i-1}} + \frac{q^{2i}}{1 - q^{2i}} \Bigg)\\
& = \sum_{i = 1}^\infty \sum_{m=1}^\infty \Big( q^{m(4i)} + q^{m(2i-1)} \Big) - \Bigg( \sum_{i = 1}^\infty \frac{q^{2i-1}}{1 + q^{2i-1}} + \sum_{i = 1}^\infty \sum_{m = 1}^\infty q^{m(2i)} \Bigg)\\
& = \sum_{i = 1}^\infty  \sum_{\substack{m \geq 1 \\ odd}} q^{m(2i-1)}-  \sum_{i = 1}^\infty \frac{q^{2i-1}}{1 + q^{2i-1}}\\
& = \sum_{i = 1}^\infty  \sum_{\substack{m \geq 1 \\ odd}} q^{m(2i-1)} -  \Bigg( \sum_{i = 1}^\infty  \sum_{\substack{m \geq 1 \\ odd}} q^{m(2i-1)} - \sum_{i = 1}^\infty  \sum_{\substack{m \geq 1 \\ even}} q^{m(2i-1)}\Bigg)\\
& = \sum_{i = 1}^\infty  \sum_{\substack{m \geq 1 \\ even}} q^{m(2i-1)}
 = \sum_{i = 1}^\infty \frac{q^{4i - 2}}{1 - q^{4i - 2}}.
\end{align*}
Since $$ \sum_{n=0}^\infty|\mathcal P_{2,1}(n)|q^n =  \frac{1}{(q^4; q^4)_\infty (q; q^2)_\infty} \cdot \sum_{i = 1}^\infty \frac{q^{4i - 2}}{1 - q^{4i - 2}}$$ and  $$ \sum_{n=0}^{\infty}|\POD_1(n)|q^n  = \frac{(-q; q^2)_\infty}{(q^2; q^2)_\infty}\cdot \sum_{i = 1}^\infty \frac{q^{2(2i - 1)}}{1 - q^{2(2i - 1)}},
$$
it follows that 
 \begin{align*}
 \sum_{n=0}^{\infty}b'(n)q^n & = \sum_{n=0}^\infty|\mathcal P_{2,1}(n)|q^n=\sum_{n=0}^{\infty}|\POD_1(n)|q^n. \end{align*}
 \end{proof}
 
 \begin{proof}[Combinatorial Proof]
Recall that when showing $pod(n) = p_2(n)$, we map $\lambda \in \mathcal{POD}(n)$ to $\mu \in \mathcal{P}_2(n)$ by splitting each part congruent to $2 \bmod 4$ into two equal odd parts and leaving all other parts the same. Thus $b'(n)$ is equal to the number of  parts congruent to $2 \bmod 4$ in all partitions of $\mathcal{POD}(n)$. 

Denote by $\POD^*(n)$ the set of partitions $\POD(n)$ with a single part congruent to $2 \bmod 4$ marked. Note that marked partitions differ from overpartitions since any part congruent to $2 \bmod 4$ may be marked. For example, $(9,7,6^*,6,6,4)$ and  $(9,7,6,6^*,6,4)$ are different marked partitions in $\POD^*(n)$. Thus, $b'(n)=|\POD^*(n)|$. We establish a bijection $f:\POD^*(n)\to \POD_1(n)$. 

Start with $\lambda\in \POD^*(n)$ and suppose $\lambda_i=a^*$ is the marked part. Moreover, suppose $\lambda_{i-j+1}=a$ and $\lambda_{i-j}>a$ (we set $\lambda_0=\infty$). Thus the marked part $\lambda_i$ is the $j^{th}$  part among the parts equal to $a$. We define $f(\lambda)$ to be the partition obtained from $\lambda$ by removing the marking and splitting $j$ of the parts equal to $a$ each  into two equal parts. Since $a\equiv 2\bmod 4$, the new parts are equal and odd. Thus $f(\lambda)\in \POD_1(n)$. 

To show that $f$ is invertible,  start with $\mu\in \POD_1(n)$ with repeated odd part $b$. Let $m(b)\geq 2$ be the multiplicity of $b$ in $\mu$. Then $f^{-1}(\mu)$ is the marked partition obtained from $\mu$ by merging $\lfloor m(b)/2 \rfloor$ pairs of parts equal to $b$ to obtain $\lfloor m(b)/2 \rfloor$ parts equal to $2b$. We mark the $\lfloor m(b)/2 \rfloor^{th}$ part equal to $2b$ in the obtained partition. Thus, $f^{-1}(\mu)\in \POD^*(n)$.

From the bijective proof of $pod(n)=p_2(n)$ we see that 
$b'(n)$ is also equal to the number of odd parts in all partitions in $\mathcal{P}_2(n)$ each counted with (the integer part of) half its multiplicity, i.e. $$b'(n)=\sum_{\lambda \in \mathcal{P}_2(n)}\sum_{c\in \lambda, c \text{ odd}}\Bigl\lfloor \frac{m_{\lambda}(c)}{2}\Bigr\rfloor,$$ wher $m_\lambda(c)$ is the multiplicity of $c$ in $\lambda$. 

Denote by  $\mathcal P^*_2(n)$ the set of partitions in $\mathcal P_2(n)$ with a single odd part marked. Moreover, part  $a$ in $\lambda$ may be marked only if it is  the $j^{th}$ occurrence of $a$ in $\lambda$  with $j$ even. Thus $b'(n)=|\mathcal P^*_2(n)|$. We establish a bijection $g:\mathcal P^*_2(n)\to \mathcal P_{2,1}(n)$. 

Start with $\lambda\in \mathcal P^*_2(n)$ and suppose $\lambda_i=a^*$ is the marked part. Moreover, suppose $\lambda_i$ is the $j^{th}$ occurrence of $a$ in $\lambda$. We define $g(\lambda)$ to be the partition obtained from $\lambda$ by removing the marking and merging $j/2$ pairs of parts equal to $a$ to obtain $j/2$ parts equal to $2a$. Thus $g(\lambda)\in \mathcal P_{2,1}(n)$. 

To show that $g$ is invertible,  start with $\mu\in \mathcal P_{2,1}(n)$ with repeated part $b\equiv 2\bmod 4$. Let $m(b)\geq 2$ be the multiplicity of $b$ in $\mu$. Then $g^{-1}(\mu)$ is the partition obtained from $\mu$ by splitting each part equal to $b$ into two parts equal to $b/2$ and marking the $(2m(b))^{th}$ occurrence of $b/2$ in the obtained partition. Thus  $g^{-1}(\mu)\in  \mathcal P^*_2(n)$.
 \end{proof}

\end{document}